\documentclass[final,leqno]{siamltex}
\usepackage{booktabs} 
\usepackage[noadjust]{cite}
\usepackage{amsmath}
\allowdisplaybreaks[4]
\usepackage{multirow}
\usepackage{graphicx}
\usepackage{amssymb}
\usepackage{mathtools}
\usepackage{mathrsfs}
 
\usepackage{latexsym, bm}
\usepackage{hyperref}
\numberwithin{equation}{section}
\newtheorem{remark}{Remark}[section]

\newcommand{\vertiii}[1]{{\left\vert\kern-0.23ex\left\vert\kern-0.23ex\left\vert #1
		\right\vert\kern-0.23ex\right\vert\kern-0.23ex\right\vert}}
\renewcommand\abstractname{Abstract}

\renewcommand\figurename{Figure}
\renewcommand\tablename{Table}

\normalsize

\renewcommand\refname{References}

\def\no{{\nonumber}}

\def\ba{\mathbf{a}}

 \def\ba{\begin{equation}\begin{aligned}}
 \def\ed{\end{aligned}\end{equation}}

\begin{document}
\renewcommand{\thefootnote}{\fnsymbol{footnote}}
\title{A Second-Order Maximum-Bound-Preserving and Energy-Stable Exponential Time-Differencing Method for Allen--Cahn-Type Gradient Flows}

\author{Wenshuai Hu\thanks{Laboratory of Mathematics and Complex Systems, Ministry of Education and School of Mathematical Sciences,
        Beijing Normal University, Beijing 100875, China.\newline
        \texttt{202431130052@mail.bnu.edu.cn}}
    \and
    Guanghua Ji\thanks{Laboratory of Mathematics and Complex Systems, Ministry of Education and School of Mathematical Sciences,
        Beijing Normal University, Beijing 100875, China.\newline
        \texttt{ghji@bnu.edu.cn, Corresponding author}}
    \and Xiao Li\thanks{Laboratory of Mathematics and Complex Systems, Ministry of Education and
    School of Mathematical Sciences, Beijing Normal University, Beijing 100875, China.\newline
        \texttt{lixiao@bnu.edu.cn}}
}

\maketitle
\begin{abstract}
The energy dissipation law and the maximum bound principle (MBP) are two important physical features of the well-known Allen--Cahn equation.
In this paper, we
develop and analyze novel  second-order linear numerical schemes for a class of Allen--Cahn type gradient flows. Our scheme is based on the generalized scalar auxiliary variable (GSAV) approach and a novel second-order exponential time-differencing Runge--Kutta (ETDRK2) method. The resulting formulation overcomes a longstanding difficulty in combining these two techniques while retaining both the MBP and energy stability.
We prove that the proposed scheme unconditionally preserves both the MBP and energy stability.
In addition, rigorous error analysis is carried out for the proposed scheme, establishing second-order accuracy in both time and space without imposing any coupling condition between the time step $\tau$ and the spatial mesh size $h$.
 We also present some numerical experiments to demonstrate the efficiency of the proposed scheme and its preservation of the theoretical properties.
\end{abstract}

\begin{keywords}
Allen--Cahn equation; generalized scalar auxiliary variable; exponential time
differencing; energy stability; maximum bound principle; fully discrete error
estimate.
\end{keywords}

\pagestyle{myheadings}
\thispagestyle{plain}
\markboth{W. Hu, G. Ji, and X. Li}{GSAV--ETD2 method for the Allen--Cahn equation}
\section{Introduction}
Gradient flows provide a fundamental variational framework for describing
dissipative dynamics in phase-field models, interfacial motion, materials
science, and complex fluids \cite{ShenYang2010,shen2019new}.  In this work we
consider semilinear $L^2$-gradient flows of the form
\begin{equation}\label{1.3a}
   u_t=\mathcal A u+f(u),
    \qquad
    E[u]=\frac12\langle u,-\mathcal A u\rangle
    +\int_\Omega W(u)\,\mathrm d\mathbf x,
    \qquad f=-W',
\end{equation}
\begingroup
\makeatletter
\edef\@currentlabel{\theequation}\label{eq1_9}
\makeatother
\endgroup
where $\mathcal A$ is a self-adjoint, negative semidefinite spatial operator.
We assume that $\mathcal A$ generates a positivity-preserving contraction
semigroup.  The periodic Allen--Cahn equation, for which
$\mathcal A=\varepsilon^2\Delta$, is the representative example analyzed in
detail below.  Under periodic or compatible homogeneous boundary conditions,
the variational structure of \eqref{1.3a} yields
\begin{equation}\label{eq:intro_energy_law}
    \frac{\mathrm d}{\mathrm dt}E[u(t)]
    =-\left\|\frac{\delta E}{\delta u}\right\|_{L^2}^2
    =-\|u_t\|_{L^2}^2\le0.
\end{equation}

In addition to energy dissipation, many semilinear gradient flows possess a
maximum bound principle (MBP).  More precisely, let the reaction term $f:\mathbb R\to\mathbb R$ be
continuously differentiable, and suppose that there exists a constant
$\beta>0$ such that
\begin{equation}\label{eq:intro_mbp_condition}
    f(\beta)\le0,
    \qquad
    f(-\beta)\ge0.
\end{equation}
Under either periodic or homogeneous Neumann boundary conditions, the interval
$[-\beta,\beta]$ is invariant under the evolution. In particular, for the
initial condition
\[
    u(0,\boldsymbol{x})=u_{\rm init}(\boldsymbol{x}),
    \qquad \boldsymbol{x}\in\overline{\Omega},
\]
the maximum bound principle (MBP) states that
\begin{equation}\label{eq:intro_mbp}
    \|u_{\rm init}\|_{L^\infty(\Omega)}\le\beta
    \quad\Longrightarrow\quad
    \|u(t)\|_{L^\infty(\Omega)}\le\beta,
    \qquad t>0.
\end{equation}
That is, if the initial data are pointwise bounded by $\beta$, then the
solution remains in the invariant interval $[-\beta,\beta]$ for all time.
This property holds for a broad class of local and nonlocal Allen--Cahn-type
equations and semilinear parabolic problems
\cite{du2019,du2021,Doan2022LowRI}.  It is important both physically, because
it prevents nonphysical overshoots of the order parameter, and analytically,
because it supplies uniform control of the nonlinear term on a compact
interval.

Constructing a numerical method that simultaneously reproduces
\eqref{eq:intro_energy_law} and \eqref{eq:intro_mbp} is nontrivial.  A large
body of work has been devoted to energy-stable approximations of gradient
flows, including convex-splitting methods
\cite{eyre1998unconditionally,baskaran2013convergence}, stabilized
implicit--explicit methods \cite{xu2006stability,shen2010numerical,feng2013stabilized},
discrete-gradient methods \cite{du1991numerical,furihata2001stable}, invariant
energy quadratization methods \cite{yang2017linearly,xu2019efficient,yang2020convergence},
and scalar auxiliary variable (SAV) methods
\cite{shen2018scalar,shen2018convergence,shen2019new}.  These approaches are
effective for energy dissipation, but energy stability alone does not imply a
discrete MBP.

Stabilized exponential time differencing (ETD) and integrating-factor methods
offer a natural route to bound preservation because they treat the stiff
linear operator through its semigroup and retain a comparison structure for
the nonlinear part.  First- and higher-order MBP-preserving schemes have been
developed for scalar, nonlocal, and matrix-valued Allen--Cahn equations
\cite{du2019,Li2021,Li2023,liu2025maximum}.  On the other hand, the SAV and
generalized SAV (GSAV) approaches provide an efficient way to construct
linear schemes with unconditional modified-energy stability.  Ju et al.
\cite{ju2022generalized} combined GSAV with exponential integration for
Allen--Cahn-type gradient flows.  However, the available analysis of the
original GSAV--EI formulation relies on restrictive control of the time step
and the auxiliary factor, which complicates a fully discrete error analysis.
Moreover, the simultaneous preservation of the modified energy law and the
MBP at second order requires additional structural care.

To address these issues, we develop a stabilized second-order GSAV exponential
time-differencing (GSAV--ETD2) method for gradient flows
satisfying \eqref{eq:intro_mbp_condition}.  The key step is an operator
reformulation of the ETD predictor and corrector.  This representation retains
the semigroup structure needed for the MBP while providing a backward
Euler-type identity that is suitable for discrete energy estimates.  For the scalar
Allen--Cahn specialization, we prove unconditional dissipation of the modified
discrete energy.  If
$\kappa\ge\|f'\|_{C[-\beta,\beta]}$, both the predictor and the corrected
solution preserve the MBP for arbitrary time step sizes.  Combining the
energy law and the MBP gives uniform two-sided bounds for the GSAV factor,
discrete time-regularity estimates, and a fully discrete error bound of order
$O(\tau^2+h^2)$ under the stated regularity assumptions and the usual
smallness condition used in the convergence argument.

The main contributions of this work are summarized as follows:
\begin{itemize}
    \item We construct a stagewise linear, stabilized GSAV--ETD2 scheme and
          introduce an operator reformulation that facilitates energy and
          error analysis for its scalar Allen--Cahn specialization.
    \item We establish unconditional modified-energy dissipation and an
          unconditional discrete MBP for both the first-order predictor and
          the second-order corrected solution under the explicit structural
          conditions \eqref{eq:intro_mbp_condition} and
          $\kappa\ge\|f'\|_{C[-\beta,\beta]}$.
    \item For the periodic scalar Allen--Cahn problem with central finite
          differences in space, we derive the optimal estimate
          $\|e_{u,h}^n\|_{H_h^1}+|e_{s,h}^n|\le C(\tau^2+h^2)$ by combining the energy
          law, the MBP, and discrete time-regularity estimates.
\end{itemize}

The analytical and computational components of the paper have distinct scopes.
Sections 2--5 formulate and analyze the scalar Allen--Cahn specialization,
which isolates the stabilization mechanism responsible for the energy law,
the MBP, and the fully discrete error estimate.  The coupled SmA model
motivates this construction and is used in Section 6 only as a computational
application.  Accordingly, the scalar theorems are not invoked as stability
or convergence results for the full coupled system.

The rest of this paper is organized as follows.  Section 2 presents the fully
discrete GSAV exponential-integrator formulation and its operator
reformulation.  Section 3 establishes unconditional modified-energy
dissipation, Section 4 proves the maximum bound principle, and Section 5 gives
the fully discrete error analysis for the scalar GSAV--ETD2 scheme.
Section 6 reports numerical experiments for the coupled SmA system.
\section{Fully discrete GSAV exponential-integrator schemes}\label{section2}
\subsection{Spatial discretization and some preliminaries}
We use a periodic box $\Omega=[0,L_d]^3$ and present the spatial
discretization in three dimensions; the two-dimensional version used below is
obtained by suppressing the third coordinate.  Given a positive integer $J$,
let $h=L_d/J$.  All variables are stored at the primary grid points.  The set
of grid points is
\begin{equation*}
    \mathbf{E} = \left\{ (x_p, y_q, z_r) = (ph, qh, rh) \mid p, q, r = 0, 1, \dots, J \right\}.
\end{equation*}
The corresponding periodic grid-function space is
\begin{equation*}
    E_h^{\rm per} = \{ U : \mathbf{E} \to \mathbb{R} \mid U_{0,q,r} = U_{J,q,r}, \, U_{p,0,r} = U_{p,J,r}, \, U_{p,q,0} = U_{p,q,J} \text{ for all } 0 \le p, q, r \le J \}.
\end{equation*}
Periodic boundary conditions are imposed by assigning the corresponding
interior values to the ghost points.  For example, for any
$U\in E_h^{\rm per}$,
\begin{equation*}
    U_{-1,q,r} = U_{J-1,q,r}, \quad U_{J+1,q,r} = U_{1,q,r}, \quad \forall 0 \le q, r \le J.
\end{equation*}
Analogous periodic extensions are used in the $q$- and $r$-directions.

For $U\in E_h^{\mathrm{per}}$, define the forward, backward, and central
difference operators in the $x$-direction by
\begin{align*}
    (D_1^+ U)_{p,q,r} = \frac{U_{p+1,q,r} - U_{p,q,r}}{h},\quad
    (D_1^- U)_{p,q,r} = \frac{U_{p,q,r} - U_{p-1,q,r}}{h},\quad
    (D_1^c U)_{p,q,r} = \frac{U_{p+1,q,r} - U_{p-1,q,r}}{2h},
\end{align*}
The operators $D_2^{\pm,c}$ and $D_3^{\pm,c}$ in the $y$- and
$z$-directions are defined analogously.  We also write
$D_{k,l}^cU=D_k^cD_l^cU$ for $k,l\in\{1,2,3\}$.

On the collocated grid, define the discrete gradient
$\nabla_h:E_h^{\rm per}\to(E_h^{\rm per})^3$ by forward differences,
\begin{equation*}
    (\nabla_h U)_{p,q,r} = ( (D_1^+ U)_{p,q,r}, (D_2^+ U)_{p,q,r}, (D_3^+ U)_{p,q,r} )^T,
\end{equation*}
and the corresponding discrete divergence operator $\nabla_h \cdot : (E_h^{\rm per})^3 \to E_h^{\rm per}$ using backward differences:
\begin{equation*}
    \nabla_h \cdot (U^{(1)}, U^{(2)}, U^{(3)})^T = D_1^- U^{(1)} + D_2^- U^{(2)} + D_3^- U^{(3)}.
\end{equation*}

Because all variables are collocated, no averaging operator is needed in the
discrete inner products:
\begin{align*}
    \langle U, V \rangle_h = h^3 \sum_{p,q,r=1}^{J} U_{p,q,r} V_{p,q,r} \quad \forall U, V \in E_h^{\rm per},\quad
    [\mathbf{U}, \mathbf{V}]_h = \sum_{k=1}^3 \langle U^{(k)}, V^{(k)} \rangle_h \quad \forall \mathbf{U}, \mathbf{V} \in (E_h^{\rm per})^3.
\end{align*}
For any $U\in E_h^{\rm per}$, define the discrete $L^2$, $H^1$, $H^2$,
$H^4$, and $L^\infty$ norms by
\begin{align*}
    \|U\|_h^2       & = \langle U, U \rangle_h, \quad
    \|\nabla_h U\|_h^2 = [\nabla_h U, \nabla_h U]_h = \sum_{k=1}^3 \langle D_k^+ U, D_k^+ U \rangle_h, \quad\|U\|_\infty = \max_{0 \le p,q,r \le J} |U_{p,q,r}|,\\
    \|U\|_{H_h^1}^2 & = \|U\|_h^2 + \|\nabla_h U\|_h^2, \quad
    \|U\|_{H_h^2}^2 = \|U\|_{H_h^1}^2 + \|\Delta_h U\|_h^2,\quad
    \|U\|_{H_h^4}^2 = \|U\|_{H_h^2}^2 + \|\Delta_h^2 U\|_h^2,
\end{align*}
Here $\ell_h^2$ denotes $E_h^{\rm per}$ equipped with $\|\cdot\|_h$;
vector- and tensor-valued discrete norms are understood componentwise.  We
also set $\Delta_h U=\sum_{k=1}^3D_k^-D_k^+U$ and
$\Delta_h^2U=\Delta_h(\Delta_h U)$.
Discrete summation by parts gives, for any $U,V\in E_h^{\rm per}$,
\begin{equation*}
    \begin{aligned}
        \langle D_k^- D_k^+ U, V \rangle_h & = - \langle D_k^+ U, D_k^+ V \rangle_h = \langle U, D_k^- D_k^+ V \rangle_h,                                      \\
        \langle D_{k,l}^c U, V \rangle_h   & = - \langle D_l^c U, D_k^c V \rangle_h = - \langle D_k^c U, D_l^c V \rangle_h = \langle U, D_{k,l}^c V \rangle_h,
    \end{aligned}
\end{equation*}
which also implies that
\begin{equation}\label{eq_c1}
    \langle -\Delta_h U, V \rangle_h =  [\nabla_h U, \nabla_h V]_h = \langle U, -\Delta_h V \rangle_h,\quad
    \langle \Delta_h^2 U, V \rangle_h = \langle \Delta_h U,  \Delta_h V \rangle_h = \langle U, \Delta_h^2 V \rangle_h.
\end{equation}

In two dimensions, the spatial sums, tensor indices, and powers of $h$ are
adjusted accordingly.

Since $f$ is continuously differentiable,
$\|f'\|_{C[-\beta,\beta]}<\infty$. We first recall the following
standard estimate \cite{du2021}.

\begin{lemma}\label{lem2_1}
Under assumption \eqref{eq:intro_mbp_condition}, if
$\kappa\geq\|f'\|_{C[-\beta,\beta]}$ for some constant $\kappa>0$, then
$|f(\xi)+\kappa\xi|\leq\kappa\beta$ for all
$\xi\in[-\beta,\beta]$.
\end{lemma}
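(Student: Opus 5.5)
Consider the auxiliary function $g(\xi)=f(\xi)+\kappa\xi$ on $[-\beta,\beta]$. The plan is to show that $g$ is nondecreasing there, and then to evaluate it at the two endpoints, where assumption \eqref{eq:intro_mbp_condition} gives the needed one-sided bounds.

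\emph{Monotonicity.} Since $f\in C^1$, we have $g'(\xi)=f'(\xi)+\kappa$. The hypothesis $\kappa\ge\|f'\|_{C[-\beta,\beta]}$ means $|f'(\xi)|\le\kappa$ for every $\xi\in[-\beta,\beta]$. Hence $g'(\xi)\ge\kappa-|f'(\xi)|\ge0$ on this interval, so $g$ is nondecreasing on $[-\beta,\beta]$. It follows that
\[
g(-\beta)\le g(\xi)\le g(\beta)\qquad\text{for all }\xi\in[-\beta,\beta].
\]

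\emph{Endpoint bounds.} For the upper bound, $g(\beta)=f(\beta)+\kappa\beta\le\kappa\beta$, because $f(\beta)\le0$. For the lower bound, $g(-\beta)=f(-\beta)-\kappa\beta\ge-\kappa\beta$, because $f(-\beta)\ge0$. Combining these with the monotonicity gives $-\kappa\beta\le f(\xi)+\kappa\xi\le\kappa\beta$, which is the claim.

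No step here is a genuine obstacle. The only point needing care is that the derivative bound is used only on the closed interval $[-\beta,\beta]$, where the hypothesis on $\kappa$ is imposed. Monotonicity therefore follows there by the mean value theorem, and nothing is required outside the interval.
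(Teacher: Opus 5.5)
Your proof is correct. The paper gives no proof of its own and simply recalls the lemma from \cite{du2021}, where the standard argument is the same as yours: $f(\xi)+\kappa\xi$ is nondecreasing on $[-\beta,\beta]$ because $f'+\kappa\ge0$ there, and the endpoint signs $f(\beta)\le0$, $f(-\beta)\ge0$ then give the two-sided bound. As a minor notational point, rename your auxiliary function, since $g$ already denotes the GSAV factor in this paper.
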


Since $E_h^{\rm per}$ is finite dimensional, a grid function and a linear
grid operator can be identified with a vector in $\mathbb R^{N_h}$ and a
matrix in $\mathbb R^{N_h\times N_h}$, respectively, where $N_h$ is the
number of independent periodic grid values. We recall the following
standard properties of matrix functions; see \cite{du2021}.

\begin{lemma}\label{lem2_2}
Let $\phi$ be defined on the spectrum of a diagonalizable matrix
$A\in\mathbb R^{m\times m}$, with eigenvalues
$\{\lambda_i\}_{i=1}^m$. Then:
\begin{enumerate}
    \item[(i)] $\phi(A)$ commutes with $A$, and
    $\phi(A^T)=\phi(A)^T$;

    \item[(ii)] the eigenvalues of $\phi(A)$ are
    $\{\phi(\lambda_i)\mid 1\leq i\leq m\}$;

    \item[(iii)] $\phi(P^{-1}AP)=P^{-1}\phi(A)P$ for any nonsingular
    $P\in\mathbb R^{m\times m}$.
\end{enumerate}
\end{lemma}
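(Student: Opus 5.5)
The statement to prove is Lemma~\ref{lem2_2}. Since $A$ is diagonalizable, I will work through an eigen-decomposition. Write $A=V\Lambda V^{-1}$ with $\Lambda=\operatorname{diag}(\lambda_1,\dots,\lambda_m)$. For a function $\phi$ defined on the spectrum, the matrix function is then $\phi(A)=V\phi(\Lambda)V^{-1}$, where $\phi(\Lambda)=\operatorname{diag}(\phi(\lambda_1),\dots,\phi(\lambda_m))$.

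I first check that this definition does not depend on the choice of $V$. Equivalently, $\phi(A)$ equals $p(A)$ for any polynomial $p$ that interpolates $\phi$ at the distinct eigenvalues; Lagrange interpolation gives such a $p$. Indeed, $p(A)=Vp(\Lambda)V^{-1}$ and $p(\Lambda)=\phi(\Lambda)$. I expect this well-definedness point to be the only step needing care, and everything else reduces to it.

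With the polynomial representation the three items follow directly.

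\emph{(i).} Since $\phi(A)=p(A)$ is a polynomial in $A$, it commutes with $A$. Next, $A^T$ is also diagonalizable with the same eigenvalues, via $A^T=V^{-T}\Lambda V^{T}$. The same interpolating polynomial therefore gives $\phi(A^T)=p(A^T)=p(A)^T=\phi(A)^T$.

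\emph{(ii).} The matrix $\phi(A)=V\phi(\Lambda)V^{-1}$ is similar to the diagonal matrix $\phi(\Lambda)$. Its eigenvalues are therefore $\phi(\lambda_i)$, counted with multiplicity.

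\emph{(iii).} The matrix $P^{-1}AP=(P^{-1}V)\Lambda(P^{-1}V)^{-1}$ is diagonalizable with the same spectrum. Hence
\[
\phi(P^{-1}AP)=p(P^{-1}AP)=P^{-1}p(A)P=P^{-1}\phi(A)P,
\]
where the middle equality uses $(P^{-1}AP)^k=P^{-1}A^kP$ term by term.

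For the paper's purposes, $A$ is the symmetric discrete Laplacian (plus stabilization), so the result can also be cited from the standard reference \cite{du2021}.
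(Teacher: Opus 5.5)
Your proof is correct. Defining $\phi(A)=V\phi(\Lambda)V^{-1}$ and identifying it with $p(A)$, for a polynomial $p$ interpolating $\phi$ at the distinct eigenvalues, settles well-definedness, and (i)--(iii) then follow exactly as you describe. If the spectrum is complex, $p$ may have complex coefficients, but this is harmless because plain transposition is still linear.

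The paper gives no proof of its own: it simply recalls the lemma from \cite{du2021}, which rests on the matrix-function theory in \cite{nicholas2008functions}, where these properties are derived from the same interpolating-polynomial characterization. Your argument is therefore essentially that standard proof, specialized to the diagonalizable case where only the values of $\phi$ on the spectrum matter.
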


We write $\|\cdot\|_{\infty\to\infty}$ for the matrix norm induced by the
grid maximum norm. Viewed as a matrix, $\Delta_h$ is
symmetric, negative semidefinite, and weakly diagonally dominant with
negative diagonal entries. Since $\Delta_h$ generates a contraction
semigroup, the following estimate holds \cite{du2021,nicholas2008functions}.

\begin{lemma}\label{lem2_3}
For any $a,b\geq0$, we have
$\|e^{a\Delta_h-bI}\|_{\infty\to\infty}\leq e^{-b}$, where
$I$ denotes the identity grid operator.
\end{lemma}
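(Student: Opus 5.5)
The plan is to factor the exponential and reduce everything to an entrywise property of the matrix $\Delta_h$. Since $bI$ commutes with $a\Delta_h$, we have
\begin{equation*}
e^{a\Delta_h-bI}=e^{-b}\,e^{a\Delta_h}.
\end{equation*}
The claim therefore reduces to showing $\|e^{a\Delta_h}\|_{\infty\to\infty}\le 1$ for every $a\ge0$. The induced maximum norm of a matrix is its maximal absolute row sum. It thus suffices to prove two facts about $e^{a\Delta_h}$: it has nonnegative entries, and each of its row sums equals one, or at most one.

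\emph{Row sums.} Because of periodicity, every row of $\Delta_h$ sums to zero. Equivalently, $\Delta_h\mathbf 1=0$, where $\mathbf 1$ is the constant grid function. Expanding the exponential series gives $e^{a\Delta_h}\mathbf 1=\mathbf 1$, so each row sum of $e^{a\Delta_h}$ equals $1$.

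\emph{Nonnegativity.} This is the only step needing care. The off-diagonal entries of $\Delta_h$ are nonnegative (they equal $h^{-2}$ for neighbouring points), and the diagonal entries equal the constant $-2d/h^2$, where $d$ is the dimension. Write
\begin{equation*}
\Delta_h = B - cI,\qquad c=2d/h^2,
\end{equation*}
where $B$ is entrywise nonnegative. Since $B$ and $cI$ commute,
\begin{equation*}
e^{a\Delta_h}=e^{-ac}e^{aB}=e^{-ac}\sum_{k\ge0}\frac{(aB)^k}{k!}.
\end{equation*}
Every term of this series is entrywise nonnegative, so $e^{a\Delta_h}\ge0$ entrywise. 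In the general, non-constant-diagonal setting one would instead take $c$ to be the largest magnitude among the diagonal entries; the same argument then applies, using weak diagonal dominance to keep $B\ge0$.

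\emph{Conclusion.} Combining the two facts, the absolute row sums of $e^{a\Delta_h}$ equal its row sums, which are $1$. Hence $\|e^{a\Delta_h}\|_{\infty\to\infty}=1$ and
\begin{equation*}
\|e^{a\Delta_h-bI}\|_{\infty\to\infty}=e^{-b}.
\end{equation*}
This in particular gives the stated inequality. If $\Delta_h$ were only weakly diagonally dominant with row sums $\le 0$ (for example, under other boundary conditions), then $\Delta_h\mathbf 1\le 0$. In that case I would use the nonnegativity of $e^{t\Delta_h}$ and the identity $\frac{d}{dt}e^{t\Delta_h}\mathbf 1=e^{t\Delta_h}\Delta_h\mathbf 1\le0$ to get $e^{a\Delta_h}\mathbf 1\le\mathbf 1$, which yields the same bound.
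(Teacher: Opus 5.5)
Your proof is correct, but it takes a different route from the one the paper relies on. The paper gives no argument of its own. It records that $\Delta_h$ is weakly diagonally dominant with negative diagonal entries, and hence generates a contraction semigroup in the maximum norm, and defers to the cited references. The standard argument in that setting uses only diagonal dominance and the sign of the diagonal. After factoring out $e^{-b}$, one can write, for instance, $e^{a\Delta_h}=\lim_{k\to\infty}(I+\tfrac{a}{k}\Delta_h)^k$. Once $1+\tfrac{a}{k}(\Delta_h)_{ii}\ge0$ for all $i$, weak diagonal dominance gives $\|I+\tfrac{a}{k}\Delta_h\|_{\infty\to\infty}=\max_i\bigl(1-\tfrac{a}{k}|(\Delta_h)_{ii}|+\tfrac{a}{k}\sum_{j\ne i}|(\Delta_h)_{ij}|\bigr)\le1$; equivalently, the maximum-norm logarithmic norm of $\Delta_h$ is nonpositive.

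You instead show that $e^{a\Delta_h}$ is a stochastic matrix. Entrywise nonnegativity comes from the shift $\Delta_h=B-cI$ with $B\ge0$, and unit row sums come from $\Delta_h\mathbf 1=0$. Your route is more elementary, gives the equality $\|e^{a\Delta_h-bI}\|_{\infty\to\infty}=e^{-b}$, and exhibits directly the positivity-preserving property assumed in the introduction.

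The dominance route is more general, because it imposes no sign condition on the off-diagonal entries. For example, for $A=\begin{pmatrix}-1&-1\\-1&-1\end{pmatrix}$ and $a>0$ one has $\|e^{aA}\|_{\infty\to\infty}=1$, although $e^{aA}$ has negative off-diagonal entries. For the same reason, the aside in your nonnegativity step is misattributed. Taking $c\ge\max_i|a_{ii}|$ only makes the diagonal of $B$ nonnegative. Nonnegativity of the off-diagonal entries must be assumed separately, and weak diagonal dominance then controls only the row sums. For the periodic central-difference $\Delta_h$ that assumption holds, so your proof of the lemma as stated is complete.
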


\begin{remark}
Besides the central difference discretization considered above, the
lumped-mass finite element method with piecewise linear basis functions
can also be used, for which the preceding estimate remains valid
\cite{du2021}.
\end{remark}
\subsection{Generalized SAV--exponential integrator schemes}
We consider the scalar Allen--Cahn equation and let $W\in C^3(\mathbb R)$
be its bulk potential.  Set
\begin{equation}\label{eq:bulk_energy_definitions}
f(v):=-W'(v),\qquad
E_1(v):=\int_\Omega W(v)\,\mathrm d\mathbf x,
\qquad
E_{1h}(v_h):=\langle W(v_h),1\rangle_h,
\end{equation}
and introduce the invariant set
\[
\mathcal M_h:=\{v_h\in E_h^{\rm per}:\|v_h\|_\infty\le\beta\}.
\]
Since $W$ is continuous on $[-\beta,\beta]$, there exist constants
$C_*\ge0$ and $C^*\ge0$, independent of $h$, such that
\begin{equation}\label{eq:bulk_energy_bounds}
-C_*\le E_{1h}(v_h)\le C^*,
\qquad v_h\in\mathcal M_h.
\end{equation}

Following the GSAV construction in \cite{ju2022generalized}, let
$\sigma:\mathbb R\to\mathbb R$ satisfy
\begin{equation}\label{eq:sigma_conditions}
\begin{aligned}
(\Sigma_1)\quad &\sigma(r)>0,
&& r\in\mathbb R,\\
(\Sigma_2)\quad &\sigma\in C^1(\mathbb R),\qquad \sigma'(r)\ge0,
&& r\in\mathbb R.
\end{aligned}
\end{equation}
For a continuous function $v$ and a grid function $v_h\in\mathcal M_h$,
respectively, define
\begin{equation}\label{eq:ac_g_definition}
g(v,r):=\frac{\sigma(r)}{\sigma(E_1(v))},
\qquad
g_h(v_h,r):=\frac{\sigma(r)}{\sigma(E_{1h}(v_h))}>0,
\qquad r\in\mathbb R.
\end{equation}
Introducing the auxiliary variable $s(t):=E_1(u(t))$ and rewriting
\eqref{1.3a}, we obtain the equivalent GSAV reformulation
\begin{align}
u_t=&\varepsilon^2\Delta u+g(u,s)f(u),\label{eq:ac_continuous_gsav}
\\
s_t=&-g(u,s)\bigl(f(u),u_t\bigr).\label{eq:ac_continuous_gsav1}
\end{align}
Here and below $(\cdot,\cdot)$ denotes the continuous $L^2(\Omega)$ inner
product.  For the exact solution, $s(t)=E_1(u(t))$ implies
$g(u(t),s(t))=1$, so the first equation reduces to the original
Allen--Cahn equation.  The second equation is precisely the chain rule for
$E_1(u(t))$, which explains the equivalence of the two formulations.
Then, applying the spatial discretization introduced in the previous subsection,
we obtain the following semidiscrete GSAV system:
\begin{subequations}\label{eq:ac_semidiscrete_gsav}
\begin{align}
\frac{\mathrm d u_h}{\mathrm dt}
&=\varepsilon^2\Delta_h u_h
+g_h(u_h,s_h)f(u_h),
\label{eq:ac_semidiscrete_gsav_u}\\
\frac{\mathrm d s_h}{\mathrm dt}
&=-g_h(u_h,s_h)
\left\langle f(u_h),\frac{\mathrm d u_h}{\mathrm dt}\right\rangle_h.
\label{eq:ac_semidiscrete_gsav_s}
\end{align}
\end{subequations}
The associated discrete modified energy is
\begin{equation}\label{energy_discrete}
\mathcal E_h(v_h,r)
:=\frac{\varepsilon^2}{2}\|\nabla_h v_h\|_h^2+r,
\qquad v_h\in E_h^{\rm per},\quad r\in\mathbb R.
\end{equation}
Indeed, taking the discrete inner product of
\eqref{eq:ac_semidiscrete_gsav_u} with $\mathrm d u_h/\mathrm dt$ and
using \eqref{eq:ac_semidiscrete_gsav_s} and summation by parts gives
\begin{equation}\label{eq:semidiscrete_modified_energy_law}
\frac{\mathrm d}{\mathrm dt}\mathcal E_h(u_h,s_h)
=-\left\|\frac{\mathrm d u_h}{\mathrm dt}\right\|_h^2\le0.
\end{equation}
Thus the time-discrete analysis below seeks to reproduce the dissipation of
the modified energy \eqref{energy_discrete}, rather than assuming decay of the
original physical energy at the discrete level.

\subsection{Stabilized GSAV--ETD2 scheme}
Let $t_n=n\tau$, and let $\mathcal I_h$ denote the nodal restriction onto
the periodic grid. We initialize the auxiliary variable consistently by
\begin{equation}\label{eq:ac_initial_data}
u_h^0=\mathcal I_h u(0),
\qquad
s_h^0=E_{1h}(u_h^0).
\end{equation}
For any grid function or scalar $\phi$, introduce the time-increment notation
\begin{align}
\bigl(
\delta^{\mathrm p}\phi^{n,*},
\delta\phi^{n+1}
\bigr)
&:=
\bigl(
\phi^{n,*}-\phi^n,
\phi^{n+1}-\phi^n
\bigr),
\label{eq:time_increments}\\
\bigl(
\delta_\tau^{\mathrm p}\phi^{n,*},
\delta_\tau\phi^{n+1}
\bigr)
&:=
\frac1\tau
\bigl(
\delta^{\mathrm p}\phi^{n,*},
\delta\phi^{n+1}
\bigr).
\label{eq:time_difference_quotients}
\end{align}
We retain the full step-indexed notation $\phi^{n,*}$ and its associated
increments throughout.

For $z\ge0$, we define the functions
\ba
\varphi_1(z)=\frac{1-e^{-z}}{z},
\quad
\varphi_2(z)=\frac{e^{-z}-1+z}{z^2},
\qquad
\bar{\varphi}_j(z)=\frac1{\varphi_j(z)},\quad j=1,2.
\ed
The values at $z=0$ are defined by continuity, so that
$\varphi_1(0)=1$ and $\varphi_2(0)=1/2$.  Moreover,
$\varphi_j(z)>0$ for $z\ge0$; hence the reciprocal functions are well
defined on the spectra of the positive operators used below.

With a
positive stabilizing constant $\kappa>0$, the semi-discrete form of
\eqref{eq:ac_continuous_gsav} is rewritten as
\begin{align}
\frac{\mathrm d u_h}{\mathrm dt}
&=
\varepsilon^2\Delta_h u_h
+
g_h(u_h,s_h)f(u_h)
-
\kappa g_h(u_h,s_h) u_h
+
\kappa g_h(u_h,s_h) u_h
\nonumber\\
&=
- L_{\kappa,h} u_h
+
 N_{\kappa,h}(u_h,s_h),
\label{eq_second-order-stabilized-system1}
\end{align}
where
\[
L_{\kappa,h}=\kappa g_h(u_h,s_h) I-\varepsilon^2\Delta_h,\qquad
N_{\kappa,h}(v_h,r):=g_h(v_h,r)f(v_h)+\kappa g_h(v_h,r)v_h.
\]
Then,
applying the variation-of-constants formula on $[t_n,t_{n+1}]$ and a
left-endpoint approximation of the nonlinear term,
we obtain the first-order GSAV--EI scheme
\begin{align}
u_h^{n+1}&=
e^{-\tau L_{\kappa,h}^n}u_h^n
+
\tau\varphi_1(\tau L_{\kappa,h}^n)N_{\kappa,h}^n(u_h^n,s_h^n),
\label{eq_etd1-u}\\
s_h^{n+1}&=
s_h^n
-
\left\langle
g_h^n f(u_h^n),u_h^{n+1}-u_h^n
\right\rangle_h.
\label{eq_etd1-s}
\end{align}
where $g_h^n=g_h(u_h^n,s_h^n)$ is the frozen GSAV factor on
$[t_n,t_{n+1}]$, and
\[
L_{\kappa,h}^n=\kappa g_h^n I-\varepsilon^2\Delta_h,\qquad
N_{\kappa,h}^n(v_h,r):=g_h(v_h,r)f(v_h)+\kappa g_h^n v_h.
\]

For the second-order method, the first-order formulas
\eqref{eq_etd1-u}--\eqref{eq_etd1-s} are used as a predictor: their output
at the current step is relabeled $(u_h^{n,*},s_h^{n,*})$ rather than
$(u_h^{n+1},s_h^{n+1})$.  Thus, given $(u_h^n,s_h^n)$, let
$(u_h^{n,*},s_h^{n,*})$ denote this first-stage predictor.
Define
\[
g_h^n=g_h(u_h^n,s_h^n),\qquad
g_h^{n,*}=g_h(u_h^{n,*},s_h^{n,*}),\qquad
\bar g_h^n=\max\{g_h^n,g_h^{n,*}\}.
\]
With a
positive stabilizing constant $\kappa>0$, the semi-discrete form of
\eqref{eq:ac_continuous_gsav} is rewritten as
\begin{align}
\frac{\mathrm d u_h}{\mathrm dt}
&=
\varepsilon^2\Delta_h u_h
+
g_h(u_h,s_h)f(u_h)
-
\kappa\bar g_h^n u_h
+
\kappa\bar g_h^n u_h
\nonumber\\
&=
-A_h^n u_h
+
\bar N_{\kappa,h}^n(u_h,s_h),
\label{eq_second-order-stabilized-system}
\end{align}
where
        \[
        A_h^n:=\kappa\bar g_h^n I-\varepsilon^2\Delta_h,\qquad
        \bar N_{\kappa,h}^n(v_h,r):=g_h(v_h,r)f(v_h)+\kappa\bar g_h^n v_h.
        \]
Because $\kappa>0$ and $\bar g_h^n>0$, $A_h^n$ is symmetric positive
definite.  Once the predictor has been computed, both $\bar g_h^n$ and
$A_h^n$ are fixed throughout the corrector.

Using the variation-of-constants formula on $[t_n,t_{n+1}]$ with this
frozen operator gives
\[
u_h(t_{n+1})
=
e^{-\tau A_h^n}u_h(t_n)
+
\int_0^\tau
e^{-(\tau-\theta)A_h^n}
\bar N_{\kappa,h}^n
\bigl(
u_h(t_n+\theta),s_h(t_n+\theta)
\bigr)
\,\mathrm d\theta.
\]
Approximating the nonlinear term by the linear interpolant through its
values at $(u_h^n,s_h^n)$ and the predictor
$(u_h^{n,*},s_h^{n,*})$, we obtain the GSAV--ETD2 update
\begin{align}
u_h^{n+1}=
e^{-\tau A_h^n}u_h^n
+
\tau\varphi_1(\tau A_h^n)\bar N_{\kappa,h}^n(u_h^n,s_h^n)
+
\tau\varphi_2(\tau A_h^n)
\left[
\bar N_{\kappa,h}^n(u_h^{n,*},s_h^{n,*})
-
\bar N_{\kappa,h}^n(u_h^n,s_h^n)
\right].
\label{eq_etd2-u}
\end{align}
Set
\begin{align}
F_h^n&:=g_h^n f(u_h^n),
\qquad
F_h^{n,*}:=g_h^{n,*} f(u_h^{n,*}),
\nonumber\\
\bar u_h^{n,*}&:=
e^{-\tau A_h^n}u_h^n
+\tau\varphi_1(\tau A_h^n)
\bar N_{\kappa,h}^n(u_h^n,s_h^n).
\label{eq:auxiliary_predictor1}
\end{align}
Finally, the auxiliary variable is updated by
\begin{align}
s_h^{n+1}
&={}
s_h^n
-
\frac12
\left\langle
F_h^n+F_h^{n,*},\delta u_h^{n+1}
\right\rangle_h\no\\
&
-\frac34
\left\langle
A_h^n (u_h^{n+1}-\bar u_h^{n,*}),u_h^{n+1}-\bar u_h^{n,*}
\right\rangle_h
-\frac47\kappa\bar g_h^n
\|u_h^{n,*}-\bar u_h^{n,*}\|_h^2.
\label{eq_s-correction}
\end{align}
Consequently, one time step is performed sequentially: first compute the
predictor $(u_h^{n,*},s_h^{n,*})$, then form $\bar g_h^n$ and $A_h^n$,
next evaluate $u_h^{n+1}$ from \eqref{eq_etd2-u}, and finally update
$s_h^{n+1}$ by \eqref{eq_s-correction}.  Every quantity on the right-hand
side is therefore known when it is used, and no nonlinear solve is required.
\begin{remark}
The two additional terms in the update of the auxiliary variable serve
different purposes. The first additional term,
$-(3/4)\langle A_h^n (u_h^{n+1}-\bar u_h^{n,*}),u_h^{n+1}-\bar u_h^{n,*}\rangle_h$,
is introduced to compensate for the energy contribution arising from the
second-order correction and thereby close the discrete energy balance. In
contrast, the second additional term,
$-(4/7)\kappa\bar g_h^n\|u_h^{n,*}-\bar u_h^{n,*}\|_h^2$,
accounts for the mismatch between the stabilization factors used in the
predictor and corrector, namely, $g_h^n$ and $\bar g_h^n$, respectively.

Moreover, $\bar u_h^{n,*}$ is contained in the second-order correction and is already computed in the numerical implementation. Thus, its introduction does not require an additional stage or incur any extra computational cost.

For a fixed spatial mesh and sufficiently smooth solutions, the stage
differences imply formally that $w_h$ and $d_h$ are at least of order
$O(\tau^2)$, so the two added scalar corrections are higher-order terms.
The mesh-uniform estimate actually needed in the convergence proof is given
later in Lemma~\ref{lem:ac_correction_defect}; that result shows directly
that these corrections do not reduce the second-order temporal accuracy.
\end{remark}
\section{Energy stability analysis}
In this section, we establish unconditional modified-energy dissipation for the GSAV--ETD2 scheme.
The positivity of $g_h^n$ and $\bar g_h^n$, together with $\kappa>0$,
implies that both $L_{\kappa,h}^n$ and $A_h^n$ are symmetric positive
definite.  Consequently, all operator functions introduced below are defined
by the spectral calculus, with constants that do not require a coupling
between $\tau$ and $h$.
We first introduce the operator weights
\begin{equation}\label{eq_psi_functions}
\begin{aligned}
\psi(z)&:=\bar{\varphi}_1(z)-z=\frac{z}{e^z-1},
&\qquad
\psi_1(z)&:=\psi(z)+\frac z2
=\frac z2\coth\left(\frac z2\right),\\
\psi_2(z)&:=\bar{\varphi}_2(z)-z
=\frac{\varphi_1(z)}{\varphi_2(z)}.
\end{aligned}
\end{equation}
The values at $z=0$ are understood by continuity; in particular,
$\varphi_1(0)=\psi(0)=\psi_1(0)=1$ and
$\varphi_2(0)=1/2$, $\psi_2(0)=2$.

Let $A$ be any self-adjoint positive semidefinite grid operator.  For a
scalar grid function $v$, set
\[
\|v\|_{\psi_A}^2
:=\left\langle\psi(\tau A)v,v\right\rangle_h,
\qquad
\|v\|_{\psi_{1,A}}^2
:=\left\langle\psi_1(\tau A)v,v\right\rangle_h .
\]
\begin{lemma}\label{lem_norm_equivalence}
For any scalar grid function $v$ and any self-adjoint positive semidefinite
operator $A$, the following bounds hold:
\begin{equation}\label{eq_norm_bounds_1}
\|\psi(\tau A)v\|_h
\le
\|v\|_{\psi_A}
\le
\|v\|_h
\le
\|v\|_{\psi_{1,A}},
\qquad
0\le
\left\langle\psi(\tau A)Av,v\right\rangle_h
\le
\left\langle Av,v\right\rangle_h .
\end{equation}
\end{lemma}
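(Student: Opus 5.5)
The plan is to reduce every inequality in \eqref{eq_norm_bounds_1} to a scalar inequality on the spectrum of $\tau A$. Since $A$ is self-adjoint and positive semidefinite with respect to $\langle\cdot,\cdot\rangle_h$, it has an $\ell_h^2$-orthonormal eigenbasis $\{e_i\}$ with eigenvalues $\lambda_i\ge0$. Expand $v=\sum_i c_i e_i$. By Lemma~\ref{lem2_2}, $\psi(\tau A)$ and $\psi_1(\tau A)$ are diagonal in the same basis, with eigenvalues $\psi(\tau\lambda_i)$ and $\psi_1(\tau\lambda_i)$. Each quantity in the statement then becomes a weighted sum:
\[
\|\psi(\tau A)v\|_h^2=\sum_i\psi(z_i)^2c_i^2,\quad
\|v\|_{\psi_A}^2=\sum_i\psi(z_i)c_i^2,\quad
\|v\|_h^2=\sum_i c_i^2,\quad
\|v\|_{\psi_{1,A}}^2=\sum_i\psi_1(z_i)c_i^2,
\]
where $z_i=\tau\lambda_i\ge0$. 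Similarly,
\[
\langle\psi(\tau A)Av,v\rangle_h=\sum_i\psi(z_i)\lambda_i c_i^2 .
\]
Comparing these sums term by term shows that everything follows once we know, for all $z\ge0$,
\[
0<\psi(z)\le1\le\psi_1(z).
\]
Indeed, $0<\psi\le1$ gives $\psi^2\le\psi\le1$, which yields the first two inequalities. Multiplying $0<\psi\le1$ by $\lambda_i\ge0$ yields the two bounds involving $A$. Finally, $\psi_1\ge1$ gives the last norm inequality.

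It remains to check the scalar facts.
\begin{itemize}
\item Positivity of $\psi(z)=z/(e^z-1)$ for $z>0$ is immediate, and $\psi(0)=1$ by continuity.
\item The bound $\psi(z)\le1$ is equivalent to $e^z-1\ge z$, which holds by convexity of the exponential.
\item For $\psi_1(z)=\frac z2\coth(\frac z2)$, set $x=z/2\ge0$. The claim $x\coth x\ge1$ is equivalent to $x\cosh x\ge\sinh x$. Both sides vanish at $x=0$, and the derivative of the difference is $x\sinh x\ge0$, so the inequality holds.
\end{itemize}

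These scalar checks are routine, so I do not expect a genuine obstacle. The only point needing care is the spectral reduction itself. We must use that $A$ is self-adjoint with respect to the weighted inner product $\langle\cdot,\cdot\rangle_h$; since that inner product is a constant multiple of the Euclidean one, $A$ is a symmetric matrix. The matrix functions are then defined by the spectral calculus as stated before the lemma, and $z=0$ eigenvalues are handled through the continuous extensions $\psi(0)=\psi_1(0)=1$.
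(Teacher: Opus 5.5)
Your proposal is correct and follows the paper's proof: both use the spectral calculus for $\tau A$ to reduce every claim to the scalar facts $0\le\psi\le 1$, $\psi^2\le\psi$, $\psi_1\ge 1$, and $0\le z\psi(z)\le z$ on $[0,\infty)$. Your version also proves those scalar inequalities explicitly, via $e^z-1\ge z$ and $x\cosh x\ge\sinh x$, whereas the paper leaves them to the reader.
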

\begin{proof}
For $z\ge0$,
\[
0\le\psi(z)\le1,
\qquad
0\le\psi^2(z)\le\psi(z),
\qquad
\psi_1(z)\ge1,
\qquad
0\le z\psi(z)\le z .
\]
The desired inequalities follow from the spectral mapping theorem applied to
$\tau A$.
\end{proof}
\begin{lemma}\label{lem_phi_ineq}
    For any $z \ge 0$, the following inequalities hold:
    \ba
    \varphi_2(z) \le \varphi_1(z) \le 2\varphi_2(z),\quad
        1 \le \frac{\varphi_1(z)}{\varphi_2(z)} \le 2,\quad
\frac{1}{2} \le \frac{\varphi_2(z)}{\varphi_1(z)} \le 1,\\
\bar{\varphi}_1(z)\le\bar{\varphi}_2(z) \le 2\bar{\varphi}_1(z),\quad \frac{1}{2}\bar{\varphi}_2(z) - \bar{\varphi}_1(z) + \frac{1}{2} z \ge 0.
    \ed
\end{lemma}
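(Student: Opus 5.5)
Everything reduces to scalar inequalities for $z\ge0$. We argue at $z>0$; at $z=0$ the values $\varphi_1(0)=1$, $\varphi_2(0)=1/2$ satisfy every claim, and continuity covers the rest. Since $\varphi_j>0$, the ratio bounds and the bounds on $\bar\varphi_j$ are equivalent rewritings of the single chain $\varphi_2\le\varphi_1\le2\varphi_2$: dividing by $\varphi_2$ gives $1\le\varphi_1/\varphi_2\le2$, inverting gives $1/2\le\varphi_2/\varphi_1\le1$, and taking reciprocals gives $\bar\varphi_1\le\bar\varphi_2\le2\bar\varphi_1$. So the work splits into two parts: the basic chain, and the last inequality.

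For the chain, multiply through by $z^2>0$. The inequality $\varphi_2\le\varphi_1$ becomes $e^{-z}-1+z\le z(1-e^{-z})$, that is, $(1+z)e^{-z}\le1$. This holds because $(1+z)e^{-z}$ has derivative $-ze^{-z}\le0$ and equals $1$ at $z=0$. The inequality $\varphi_1\le2\varphi_2$ becomes $z-ze^{-z}\le2e^{-z}-2+2z$, that is, $q(z):=z+(z+2)e^{-z}-2\ge0$. Here $q(0)=0$ and $q'(z)=1-(z+1)e^{-z}\ge0$ by the previous fact, so $q\ge0$.

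For the last inequality, use the identities from \eqref{eq_psi_functions}: $\bar\varphi_1(z)=\psi(z)+z$ and $\bar\varphi_2(z)=\psi_2(z)+z=\varphi_1(z)/\varphi_2(z)+z$. Substituting gives
\[
\tfrac12\bar\varphi_2-\bar\varphi_1+\tfrac z2=\tfrac12\frac{\varphi_1}{\varphi_2}-\psi(z).
\]
By the chain just proved, $\tfrac12\varphi_1/\varphi_2\ge\tfrac12$. So it suffices to show $\psi(z)=z/(e^z-1)\le\tfrac12$. That bound, however, fails near $z=0$, where $\psi\to1$. Since $\tfrac12\varphi_1/\varphi_2\to1$ there as well, the inequality is tight at the origin and must be handled directly rather than through the crude bound.

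Working directly, write the claim as $\varphi_1/\varphi_2\ge 2z/(e^z-1)$. Multiplying out with $\varphi_1=(1-e^{-z})/z$ and $\varphi_2=(e^{-z}-1+z)/z^2$, it becomes
\[
z(1-e^{-z})(e^z-1)\ \ge\ 2z\,(e^{-z}-1+z).
\]
Divide by $z>0$ and note $(1-e^{-z})(e^z-1)=e^z+e^{-z}-2$. The claim is then $e^z-e^{-z}-2z\ge0$, i.e.\ $2(\sinh z-z)\ge0$, which holds since $\sinh z\ge z$ for $z\ge0$.

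The main obstacle is this last inequality. It is sharp at $z=0$, so one has to avoid the lossy route through $\varphi_1/\varphi_2\ge1$ and instead clear denominators to reach the clean $\sinh z\ge z$ form.
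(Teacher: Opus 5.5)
Your proof is correct and follows essentially the paper's route: you clear denominators and reduce each claim to an elementary one-variable inequality. Your function $q$ is the paper's, and your final inequality $e^{z}-e^{-z}-2z\ge0$ is exactly $e^{z}N(z)\ge0$ for the paper's numerator $N(z)=1-e^{-2z}-2ze^{-z}$. The remaining differences are cosmetic: you prove $\varphi_2\le\varphi_1$ algebraically via $(1+z)e^{-z}\le1$ rather than from the integral representations, and you organize the last inequality through the identity $\tfrac12\bar\varphi_2-\bar\varphi_1+\tfrac z2=\tfrac12\psi_2(z)-\psi(z)$ instead of expanding the combined fraction directly.
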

\begin{proof}
By the integral representations of the ETD functions,
\[
\varphi_1(z)=\int_0^1e^{-sz}\,\mathrm ds,
\qquad
\varphi_2(z)=\int_0^1(1-s)e^{-sz}\,\mathrm ds.
\]
Since $1-s \le 1$ for $s \in (0,1)$ and $e^{-sz} > 0$, it immediately follows that $\varphi_2(z) \le \varphi_1(z)$.

To prove $\varphi_1(z) \le 2\varphi_2(z)$, we use their explicit algebraic forms:
\[
\frac{1-e^{-z}}{z}\le2\frac{e^{-z}-1+z}{z^2}.
\]
For $z>0$, multiplication by $z^2$ reduces this inequality to
$q(z)\ge0$, where
\[
q(z)=z+(z+2)e^{-z}-2,\qquad
q'(z)=1-(z+1)e^{-z},\qquad q''(z)=ze^{-z}.
\]
Thus $q''(z)\ge0$, and $q'(0)=q(0)=0$ implies $q(z)\ge0$.  The
case $z=0$ follows by continuity.

For the last inequality, substitution of the explicit ETD functions gives
\[
\begin{aligned}
\frac12\bar\varphi_2(z)-\bar\varphi_1(z)+\frac z2
&= \frac{z}{2} \left( \frac{z}{e^{-z}-1+z} - \frac{2}{1-e^{-z}} + 1 \right) \\
&= \frac{z}{2} \frac{z(1-e^{-z}) - 2(e^{-z}-1+z) + (e^{-z}-1+z)(1-e^{-z})}{(e^{-z}-1+z)(1-e^{-z})}.
\end{aligned}
\]
Let $N(z)$ denote the numerator of the fractional term. Expanding and collecting the terms yields:
\[
\begin{aligned}
N(z) &= (z - ze^{-z}) - (2e^{-z} - 2 + 2z) + (2e^{-z} - e^{-2z} - 1 + z - ze^{-z}) \\
&= 1 - e^{-2z} - 2ze^{-z}.
\end{aligned}
\]
To determine the sign of $N(z)$ for $z \ge 0$, we evaluate its first derivative:
\[
N'(z) = 2e^{-2z} - 2(e^{-z} - ze^{-z}) = 2e^{-z}(e^{-z} + z - 1).
\]
Using the well-known exponential inequality $e^{-z} \ge 1 - z$, it is evident that $e^{-z} + z - 1 \ge 0$. Thus, $N'(z) \ge 0$ for all $z \ge 0$. Since $N(0) = 1 - 1 - 0 = 0$ and $N(z)$ is monotonically increasing, we have $N(z) \ge 0$ for $z \ge 0$.

Furthermore, for any $z > 0$, the denominator $D(z) = (e^{-z}-1+z)(1-e^{-z})$ is strictly positive because $e^{-z} > 1-z$ and $1 > e^{-z}$.

Consequently, the fraction is nonnegative, and multiplication by $z/2$
proves the result for $z>0$.  Continuity gives the result at $z=0$.
\end{proof}

We next establish the discrete energy dissipation property of the proposed
second-order scheme. A key difficulty arises from the fact that the predictor
and corrector employ different stabilization factors. More precisely, the
predictor is computed with $g_h^n$, whereas the second-order correction is based
on
\[
\bar g_h^n=\max\{g_h^n,g_h^{n,*}\}.
\]
Consequently, the energy identities associated with the two stages cannot be
combined directly. To overcome this mismatch, we introduce an auxiliary
intermediate state $(\bar u_h^{n,*},\bar s_h^{n,*})$, constructed with the same
stabilization factor $\bar g_h^n$ as that used in the corrector. This auxiliary
step provides an energy-consistent bridge between the solution at $t_n$ and
that at $t_{n+1}$. Accordingly, the one-step energy variation is decomposed as
\[
\mathcal E_h(u_h^{n+1},s_h^{n+1})
-\mathcal E_h(u_h^n,s_h^n)
=
\Bigl[
\mathcal E_h(u_h^{n+1},s_h^{n+1})
-\mathcal E_h(\bar u_h^{n,*},\bar s_h^{n,*})
\Bigr]
+
\Bigl[
\mathcal E_h(\bar u_h^{n,*},\bar s_h^{n,*})
-\mathcal E_h(u_h^n,s_h^n)
\Bigr].
\]
The first term describes the energy variation of the correction step, while
the second corresponds to the auxiliary predictor constructed with the same
stabilization factor. The difference between the actual predictor $u_h^{n,*}$
and the auxiliary state $\bar u_h^{n,*}$ is then controlled by the additional
dissipative correction in the scalar auxiliary-variable update. This
construction allows the two stages to be linked consistently and leads to
the unconditional discrete energy dissipation law.

\begin{theorem}[Unconditional energy dissipation]\label{them3_1}
For any $\tau>0$ and $\kappa>0$, the predictor and the corrected solution
of \eqref{eq_etd1-u}--\eqref{eq_s-correction} satisfy
\begin{equation}\label{eq_ac-energy-stability}
\mathcal E_h(u_h^{n,*},s_h^{n,*})
\le
\mathcal E_h(u_h^n,s_h^n),
\qquad
\mathcal E_h(u_h^{n+1},s_h^{n+1})
\le
\mathcal E_h(u_h^n,s_h^n),
\qquad n\ge0 .
\end{equation}
\end{theorem}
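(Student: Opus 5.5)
The plan is to rewrite each ETD stage as a backward-Euler-type identity, test it against the corresponding increment, and then add up the resulting dissipation inequalities. Throughout, the auxiliary intermediate state $(\bar u_h^{n,*},\bar s_h^{n,*})$ described before the theorem serves as the bridge between the two stages.

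\emph{Step 1 (predictor).} Use $e^{-z}-1=-z\varphi_1(z)$. Then \eqref{eq_etd1-u}, written at the predictor stage, is equivalent to
\[
\bar\varphi_1(\tau L_{\kappa,h}^n)\,\delta_\tau^{\rm p}u_h^{n,*}=-L_{\kappa,h}^nu_h^n+N_{\kappa,h}^n(u_h^n,s_h^n)=\varepsilon^2\Delta_hu_h^n+F_h^n .
\]
Splitting $\bar\varphi_1(z)=\psi(z)+z$ gives
\[
\psi(\tau L_{\kappa,h}^n)\delta_\tau^{\rm p}u_h^{n,*}+\kappa g_h^n\,\delta^{\rm p}u_h^{n,*}-\varepsilon^2\Delta_hu_h^{n,*}=F_h^n .
\]
Now take the inner product with $\delta^{\rm p}u_h^{n,*}$. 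Use \eqref{eq_c1} and the identity $2\langle a,a-b\rangle=|a|^2-|b|^2+|a-b|^2$, and substitute $\langle F_h^n,\delta^{\rm p}u_h^{n,*}\rangle_h=-(s_h^{n,*}-s_h^n)$ from \eqref{eq_etd1-s}. This yields
\[
\mathcal E_h(u_h^{n,*},s_h^{n,*})-\mathcal E_h(u_h^n,s_h^n)
=-\tau\|\delta_\tau^{\rm p}u_h^{n,*}\|_{\psi_{L}}^2-\kappa g_h^n\|\delta^{\rm p}u_h^{n,*}\|_h^2-\tfrac{\varepsilon^2}{2}\|\nabla_h\delta^{\rm p}u_h^{n,*}\|_h^2\le0,
\]
where the first term is nonnegative by Lemma~\ref{lem_norm_equivalence}. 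This proves the first inequality.

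\emph{Step 2 (auxiliary state).} Set $\bar s_h^{n,*}:=s_h^n-\langle F_h^n,\bar u_h^{n,*}-u_h^n\rangle_h$. Since $\bar u_h^{n,*}$ satisfies the same identity as in Step 1 with $L_{\kappa,h}^n$ replaced by $A_h^n$, the same computation gives
\[
\mathcal E_h(\bar u_h^{n,*},\bar s_h^{n,*})\le\mathcal E_h(u_h^n,s_h^n)-\mathcal D_1,
\]
where $\mathcal D_1$ collects the three nonnegative terms for $v:=\bar u_h^{n,*}-u_h^n$. These are $\langle\psi(\tau A_h^n)v,v\rangle_h/\tau$, $\kappa\bar g_h^n\|v\|_h^2$ and $\tfrac{\varepsilon^2}{2}\|\nabla_hv\|_h^2$. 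I will keep $\mathcal D_1$ explicitly, since it is needed to absorb cross terms later.

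\emph{Step 3 (corrector).} Let $w:=u_h^{n+1}-\bar u_h^{n,*}$ and $d:=u_h^{n,*}-\bar u_h^{n,*}$. Then \eqref{eq_etd2-u} reads
\[
\bar\varphi_2(\tau A_h^n)\,w/\tau=F_h^{n,*}-F_h^n+\kappa\bar g_h^n(u_h^{n,*}-u_h^n),
\]
and $u_h^{n,*}-u_h^n=v+d$. Next compute $\mathcal E_h(u_h^{n+1},s_h^{n+1})-\mathcal E_h(\bar u_h^{n,*},\bar s_h^{n,*})$ using \eqref{eq_s-correction}. The nonlinear terms combine into
\[
\tfrac12\langle F_h^n-F_h^{n,*},v\rangle_h-\tfrac12\langle F_h^n+F_h^{n,*},w\rangle_h .
\]
I will eliminate $F$ entirely by substituting two relations. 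The first is $F_h^{n,*}-F_h^n=\bar\varphi_2(\tau A_h^n)w/\tau-\kappa\bar g_h^n(v+d)$. The second is $F_h^n=\bar\varphi_1(\tau A_h^n)v/\tau-\varepsilon^2\Delta_hu_h^n$. The gradient terms $\tfrac{\varepsilon^2}{2}(\|\nabla_hu_h^{n+1}\|_h^2-\|\nabla_h\bar u_h^{n,*}\|_h^2)$ then pair with the $\varepsilon^2\Delta_h$ contributions. After this substitution, the total energy change $\mathcal E_h(u_h^{n+1},s_h^{n+1})-\mathcal E_h(u_h^n,s_h^n)$ becomes a quadratic form in $(v,w,d)$. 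The coefficients of this form are commuting functions of $\tau A_h^n$, plus the scalar $\kappa\bar g_h^n$ multiplying $d$.

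\emph{Main obstacle.} The key difficulty is showing that this quadratic form is nonpositive. The $v$–$w$ block should be handled spectrally, diagonalizing $\tau A_h^n$ by Lemma~\ref{lem2_2}. The coupling should be controlled by Lemma~\ref{lem_phi_ineq}, in particular $\tfrac12\bar\varphi_2-\bar\varphi_1+\tfrac z2\ge0$ and $\bar\varphi_1\le\bar\varphi_2\le2\bar\varphi_1$. The correction $-\tfrac34\langle A_h^nw,w\rangle_h$ should absorb the leftover $w$-terms. The cross terms $\kappa\bar g_h^n\langle d,\cdot\rangle_h$ I will split with Young's inequality. One piece is charged to $\kappa\bar g_h^n\|v\|_h^2$ from $\mathcal D_1$ and to $\tfrac34\kappa\bar g_h^n\|w\|_h^2$, which is available since $A_h^n\ge\kappa\bar g_h^nI$. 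The other piece is charged to the added $-\tfrac47\kappa\bar g_h^n\|d\|_h^2$. I expect the specific constants $\tfrac34$ and $\tfrac47$ to come out exactly as the weights that make these Young splittings close. First I would verify the scalar $2\times2$ (or $3\times3$) determinant condition pointwise in $z\ge0$, and only then lift it to operators via the spectral calculus.
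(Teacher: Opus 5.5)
\textbf{The gap is in the step you flag as the main obstacle: it carries the whole second inequality, and your sketch would not settle it as written.} Your Steps 1--3 follow the paper's route: the backward-Euler form of the predictor, the auxiliary state $(\bar u_h^{n,*},\bar s_h^{n,*})$ built with $\bar g_h^n$, and the elimination of $F_h^n$ and $F_h^{n,*}-F_h^n$. The paper instead tests the summed corrector equation with $w_h$, but the outcome is the same. Carried out, your substitutions give exactly the paper's identity $\mathcal E_h(u_h^{n+1},s_h^{n+1})-\mathcal E_h(u_h^n,s_h^n)=-\tau^{-1}\sum_j\mathbf X_j^TM(z_j,\alpha)\mathbf X_j$, with $\mathbf X_j=(v_j,w_j,d_j)^T$ and $\alpha=\tau\kappa\bar g_h^n$.

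\emph{The symbol has two parameters, not one.} Contrary to your description, $\alpha$ does not enter only through the $d$-terms. It also appears in the $v$--$w$ entry $\tfrac14(2\psi+\bar\varphi_2+\alpha)$ and in the $w$--$w$ entry $\tfrac12\bar\varphi_2+\tfrac\alpha2+\tfrac z4$; these come from $\varepsilon^2\langle\nabla_h v_h,\nabla_h w_h\rangle_h$, $\tfrac{\varepsilon^2}{2}\|\nabla_h w_h\|_h^2$ and the stabilization pieces. So positivity must be shown on the region $0\le\alpha\le z$. Its edge $\alpha=z$ is attained by the constant mode for every $z>0$, so a check ``pointwise in $z$'' does not suffice. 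The paper notes that $M(z,\alpha)$ is affine in $\alpha$. Hence $M(z,\alpha)=(1-\alpha/z)M(z,0)+(\alpha/z)M(z,z)$, and only the two endpoints need checking.

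\emph{Your diagonal Young splitting of the $d$-cross terms is too lossy.} The $v$--$w$ block is nearly singular along $v_j=-w_j$: for small $z$ all its entries are close to $1$, while its determinant is only about $3z/4$. Also, half of the $\kappa\bar g_h^n\|v_h\|_h^2$ in your $\mathcal D_1$ is already cancelled by the $+\tfrac12\kappa\bar g_h^n\|v_h\|_h^2$ produced by the corrector. So diagonal weight of size $\alpha$ is not available for free.

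Concretely, take $\alpha=z$ and set $P_0=\psi+\tfrac z2$, $t=\tfrac z4$, $b=\tfrac12\bar\varphi_2-P_0$. Lemma~\ref{lem_phi_ineq} gives only $P_0\ge2t$ and $0\le b\le2t$. The budget allowed by $\tfrac{4\alpha}{7}d_j^2$ restricts your Young weights to $\eta_1^{-1}+\eta_2^{-1}\le\tfrac{16}{7}$. At the admissible point $P_0=b=2t$, the $v$--$w$ block minus $\mathrm{diag}(\eta_1t,\eta_2t)$ then fails to be positive semidefinite for every such pair.

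\emph{The missing observation} is that both $d$-couplings equal $-\alpha/4$. Writing $(X_1,X_2,X_3)=(v_j,w_j,d_j)$, the variable $X_3$ therefore meets only $S=X_1+X_2$, the full increment $\delta u_h^{n+1}$. Completing the square gives $\tfrac{16t}{7}X_3^2-2tSX_3=\tfrac{16t}{7}(X_3-\tfrac7{16}S)^2-\tfrac{7t}{16}S^2$, which puts the entire loss on the $P_0S^2$ term. The remaining form $(P_0-\tfrac{7t}{16})S^2+bSX_2+3tX_2^2$ has determinant at least $\tfrac{59}{16}t^2$ from the crude bounds alone. At $\alpha=0$, the determinant $3P_0t-t^2+bt-\tfrac{b^2}{4}$ is nonnegative by the same bounds. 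Both margins are strict, so $\tfrac34$ and $\tfrac47$ are sufficient choices, not values forced by the argument as you anticipated.
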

The two inequalities in \eqref{eq_ac-energy-stability} compare the predictor
and the corrected value separately with the solution at time level $n$; no
ordering between the actual predictor energy and the corrected energy is
claimed or needed.

\begin{proof}
We first consider the predictor step.
By polarization and summation by parts, the energy difference satisfies
\begin{align}
\mathcal E_h(u_h^{n,*},s_h^{n,*})
-\mathcal E_h(u_h^n,s_h^n)
={}&
\frac{\varepsilon^2}{2}
\left(
\|\nabla_h u_h^{n,*}\|_h^2
-\|\nabla_h u_h^n\|_h^2
\right)
+s_h^{n,*}-s_h^n
\nonumber\\
={}&
\varepsilon^2
\left\langle
\nabla_h u_h^{n,*},
\nabla_h \delta^{\mathrm p}u_h^{n,*}
\right\rangle_h
-\frac{\varepsilon^2}{2}
\|\nabla_h \delta^{\mathrm p}u_h^{n,*}\|_h^2
+s_h^{n,*}-s_h^n
\nonumber\\
={}&
-\frac{\varepsilon^2}{2}
\|\nabla_h \delta^{\mathrm p}u_h^{n,*}\|_h^2
-\varepsilon^2
\left\langle
\Delta_h u_h^{n,*},
\delta^{\mathrm p}u_h^{n,*}
\right\rangle_h
+s_h^{n,*}-s_h^n.
\label{eq:predictor_energy_difference}
\end{align}
Applying the operator
$\tau^{-1}(\psi(\tau L_{\kappa,h}^n)+\tau L_{\kappa,h}^n)$ to
\eqref{eq_etd1-u}, we can rewrite the predictor as the equivalent backward-Euler form
\begin{equation}\label{eq_etd1-predictor-be}
\psi(\tau L_{\kappa,h}^n)\delta_\tau^{\mathrm p}u_h^{n,*}
+
L_{\kappa,h}^n u_h^{n,*}
=
N_{\kappa,h}^n(u_h^n,s_h^n).
\end{equation}
Taking the discrete inner product of
\eqref{eq_etd1-predictor-be} with $\delta^{\mathrm p}u_h^{n,*}$ and using
$N_{\kappa,h}^n(u_h^n,s_h^n)=F_h^n+\kappa g_h^n u_h^n$ gives
\begin{align}
\langle L_{\kappa,h}^n u_h^{n,*},\delta^{\mathrm p}u_h^{n,*}\rangle_h
={}&-\frac1\tau
\left\langle
\psi(\tau L_{\kappa,h}^n)\delta^{\mathrm p}u_h^{n,*},
\delta^{\mathrm p}u_h^{n,*}
\right\rangle_h
+\langle F_h^n,\delta^{\mathrm p}u_h^{n,*}\rangle_h
+\kappa g_h^n\langle u_h^n,\delta^{\mathrm p}u_h^{n,*}\rangle_h .
\label{eq:predictor_test_identity}
\end{align}
Combining \eqref{eq:predictor_energy_difference},
\eqref{eq:predictor_test_identity}, and the scalar predictor
\eqref{eq_etd1-s}, we obtain
\begin{align}
&\mathcal E_h(u_h^{n,*},s_h^{n,*})-\mathcal E_h(u_h^n,s_h^n)\no\\
={}&-\frac1\tau
\left\langle
\psi(\tau L_{\kappa,h}^n)\delta^{\mathrm p}u_h^{n,*},
\delta^{\mathrm p}u_h^{n,*}
\right\rangle_h
-\frac12\left\langle
L_{\kappa,h}^n\delta^{\mathrm p}u_h^{n,*},
\delta^{\mathrm p}u_h^{n,*}
\right\rangle_h
-\frac{\kappa g_h^n}{2}\|\delta^{\mathrm p}u_h^{n,*}\|_h^2
\le0.
\label{eq_predictor_energy_decay}
\end{align}
This proves the first inequality in \eqref{eq_ac-energy-stability}.

We next turn to the corrected solution.
We first introduce an auxiliary predictor employing the same stabilization factor $\bar g_h^n$ as the corrector, defined by
\begin{align}
\bar u_h^{n,*}
&:=e^{-\tau A_h^n}u_h^n
+\tau\varphi_1(\tau A_h^n)\bar N_{\kappa,h}^n(u_h^n,s_h^n),\label{eq:auxiliary_predictor2}
\\
\bar s_h^{n,*}
&:=s_h^n-\langle F_h^n,v_h\rangle_h .
\end{align}
This scalar value is not an additional numerical stage.  It is introduced
only in the proof so that the nonlinear work
$\langle F_h^n,v_h\rangle_h$ cancels exactly in the energy difference from
$(u_h^n,s_h^n)$ to $(\bar u_h^{n,*},\bar s_h^{n,*})$.

For each fixed time step $n$, introduce the auxiliary increments
\begin{align}
\bigl(v_h,w_h,d_h\bigr)
&:=
\bigl(
\bar u_h^{n,*}-u_h^n,
u_h^{n+1}-\bar u_h^{n,*},
u_h^{n,*}-\bar u_h^{n,*}
\bigr),
\label{eq:virtual_time_increments}\\
\bigl(\delta_\tau v_h,\delta_\tau w_h,\delta_\tau d_h\bigr)
&:=
\frac1\tau
\bigl(
\bar u_h^{n,*}-u_h^n,
u_h^{n+1}-\bar u_h^{n,*},
u_h^{n,*}-\bar u_h^{n,*}
\bigr).
\label{eq:virtual_time_difference_quotients}
\end{align}
In particular,
\begin{equation}
\delta u_h^{n+1}=v_h+w_h,
\qquad
\delta^{\mathrm p}u_h^{n,*}=v_h+d_h,
\qquad
u_h^{n,*}-u_h^{n+1}=d_h-w_h.
\label{eq:virtual_increment_relations}
\end{equation}

Applying $\tau^{-1}\bar\varphi_1(\tau A_h^n)$ to the auxiliary
predictor \eqref{eq:auxiliary_predictor2} gives the reformulated ETD1 form
\begin{equation}
\psi(\tau A_h^n)\delta_\tau v_h
+
A_h^n \bar u_h^{n,*}
=
\bar N_{\kappa,h}^n(u_h^n,s_h^n)
=F_h^n+\kappa\bar g_h^n u_h^n.
\label{eq:auxiliary_predictor_reformulated}
\end{equation}
Similarly to \eqref{eq:predictor_energy_difference}, the energy difference
between the auxiliary stage and the previous step satisfies
\begin{align}
&\mathcal E_h(\bar u_h^{n,*},\bar s_h^{n,*})-\mathcal E_h(u_h^n,s_h^n)
\nonumber\\
={}&-\frac1\tau
\left\langle
\psi(\tau A_h^n)v_h,
v_h
\right\rangle_h
-\frac12
\left\langle
A_h^n v_h,
v_h
\right\rangle_h
-\frac{\kappa\bar g_h^n}{2}
\|v_h\|_h^2
\le0.
\label{eq:auxiliary_predictor_decay}
\end{align}
Applying $\tau^{-1}\bar\varphi_2(\tau A_h^n)$ to the corrector \eqref{eq_etd2-u} gives
\begin{equation}
\bar\varphi_2(\tau A_h^n)
\delta_\tau w_h
=\kappa\bar g_h^n(v_h+d_h)+F_h^{n,*}-F_h^n.
\label{eq:corrector_increment_identity}
\end{equation}
Adding \eqref{eq:auxiliary_predictor_reformulated} to
\eqref{eq:corrector_increment_identity}, and using
$\psi_2(z)=\bar\varphi_2(z)-z$, yields
\begin{equation}
\psi_2(\tau A_h^n)
\delta_\tau w_h
+\psi(\tau A_h^n)
\delta_\tau v_h
+A_h^n u_h^{n+1}
=F_h^{n,*}+\kappa\bar g_h^n u_h^{n,*}.
\label{eq:corrector_reformulated}
\end{equation}
Equivalently,
\begin{align}
&\psi_2(\tau A_h^n)
\delta_\tau w_h
+\psi(\tau A_h^n)
\delta_\tau v_h
-\varepsilon^2\Delta_h u_h^{n+1}
\nonumber\\
&\qquad
=\kappa\bar g_h^n(d_h-w_h)+F_h^{n,*}.
\label{eq:corrector_physical_form}
\end{align}
Equation \eqref{eq:corrector_increment_identity} also gives the exact
nonlinear increment identity
\begin{equation}
F_h^{n,*}-F_h^n
=\bar\varphi_2(\tau A_h^n)
 \delta_\tau w_h
-\kappa\bar g_h^n(v_h+d_h).
\label{eq:nonlinear_increment_identity}
\end{equation}
In particular, the sign of the stabilization contribution in
\eqref{eq:nonlinear_increment_identity} is negative.

We now evaluate the energy change from the auxiliary stage to the
corrected solution.  Polarization and summation by parts give
\begin{align}
&\mathcal E_h(u_h^{n+1},s_h^{n+1})
-\mathcal E_h(\bar u_h^{n,*},\bar s_h^{n,*})
\nonumber\\
={}&-\frac{\varepsilon^2}{2}
\|\nabla_h w_h\|_h^2
-\varepsilon^2
\left\langle
\Delta_h u_h^{n+1},w_h
\right\rangle_h
+s_h^{n+1}-\bar s_h^{n,*}.
\label{eq:corrector_energy_from_auxiliary}
\end{align}
Taking the discrete inner product of
\eqref{eq:corrector_physical_form} with
$w_h$ yields
\begin{align}
&-\varepsilon^2
\left\langle
\Delta_h u_h^{n+1},w_h
\right\rangle_h
\nonumber\\
={}&-
\left\langle
\psi_2(\tau A_h^n)
\delta_\tau w_h,
w_h
\right\rangle_h
\nonumber\\
&-
\left\langle
\psi(\tau A_h^n)
\delta_\tau v_h,
w_h
\right\rangle_h
\nonumber\\
&+\kappa\bar g_h^n
\left\langle
d_h-w_h,w_h
\right\rangle_h
+\left\langle
F_h^{n,*},w_h
\right\rangle_h.
\label{eq:corrector_equation_test}
\end{align}
On the other hand, the definitions of $\bar s_h^{n,*}$ and $s_h^{n+1}$ imply
\begin{align}
&s_h^{n+1}-\bar s_h^{n,*}
+\left\langle F_h^{n,*},w_h\right\rangle_h
\nonumber\\
={}&-\frac12
\left\langle F_h^n+F_h^{n,*},\delta u_h^{n+1}\right\rangle_h
+\left\langle F_h^n,v_h\right\rangle_h
+\left\langle F_h^{n,*},w_h\right\rangle_h
\nonumber\\
&-\frac34
\left\langle
A_h^n w_h,
w_h
\right\rangle_h
-\frac47\kappa\bar g_h^n
\|d_h\|_h^2
\nonumber\\
={}&\frac12
\left\langle
F_h^{n,*}-F_h^n,w_h
\right\rangle_h
-\frac12
\left\langle
F_h^{n,*}-F_h^n,v_h
\right\rangle_h
\nonumber\\
&-\frac34
\left\langle
A_h^n w_h,
w_h
\right\rangle_h
-\frac47\kappa\bar g_h^n
\|d_h\|_h^2
\nonumber\\
={}&\frac12
\left\langle
\bar\varphi_2(\tau A_h^n)
\delta_\tau w_h
-\kappa\bar g_h^n(v_h+d_h),
w_h
\right\rangle_h
\nonumber\\
&-\frac12
\left\langle
\bar\varphi_2(\tau A_h^n)
\delta_\tau w_h
-\kappa\bar g_h^n(v_h+d_h),
v_h
\right\rangle_h
\nonumber\\
&-\frac34
\left\langle
A_h^n w_h,
w_h
\right\rangle_h
-\frac47\kappa\bar g_h^n
\|d_h\|_h^2.
\label{eq:scalar_correction_detailed}
\end{align}
The last equality uses \eqref{eq:nonlinear_increment_identity}; in
particular, both stabilization terms have the signs dictated by that exact
identity.

Combining \eqref{eq:corrector_energy_from_auxiliary},
\eqref{eq:corrector_equation_test}, and
\eqref{eq:scalar_correction_detailed}, and using
\eqref{eq:virtual_increment_relations}, we obtain
\begin{align}
&\mathcal E_h(u_h^{n+1},s_h^{n+1})
-\mathcal E_h(\bar u_h^{n,*},\bar s_h^{n,*})
\nonumber\\
={}&-\frac{\varepsilon^2}{2}
\|\nabla_h w_h\|_h^2
-
\left\langle
\psi_2(\tau A_h^n)
\delta_\tau w_h,
w_h
\right\rangle_h
\nonumber\\
&-
\left\langle
\psi(\tau A_h^n)
\delta_\tau v_h,
w_h
\right\rangle_h
\nonumber\\
&+\kappa\bar g_h^n
\left\langle
d_h
-w_h,
w_h
\right\rangle_h
\nonumber\\
&+\frac12
\left\langle
\bar\varphi_2(\tau A_h^n)
\delta_\tau w_h
-\kappa\bar g_h^n(v_h+d_h),
w_h
\right\rangle_h
\nonumber\\
&-\frac12
\left\langle
\bar\varphi_2(\tau A_h^n)
\delta_\tau w_h
-\kappa\bar g_h^n(v_h+d_h),
v_h
\right\rangle_h
\nonumber\\
&-\frac34
\left\langle
A_h^n w_h,
w_h
\right\rangle_h
-\frac47\kappa\bar g_h^n
\|d_h\|_h^2.
\label{eq:corrector_energy_detailed}
\end{align}
This identity displays separately the ETD2 correction, the change of the
stabilization factor, and the compensating term in the scalar update.

Adding \eqref{eq:auxiliary_predictor_decay} to
\eqref{eq:corrector_energy_detailed}, using
$\delta^{\mathrm p}u_h^{n,*}=v_h+d_h$, and collecting like
terms gives the exact operator identity
\begin{align}
&\mathcal E_h(u_h^{n+1},s_h^{n+1})-\mathcal E_h(u_h^n,s_h^n)
\nonumber\\
={}&-\frac1\tau\Bigg\{
\left\langle
\left(\psi(\tau A_h^n)
+\frac\tau2A_h^n\right)
v_h,
v_h
\right\rangle_h
\nonumber\\
&+\frac12\left\langle
\left(
2\psi(\tau A_h^n)
+\bar\varphi_2(\tau A_h^n)
+\tau\kappa\bar g_h^n I
\right)
v_h,
w_h
\right\rangle_h
\nonumber\\
&-\frac{\tau\kappa\bar g_h^n}{2}
\left\langle
v_h,
d_h
\right\rangle_h
\nonumber\\
&+\left\langle
\left(
\frac12\bar\varphi_2(\tau A_h^n)
+\frac{\tau\kappa\bar g_h^n}{2}I
+\frac\tau4A_h^n
\right)
w_h,
w_h
\right\rangle_h
\nonumber\\
&-\frac{\tau\kappa\bar g_h^n}{2}
\left\langle
w_h,
d_h
\right\rangle_h
+\frac{4\tau\kappa\bar g_h^n}{7}
\|d_h\|_h^2
\Bigg\}.
\label{eq:corrector_operator_energy_identity}
\end{align}

We next determine the sign of the right-hand side of
\eqref{eq:corrector_operator_energy_identity}.  Let
\[
\alpha:=\tau\kappa\bar g_h^n.
\]
Then the quantity inside the braces in
\eqref{eq:corrector_operator_energy_identity} can be written as
\begin{align}
\mathscr B_h^n
={}&
\left\langle
\left(\psi(\tau A_h^n)+\frac{\tau}{2}A_h^n\right)v_h,
v_h
\right\rangle_h
\nonumber\\
&+\frac12
\left\langle
\left(
2\psi(\tau A_h^n)
+\bar\varphi_2(\tau A_h^n)
+\alpha I
\right)v_h,
w_h
\right\rangle_h
\nonumber\\
&-\frac{\alpha}{2}\langle v_h,d_h\rangle_h
\nonumber\\
&+
\left\langle
\left(
\frac12\bar\varphi_2(\tau A_h^n)
+\frac{\alpha}{2}I
+\frac{\tau}{4}A_h^n
\right)w_h,
w_h
\right\rangle_h
\nonumber\\
&-\frac{\alpha}{2}\langle w_h,d_h\rangle_h
+\frac{4\alpha}{7}\|d_h\|_h^2.
\label{eq:Bh_operator_form}
\end{align}
Thus,
\begin{equation}
\mathcal E_h(u_h^{n+1},s_h^{n+1})
-\mathcal E_h(u_h^n,s_h^n)
=
-\frac1\tau\mathscr B_h^n.
\label{eq:energy_difference_Bh}
\end{equation}

Since $-\Delta_h$ is symmetric and positive semidefinite with respect to
$\langle\cdot,\cdot\rangle_h$, the operator
\[
A_h^n
=
\kappa\bar g_h^n I-\varepsilon^2\Delta_h
\]
is symmetric positive definite. Hence, by the spectral theorem,
there exists an $h$-orthonormal eigenbasis $\{e_j\}_j$ such that
\[
A_h^n e_j=\lambda_je_j,
\qquad
\langle e_i,e_j\rangle_h=\delta_{ij}.
\]
Since $\psi(\tau A_h^n)$ and
$\bar\varphi_2(\tau A_h^n)$ are functions of the same symmetric
operator $A_h^n$, they are diagonalized by the same eigenbasis.
More precisely,
\[
\psi(\tau A_h^n)e_j
=
\psi(\tau\lambda_j)e_j,
\qquad
\bar\varphi_2(\tau A_h^n)e_j
=
\bar\varphi_2(\tau\lambda_j)e_j.
\]

We now expand the three increments in this basis:
\[
v_h=\sum_jv_je_j,
\qquad
w_h=\sum_jw_je_j,
\qquad
d_h=\sum_jd_je_j,
\]
where
\[
v_j:=\langle v_h,e_j\rangle_h,
\qquad
w_j:=\langle w_h,e_j\rangle_h,
\qquad
d_j:=\langle d_h,e_j\rangle_h.
\]
Set
\[
z_j:=\tau\lambda_j.
\]
Using the orthonormality of $\{e_j\}_j$ and the spectral representation
of the operator functions, the individual terms in
\eqref{eq:Bh_operator_form} become
\begin{align*}
\left\langle
\left(\psi(\tau A_h^n)+\frac{\tau}{2}A_h^n\right)v_h,v_h
\right\rangle_h
&=
\sum_j
\left(\psi(z_j)+\frac{z_j}{2}\right)v_j^2,
\\
\frac12
\left\langle
\left(
2\psi(\tau A_h^n)
+\bar\varphi_2(\tau A_h^n)
+\alpha I
\right)v_h,w_h
\right\rangle_h
&=
\sum_j
\frac{
2\psi(z_j)+\bar\varphi_2(z_j)+\alpha
}{2}
v_jw_j,
\\
-\frac{\alpha}{2}\langle v_h,d_h\rangle_h
&=
-\sum_j\frac{\alpha}{2}v_jd_j,
\\
\left\langle
\left(
\frac12\bar\varphi_2(\tau A_h^n)
+\frac{\alpha}{2}I
+\frac{\tau}{4}A_h^n
\right)w_h,w_h
\right\rangle_h
&=
\sum_j
\left(
\frac12\bar\varphi_2(z_j)
+\frac{\alpha}{2}
+\frac{z_j}{4}
\right)w_j^2,
\\
-\frac{\alpha}{2}\langle w_h,d_h\rangle_h
&=
-\sum_j\frac{\alpha}{2}w_jd_j,
\\
\frac{4\alpha}{7}\|d_h\|_h^2
&=
\sum_j\frac{4\alpha}{7}d_j^2.
\end{align*}

Consequently,
\begin{align}
\mathscr B_h^n
=
\sum_j\Bigg[
&\left(\psi(z_j)+\frac{z_j}{2}\right)v_j^2
+\frac{
2\psi(z_j)+\bar\varphi_2(z_j)+\alpha
}{2}v_jw_j
-\frac{\alpha}{2}v_jd_j
\nonumber\\
&+
\left(
\frac12\bar\varphi_2(z_j)
+\frac{\alpha}{2}
+\frac{z_j}{4}
\right)w_j^2
-\frac{\alpha}{2}w_jd_j
+\frac{4\alpha}{7}d_j^2
\Bigg].
\label{eq:modal_scalar_quadratic}
\end{align}

For each mode, introduce
\[
\mathbf X_j
:=
\begin{pmatrix}
v_j\\
w_j\\
d_j
\end{pmatrix}.
\]
Recall that for a symmetric matrix
$M=(m_{kl})_{k,l=1}^3$,
\[
\mathbf X^TM\mathbf X
=
m_{11}X_1^2+m_{22}X_2^2+m_{33}X_3^2
+2m_{12}X_1X_2
+2m_{13}X_1X_3
+2m_{23}X_2X_3.
\]
Therefore, comparison with
\eqref{eq:modal_scalar_quadratic} gives
\begin{equation}
\mathscr B_h^n
=
\sum_j
\mathbf X_j^TM(z_j,\alpha)\mathbf X_j,
\label{eq:Bh_modal_matrix}
\end{equation}
where
\begin{equation}
M(z,\alpha)
=
\begin{pmatrix}
\psi(z)+\dfrac z2
&
\dfrac{2\psi(z)+\bar\varphi_2(z)+\alpha}{4}
&
-\dfrac{\alpha}{4}
\\[2mm]
\dfrac{2\psi(z)+\bar\varphi_2(z)+\alpha}{4}
&
\dfrac{\bar\varphi_2(z)}2+\dfrac{\alpha}{2}+\dfrac z4
&
-\dfrac{\alpha}{4}
\\[2mm]
-\dfrac{\alpha}{4}
&
-\dfrac{\alpha}{4}
&
\dfrac{4\alpha}{7}
\end{pmatrix}.
\label{eq:corrector_energy_matrix}
\end{equation}
Combining \eqref{eq:energy_difference_Bh} and
\eqref{eq:Bh_modal_matrix}, we obtain
\begin{equation}
\mathcal E_h(u_h^{n+1},s_h^{n+1})
-\mathcal E_h(u_h^n,s_h^n)
=
-\frac1\tau
\sum_j
\mathbf X_j^TM(z_j,\alpha)\mathbf X_j.
\label{eq:corrector_energy_matrix_identity}
\end{equation}

It remains to prove that
\[
M(z_j,\alpha)\succeq0
\qquad\text{for every }j.
\]
To this end, let
$-\Delta_h e_j=\mu_j e_j$ with $\mu_j\ge0$. Then
\[
\lambda_j
=
\kappa\bar g_h^n+\varepsilon^2\mu_j,
\]
and hence
\[
z_j
=
\tau\lambda_j
=
\alpha+\tau\varepsilon^2\mu_j.
\]
Therefore,
\begin{equation}
0\le\alpha\le z_j.
\label{eq:alpha_z_relation}
\end{equation}

We next exploit the fact that, for each fixed $z\ge0$,
$M(z,\alpha)$ depends affinely on $\alpha$. Indeed,
\[
M(z,\alpha)
=
M(z,0)+\alpha B,
\]
where
\[
B=
\begin{pmatrix}
0&\dfrac14&-\dfrac14\\[1mm]
\dfrac14&\dfrac12&-\dfrac14\\[1mm]
-\dfrac14&-\dfrac14&\dfrac47
\end{pmatrix}.
\]
For $z>0$, since
\[
M(z,z)=M(z,0)+zB,
\]
we obtain the exact identity
\begin{equation}
M(z,\alpha)
=
\left(1-\frac{\alpha}{z}\right)M(z,0)
+
\frac{\alpha}{z}M(z,z).
\label{eq:energy_matrix_convexity}
\end{equation}
By \eqref{eq:alpha_z_relation},
$0\le\alpha/z\le1$. Thus $M(z,\alpha)$ is a convex
combination of $M(z,0)$ and $M(z,z)$. Consequently, it suffices to prove
that both endpoint matrices are positive semidefinite.

For fixed $z>0$, define
\[
P_0:=\psi(z)+\frac z2,
\qquad
t:=\frac z4,
\qquad
b:=\frac12\bar\varphi_2(z)-\psi(z)-\frac z2.
\]
Then
\[
\psi(z)=P_0-2t,
\qquad
\frac12\bar\varphi_2(z)=P_0+b.
\]
By Lemma~\ref{lem_phi_ineq},
\begin{equation}
0\le b\le\frac z2=2t,
\qquad
P_0\ge\frac z2=2t.
\label{eq:P0_b_bounds}
\end{equation}

We first consider $\alpha=0$. Direct substitution gives
\[
M(z,0)
=
\begin{pmatrix}
P_0
&
P_0-t+\dfrac b2
&
0
\\[2mm]
P_0-t+\dfrac b2
&
P_0+b+t
&
0
\\[2mm]
0&0&0
\end{pmatrix}.
\]
Hence it suffices to examine the leading $2\times2$ principal block.
Its first diagonal entry satisfies $P_0\ge0$, while its determinant is
\begin{align}
D_0
&=
P_0(P_0+b+t)
-
\left(P_0-t+\frac b2\right)^2
\nonumber\\
&=
3P_0t-t^2+bt-\frac{b^2}{4}.
\label{eq:M0_determinant}
\end{align}
Using \eqref{eq:P0_b_bounds}, we have
\[
3P_0t-t^2\ge5t^2
\]
and
\[
bt-\frac{b^2}{4}
=
b\left(t-\frac b4\right)\ge0.
\]
Therefore $D_0\ge0$, and hence
\begin{equation}
M(z,0)\succeq0.
\label{eq:M0_psd}
\end{equation}

We next consider the other endpoint $\alpha=z=4t$. In this case,
\[
M(z,z)
=
\begin{pmatrix}
P_0
&
P_0+\dfrac b2
&
-t
\\[2mm]
P_0+\dfrac b2
&
P_0+b+3t
&
-t
\\[2mm]
-t&-t&\dfrac{16t}{7}
\end{pmatrix}.
\]
For an arbitrary vector
$\mathbf X=(X_1,X_2,X_3)^T$, its quadratic form is
\begin{align}
\mathbf X^TM(z,z)\mathbf X
={}&
P_0X_1^2
+
(2P_0+b)X_1X_2
+
(P_0+b+3t)X_2^2
\nonumber\\
&-2t(X_1+X_2)X_3
+\frac{16t}{7}X_3^2.
\label{eq:Mzz_quadratic_initial}
\end{align}
Introducing
\[
S:=X_1+X_2,
\]
the first three terms in
\eqref{eq:Mzz_quadratic_initial} can be rewritten as
\[
P_0S^2+bSX_2+3tX_2^2.
\]
Moreover,
\[
\frac{16t}{7}X_3^2-2tSX_3
=
\frac{16t}{7}
\left(X_3-\frac7{16}S\right)^2
-\frac{7t}{16}S^2.
\]
Therefore,
\begin{align}
\mathbf X^TM(z,z)\mathbf X
={}&
\frac{16t}{7}
\left(
X_3-\frac7{16}(X_1+X_2)
\right)^2
\nonumber\\
&+
\left(P_0-\frac{7t}{16}\right)S^2
+bSX_2
+3tX_2^2.
\label{eq:energy_matrix_endpoint_square}
\end{align}

It remains to examine the two-variable quadratic form
\[
\left(P_0-\frac{7t}{16}\right)S^2
+bSX_2
+3tX_2^2.
\]
Its associated symmetric matrix is
\[
\begin{pmatrix}
P_0-\dfrac{7t}{16}&\dfrac b2\\[2mm]
\dfrac b2&3t
\end{pmatrix}.
\]
By \eqref{eq:P0_b_bounds},
\[
P_0-\frac{7t}{16}
\ge
2t-\frac{7t}{16}
=
\frac{25t}{16}
\ge0.
\]
Furthermore,
\begin{align}
3t\left(P_0-\frac{7t}{16}\right)
-\frac{b^2}{4}
&\ge
3t\left(2t-\frac{7t}{16}\right)
-\frac{(2t)^2}{4}
\nonumber\\
&=
\frac{59}{16}t^2
\ge0.
\label{eq:Mzz_remaining_determinant}
\end{align}
Hence the above $2\times2$ matrix is positive semidefinite. Together
with the nonnegative square term in
\eqref{eq:energy_matrix_endpoint_square}, this yields
\begin{equation}
M(z,z)\succeq0.
\label{eq:Mzz_psd}
\end{equation}

Combining \eqref{eq:M0_psd}, \eqref{eq:Mzz_psd}, and
\eqref{eq:energy_matrix_convexity}, we conclude that
\[
M(z,\alpha)\succeq0
\qquad
\text{for all }z>0,\quad 0\le\alpha\le z.
\]
Under the constraint $0\le\alpha\le z$, the case $z=0$ necessarily has
$\alpha=0$ and follows by continuity of
$\psi$, $\bar\varphi_2$, and $M$ at the origin. Therefore,
\[
\mathbf X_j^TM(z_j,\alpha)\mathbf X_j\ge0
\qquad\text{for every }j.
\]
Finally, \eqref{eq:corrector_energy_matrix_identity} gives
\[
\mathcal E_h(u_h^{n+1},s_h^{n+1})
-
\mathcal E_h(u_h^n,s_h^n)
\le0.
\]
This proves the second inequality in
\eqref{eq_ac-energy-stability}.
\end{proof}

\begin{remark}[Role of the auxiliary stage and the correction terms]
The quantity
\[
d_h=u_h^{n,*}-\bar u_h^{n,*}
\]
measures exactly the change caused by replacing $g_h^n$ in the predictor by
$\bar g_h^n$ in the corrector.  The term
$-(4/7)\kappa\bar g_h^n
\|d_h\|_h^2$
in \eqref{eq_s-correction} compensates the corresponding positive part of
the full three-variable quadratic form, while the term involving
$-(3/4)\langle A_h^n w_h,w_h\rangle_h$ controls the ETD2
correction increment.  If $\bar g_h^n=g_h^n$, then $\bar u_h^{n,*}=u_h^{n,*}$ and the
stabilization-mismatch increment vanishes.
\end{remark}
\section{Maximum bound principle}
In this section, we prove that the first-order GSAV--EI scheme and the
stabilized GSAV--ETD2 scheme preserve the maximum bound principle.
Unlike the energy argument, the MBP proof uses the structural condition
\eqref{eq:intro_mbp_condition} and the lower bound on $\kappa$.  The proof
uses three preliminary results: Lemma~\ref{lem2_1} controls the stabilized
nonlinear term on $[-\beta,\beta]$, Lemma~\ref{lem2_2} transfers the scalar
ETD integral identities to the discrete operators, and Lemma~\ref{lem2_3}
controls the discrete heat semigroup in the maximum norm.
\begin{lemma}[Maximum bound principle for the first-order GSAV--EI scheme]\label{lem:mbp-gsav-ei1}
Under \eqref{eq:intro_mbp_condition}, if
$\kappa\ge \|f'\|_{C[-\beta,\beta]}$, then the first-order GSAV--EI scheme
\eqref{eq_etd1-u}--\eqref{eq_etd1-s} preserves the maximum bound principle unconditionally; that is, for any $\tau>0$ and any $n\ge0$,
\[
\|u_h^0\|_\infty\le\beta
\quad\Longrightarrow\quad
\|u_h^n\|_\infty\le\beta,
\qquad n\ge0.
\]
\end{lemma}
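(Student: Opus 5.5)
We argue by induction on $n$. The case $n=0$ is the hypothesis $\|u_h^0\|_\infty\le\beta$. For the inductive step, assume $\|u_h^n\|_\infty\le\beta$ and consider \eqref{eq_etd1-u}. The scalar update \eqref{eq_etd1-s} only enters through the factor $g_h^n=g_h(u_h^n,s_h^n)$. By \eqref{eq:ac_g_definition} and $(\Sigma_1)$, this factor is a strictly positive number whatever real value $s_h^n$ takes. Hence no information about $s_h^n$ is needed, and the argument is uniform in $s_h^n$.

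\emph{Step 1: bound on the nonlinear term.} Write $N_{\kappa,h}^n(u_h^n,s_h^n)=g_h^n\bigl(f(u_h^n)+\kappa u_h^n\bigr)$, understood pointwise on the grid. Since every nodal value of $u_h^n$ lies in $[-\beta,\beta]$, Lemma~\ref{lem2_1} gives
\[
\|N_{\kappa,h}^n(u_h^n,s_h^n)\|_\infty\le g_h^n\kappa\beta .
\]

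\emph{Step 2: maximum-norm bounds on the two operators.} Set $b:=\tau\kappa g_h^n>0$, so that $e^{-\tau L_{\kappa,h}^n}=e^{\tau\varepsilon^2\Delta_h-bI}$. Lemma~\ref{lem2_3} with $a=\tau\varepsilon^2$ then yields $\|e^{-\tau L_{\kappa,h}^n}\|_{\infty\to\infty}\le e^{-b}$.

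For the $\varphi_1$ term we use the integral representation $\varphi_1(z)=\int_0^1e^{-\sigma z}\,\mathrm d\sigma$. Because $L_{\kappa,h}^n$ is symmetric and hence diagonalizable, Lemma~\ref{lem2_2} lets us pass from the scalar identity to the operator identity
\[
\tau\varphi_1(\tau L_{\kappa,h}^n)=\int_0^\tau e^{-\theta L_{\kappa,h}^n}\,\mathrm d\theta .
\]
Concretely, one applies the scalar identity eigenvalue by eigenvalue in a common eigenbasis. Applying Lemma~\ref{lem2_3} to each integrand, with $a=\theta\varepsilon^2$ and $b=\theta\kappa g_h^n$, gives
\[
\|\tau\varphi_1(\tau L_{\kappa,h}^n)\|_{\infty\to\infty}\le\int_0^\tau e^{-\theta\kappa g_h^n}\,\mathrm d\theta=\frac{1-e^{-\tau\kappa g_h^n}}{\kappa g_h^n}.
\]

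\emph{Step 3: conclusion.} Combining Steps 1 and 2 in \eqref{eq_etd1-u},
\[
\|u_h^{n+1}\|_\infty\le e^{-\tau\kappa g_h^n}\beta+\frac{1-e^{-\tau\kappa g_h^n}}{\kappa g_h^n}\,g_h^n\kappa\beta=\beta .
\]
This holds for every $\tau>0$, which closes the induction.

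The only delicate point is Step 2, namely transferring the integral representation of $\varphi_1$ to the matrix setting so that Lemma~\ref{lem2_3} can be applied inside the integral. I would justify this through the spectral calculus (Lemma~\ref{lem2_2}), using that $\Delta_h$ and $I$ commute. The other point to check is that the variable stabilization factor $g_h^n$ cancels exactly in Step 3. This cancellation works because the same $g_h^n$ appears both in $L_{\kappa,h}^n$ and in the stabilizing part of $N_{\kappa,h}^n$, so no bound on $g_h^n$ is required.
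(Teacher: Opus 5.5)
Your proof is correct and follows essentially the same route as the paper: induction, Lemma~\ref{lem2_1} with $g_h^n>0$ to get $\|N_{\kappa,h}^n(u_h^n,s_h^n)\|_\infty\le\kappa\beta g_h^n$, transfer of the integral representation of $\tau\varphi_1$ via Lemma~\ref{lem2_2}, Lemma~\ref{lem2_3} inside the integral, and exact cancellation of $g_h^n$ at the end. The only cosmetic difference is that you bound $\|\tau\varphi_1(\tau L_{\kappa,h}^n)\|_{\infty\to\infty}$ as a separate operator estimate (with the substitution $\theta\mapsto\tau-\theta$), whereas the paper estimates the integral applied to $N_{\kappa,h}^n$ directly.
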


\begin{proof}
We proceed by induction. Assume that
\[
\|u_h^n\|_\infty\le\beta.
\]
The induction hypothesis and Lemma~\ref{lem2_1} directly give
\[
\|f(u_h^n)+\kappa u_h^n\|_\infty
\le
\kappa\beta.
\]
Since \(g_h^n>0\) and
\[
N_{\kappa,h}^n(u_h^n,s_h^n)
=
g_h^n\bigl(f(u_h^n)+\kappa u_h^n\bigr),
\]
we consequently have
\begin{equation}
\|N_{\kappa,h}^n(u_h^n,s_h^n)\|_\infty
\le
\kappa\beta g_h^n.
\label{eq_N-ei1-bound}
\end{equation}

Since
$-tL_{\kappa,h}^n=t\varepsilon^2\Delta_h-t\kappa g_h^n I$,
Lemma~\ref{lem2_3}, with
$a=t\varepsilon^2$ and $b=t\kappa g_h^n$, gives
\[
\bigl\|e^{-tL_{\kappa,h}^n}v_h\bigr\|_\infty
\le
e^{-\kappa g_h^n t}\|v_h\|_\infty,
\qquad t\ge0.
\]
Since $L_{\kappa,h}^n$ is symmetric, Lemma~\ref{lem2_2} transfers the
scalar identity
$\tau\varphi_1(\tau z)=\int_0^\tau e^{-(\tau-\theta)z}\,\mathrm d\theta$
to $L_{\kappa,h}^n$. Applying the resulting integral representation of
\eqref{eq_etd1-u}, the above semigroup estimate, and
\eqref{eq_N-ei1-bound}, we obtain
\begin{align*}
\|u_h^{n+1}\|_\infty
&\le
e^{-\tau\kappa g_h^n}\|u_h^n\|_\infty
+
\int_0^\tau
e^{-(\tau-\theta)\kappa g_h^n}
\|N_{\kappa,h}^n(u_h^n,s_h^n)\|_\infty\,\mathrm d\theta
\\
&\le
e^{-\tau\kappa g_h^n}\beta
+
\kappa\beta g_h^n
\int_0^\tau
e^{-(\tau-\theta)\kappa g_h^n}\,\mathrm d\theta
\\
&=
e^{-\tau\kappa g_h^n}\beta
+
\bigl(1-e^{-\tau\kappa g_h^n}\bigr)\beta
=
\beta.
\end{align*}
Thus, \(\|u_h^{n+1}\|_\infty\le\beta\). The conclusion follows by
induction from the initial condition \(\|u_h^0\|_\infty\le\beta\).
\end{proof}
\begin{lemma}[Maximum bound principle for the stabilized GSAV--ETD2 scheme]\label{lem:mbp-gsav-etd2}
Assume that
\[
f(\beta)\le 0,
\qquad
f(-\beta)\ge 0,
\]
and
\[
\kappa\ge \|f'\|_{C[-\beta,\beta]}.
\]
As in Lemma~\ref{lem:mbp-gsav-ei1}, positivity of $g_h$ follows directly
from \eqref{eq:ac_g_definition}.
Let $(u_h^{n,*},s_h^{n,*})$ be the first-order predictor
\eqref{eq_etd1-u}--\eqref{eq_etd1-s}, and let $u_h^{n+1}$ be given by
\eqref{eq_etd2-u}.
Then the scheme preserves the maximum bound principle unconditionally.
More precisely,
\[
\|u_h^0\|_\infty\le\beta
\quad\Longrightarrow\quad
\|u_h^{n,*}\|_\infty\le\beta,
\qquad
\|u_h^{n+1}\|_\infty\le\beta,
\qquad n\ge0.
\]
\end{lemma}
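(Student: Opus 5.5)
The proof will be by induction on $n$. Assume $\|u_h^n\|_\infty\le\beta$. The predictor $u_h^{n,*}$ is exactly one step of the first-order GSAV--EI scheme \eqref{eq_etd1-u} started from $u_h^n$. The argument of Lemma~\ref{lem:mbp-gsav-ei1} therefore applies verbatim and gives $\|u_h^{n,*}\|_\infty\le\beta$; note that this argument uses only $g_h^n>0$, and the value of $s_h^{n,*}$ plays no role. It remains to bound $u_h^{n+1}$.

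The key step is to rewrite \eqref{eq_etd2-u} as a semigroup-weighted average of the two stabilized nonlinear values. Set $N_0:=\bar N_{\kappa,h}^n(u_h^n,s_h^n)=g_h^n f(u_h^n)+\kappa\bar g_h^n u_h^n$ and $N_1:=\bar N_{\kappa,h}^n(u_h^{n,*},s_h^{n,*})=g_h^{n,*}f(u_h^{n,*})+\kappa\bar g_h^n u_h^{n,*}$. Lemma~\ref{lem2_2} transfers the scalar identities $\tau\varphi_1(\tau z)=\int_0^\tau e^{-(\tau-\theta)z}\,\mathrm d\theta$ and $\tau\varphi_2(\tau z)=\int_0^\tau e^{-(\tau-\theta)z}\frac{\theta}{\tau}\,\mathrm d\theta$ to the symmetric operator $A_h^n$. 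This yields
\[
u_h^{n+1}=e^{-\tau A_h^n}u_h^n+\int_0^\tau e^{-(\tau-\theta)A_h^n}\Bigl[\Bigl(1-\frac\theta\tau\Bigr)N_0+\frac\theta\tau N_1\Bigr]\mathrm d\theta .
\]
By Lemma~\ref{lem2_3} with $a=(\tau-\theta)\varepsilon^2$ and $b=(\tau-\theta)\kappa\bar g_h^n$, we have $\|e^{-(\tau-\theta)A_h^n}\|_{\infty\to\infty}\le e^{-(\tau-\theta)\kappa\bar g_h^n}$. The weights $1-\theta/\tau$ and $\theta/\tau$ are nonnegative and sum to one. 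Hence it suffices to show $\|N_0\|_\infty\le\kappa\beta\bar g_h^n$ and $\|N_1\|_\infty\le\kappa\beta\bar g_h^n$, after which the same computation as in Lemma~\ref{lem:mbp-gsav-ei1} gives $\|u_h^{n+1}\|_\infty\le e^{-\tau\kappa\bar g_h^n}\beta+(1-e^{-\tau\kappa\bar g_h^n})\beta=\beta$.

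The main obstacle is this bound on $N_0$ and $N_1$. The reason is that the GSAV factor multiplying $f$ ($g_h^n$ or $g_h^{n,*}$) differs from the stabilization factor $\bar g_h^n$ multiplying $u$, so Lemma~\ref{lem2_1} does not apply directly. I would handle it pointwise. For $\xi=u_h^n$ at a grid node, write $g=g_h^n$ and $\bar g=\bar g_h^n\ge g$, and decompose
\[
g f(\xi)+\kappa\bar g\xi=g\bigl(f(\xi)+\kappa\xi\bigr)+\kappa(\bar g-g)\xi .
\]
By Lemma~\ref{lem2_1} and $|\xi|\le\beta$, the absolute value is at most $g\kappa\beta+\kappa(\bar g-g)\beta=\kappa\beta\bar g$. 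This is exactly why the choice $\bar g_h^n=\max\{g_h^n,g_h^{n,*}\}$ is made: it guarantees $\bar g-g\ge0$ for both stages. The same decomposition with $g=g_h^{n,*}\le\bar g_h^n$ and $\xi=u_h^{n,*}$, which lies in $[-\beta,\beta]$ by the predictor bound already proved, gives the estimate for $N_1$. This closes the induction, and it holds for arbitrary $\tau>0$ because nothing in the argument restricts the step size.
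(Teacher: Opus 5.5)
Your proposal is correct and follows the paper's proof essentially step for step. Both arguments use induction with the predictor bound inherited from Lemma~\ref{lem:mbp-gsav-ei1}, rewrite \eqref{eq_etd2-u} as a semigroup-weighted integral via Lemmas~\ref{lem2_2} and~\ref{lem2_3}, and rely on the stage bound $\|\bar N_{\kappa,h}^n\|_\infty\le\kappa\beta\bar g_h^n$. Your splitting $g f(\xi)+\kappa\bar g\xi=g(f(\xi)+\kappa\xi)+\kappa(\bar g-g)\xi$ is the paper's convex-combination argument with $\rho^\sharp=g_h^\sharp/\bar g_h^n$, multiplied through by $\bar g_h^n$.
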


\begin{proof}
We proceed by induction. Suppose that
\[
\|u_h^n\|_\infty\le\beta.
\]
By Lemma~\ref{lem:mbp-gsav-ei1}, the first-order predictor
\eqref{eq_etd1-u} satisfies
\begin{equation}
\|u_h^{n,*}\|_\infty\le\beta.
\label{eq_predictor-etd2-mbp-bound}
\end{equation}
Indeed, the proof of Lemma~\ref{lem:mbp-gsav-ei1} is a one-step argument, so
it applies here with its output relabeled as $u_h^{n,*}$.

Since $A_h^n$ is symmetric and hence diagonalizable, the scalar integral
identities
\[
\tau\varphi_1(\tau z)
=\int_0^\tau e^{-(\tau-\theta)z}\,\mathrm d\theta,
\qquad
\tau\varphi_2(\tau z)
=\int_0^\tau e^{-(\tau-\theta)z}\frac{\theta}{\tau}\,\mathrm d\theta,
\]
together with Lemma~\ref{lem2_2}, rewrite \eqref{eq_etd2-u} as
\begin{align}
u_h^{n+1}
={}&
e^{-\tau A_h^n}u_h^n
\nonumber\\
&+
\int_0^\tau
e^{-(\tau-\theta)A_h^n}
\left[
\left(1-\frac{\theta}{\tau}\right)
\bar N_{\kappa,h}^n(u_h^n,s_h^n)
+
\frac{\theta}{\tau}
\bar N_{\kappa,h}^n(u_h^{n,*},s_h^{n,*})
\right]\,\mathrm d\theta.
\label{eq_etd2-mbp-integral}
\end{align}

It remains to estimate the nonlinear terms in the correction step.
Let
\[
g_h^\sharp\in\{g_h^n,g_h^{n,*}\}.
\]
Since \(g_h^n>0\), \(g_h^{n,*}>0\), and
\(\bar g_h^n=\max\{g_h^n,g_h^{n,*}\}\), the ratio
\[
\rho^\sharp:=\frac{g_h^\sharp}{\bar g_h^n}
\]
satisfies \(0<\rho^\sharp\le1\). Hence, for the corresponding stage
pair $(v_h^\sharp,s_h^\sharp)\in
\{(u_h^n,s_h^n),(u_h^{n,*},s_h^{n,*})\}$,
\[
\bar N_{\kappa,h}^n(v_h^\sharp,s_h^\sharp)
=
\bar g_h^n
\left(
\rho^\sharp f(v_h^\sharp)+\kappa v_h^\sharp
\right).
\]

For any $\rho\in[0,1]$ and $\xi\in[-\beta,\beta]$, write
\[
\rho f(\xi)+\kappa\xi
=\rho\bigl(f(\xi)+\kappa\xi\bigr)
+(1-\rho)\kappa\xi.
\]
This is a convex combination of two quantities whose absolute values do
not exceed $\kappa\beta$: the first bound follows from
Lemma~\ref{lem2_1}, and the second from $|\xi|\le\beta$. Hence
\begin{equation}
\left|\rho f(\xi)+\kappa\xi\right|
\le
\kappa\beta,
\qquad
\xi\in[-\beta,\beta],\quad \rho\in[0,1].
\label{eq:rho_nonlinearity_etd2_bound}
\end{equation}
Combining \eqref{eq_predictor-etd2-mbp-bound} with
\eqref{eq:rho_nonlinearity_etd2_bound}, we obtain the uniform stage estimates
\begin{equation}
\begin{aligned}
\|\bar N_{\kappa,h}^n(u_h^n,s_h^n)\|_\infty
&\le
\kappa\bar g_h^n\beta,
\\
\|\bar N_{\kappa,h}^n(u_h^{n,*},s_h^{n,*})\|_\infty
&\le
\kappa\bar g_h^n\beta.
\end{aligned}
\label{eq_stage-nonlinearity-etd2-bound}
\end{equation}

Since $-tA_h^n=t\varepsilon^2\Delta_h-t\kappa\bar g_h^n I$,
another application of Lemma~\ref{lem2_3} gives
\[
\bigl\|e^{-tA_h^n}v_h\bigr\|_\infty
\le
e^{-\kappa\bar g_h^n t}\|v_h\|_\infty,
\qquad t\ge0.
\]
Applying this estimate to \eqref{eq_etd2-mbp-integral} and using
\eqref{eq_stage-nonlinearity-etd2-bound}, together with the
nonnegativity of the interpolation weights, yields
\begin{align*}
\|u_h^{n+1}\|_\infty
&\le
e^{-\tau\kappa\bar g_h^n}\|u_h^n\|_\infty
\\
&\quad+
\int_0^\tau
e^{-(\tau-\theta)\kappa\bar g_h^n}
\left[
\left(1-\frac{\theta}{\tau}\right)
\|\bar N_{\kappa,h}^n(u_h^n,s_h^n)\|_\infty
+
\frac{\theta}{\tau}
\|\bar N_{\kappa,h}^n(u_h^{n,*},s_h^{n,*})\|_\infty
\right]\,\mathrm d\theta
\\
&\le
e^{-\tau\kappa\bar g_h^n}\beta
+
\kappa\bar g_h^n\beta
\int_0^\tau
e^{-(\tau-\theta)\kappa\bar g_h^n}
\left[
\left(1-\frac{\theta}{\tau}\right)
+
\frac{\theta}{\tau}
\right]\,\mathrm d\theta
\\
&=
e^{-\tau\kappa\bar g_h^n}\beta
+
\kappa\bar g_h^n\beta
\int_0^\tau
e^{-(\tau-\theta)\kappa\bar g_h^n}\,\mathrm d\theta
\\
&=
e^{-\tau\kappa\bar g_h^n}\beta
+
\bigl(1-e^{-\tau\kappa\bar g_h^n}\bigr)\beta
=
\beta.
\end{align*}
Therefore, \(\|u_h^{n+1}\|_\infty\le\beta\). Starting from
\(\|u_h^0\|_\infty\le\beta\), the asserted bounds for both the predictor
and the corrected solution follow by induction.
\end{proof}
\section{Convergence analysis for the scalar GSAV--ETD2 scheme}
Throughout this section, $(u_h^n,s_h^n)$ denotes the numerical solution of
\eqref{eq_etd1-u}--\eqref{eq_s-correction}, and $(u(t),s(t))$ denotes the
exact solution of \eqref{eq:ac_continuous_gsav}.  The argument is organized
to avoid an error bootstrap: energy dissipation and the MBP first give an
upper bound for the GSAV factors; discrete time regularity then yields a
lower bound for $s_h^n$ and hence a positive lower bound for $g_h^n$; only
after these a priori estimates are available do we derive the local
Lipschitz bounds and close the error recursion.  We assume
\begin{equation}\label{eqa_19}
\begin{aligned}
u&\in L^\infty(0,T;H^4(\Omega))
\cap W^{1,\infty}(0,T;H^4(\Omega))
\cap W^{2,\infty}(0,T;L^2(\Omega)),\\
s&\in W^{2,\infty}(0,T),\qquad
F:=g(u,s)f(u)\in W^{2,\infty}(0,T;L^2(\Omega)).
\end{aligned}
\end{equation}
Throughout this section, $C$ denotes a generic positive constant that may
depend on $T$, the exact solution, $\varepsilon$, $\kappa$, $\sigma$, and the
domain, but is independent of $h$, $\tau$, and the time index.
Let $N_T:=\lfloor T/\tau\rfloor$.  The initialization in
\eqref{eq:ac_initial_data} and the above regularity imply
\[
\sup_h\left(
|\mathcal E_h(u_h^0,s_h^0)|+\|u_h^0\|_{H_h^1}
\right)<\infty .
\]
We also assume the usual second-order consistency of the periodic central
difference Laplacian and quadrature rule.  More precisely, with
\[
\chi_h(t):=\varepsilon^2\bigl(\mathcal I_h \Delta u(t)
-\Delta_h \mathcal I_h u(t)\bigr),
\]
we use
\begin{equation}\label{eq:ac_spatial_consistency}
\|\chi_h\|_{L^\infty(0,T;\ell_h^2)}
+\|\partial_t\chi_h\|_{L^\infty(0,T;\ell_h^2)}\le Ch^2,
\qquad
|E_{1h}(\mathcal I_h u(t))-E_1(u(t))|\le Ch^2.
\end{equation}
The same $O(h^2)$ quadrature consistency is assumed for the scalar inner
products occurring in the auxiliary equation.  These estimates follow
from the displayed regularity on a periodic uniform mesh.  The restriction
operator $\mathcal I_h$ is suppressed only when an exact solution occurs
inside a discrete norm, inner product, or grid operator.  Define
\begin{equation}\label{eq:ac_error_notation}
e_{u,h}^n:=u_h^n-\mathcal I_h u(t_n),\quad
e_{s,h}^n:=s_h^n-s(t_n),\quad
e_{u,h}^{n,*}:=u_h^{n,*}-\mathcal I_h u(t_{n+1}),\quad
e_{s,h}^{n,*}:=s_h^{n,*}-s(t_{n+1}).
\end{equation}
Set
\begin{equation}\label{eq:ac_Y_definition}
\mathcal Y^n:=\|e_{u,h}^n\|_{H_h^1}^2+|e_{s,h}^n|^2.
\end{equation}
\subsection{Preliminary estimates}

\begin{lemma}[Uniform upper bounds for $g_h^n$ and $g_h^{n,*}$]\label{lem:g_upper_ac}
Let the discrete energy law in Theorem~\ref{them3_1} and the maximum bound
principle in Lemma~\ref{lem:mbp-gsav-etd2} hold.  Thus
\[
\|u_h^n\|_\infty\le\beta\quad(0\le n\le N_T),
\qquad
\|u_h^{n,*}\|_\infty\le\beta\quad(0\le n<N_T).
\]
Then there exists a positive constant $G^+$, independent of $\tau$ and $h$,
such that
\[
0<g_h^n\le G^+\quad(0\le n\le N_T),
\qquad
0<g_h^{n,*}\le G^+\quad(0\le n<N_T).
\]
Consequently,
\[
0<\bar g_h^n=\max\{g_h^n,g_h^{n,*}\}\le G^+,
\qquad 0\le n<N_T .
\]
\end{lemma}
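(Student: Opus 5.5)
The plan is to bound the numerator $\sigma(s)$ of $g_h$ from above and its denominator $\sigma(E_{1h})$ from below, separately.

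\emph{Denominator.} By the maximum bound principle (Lemma~\ref{lem:mbp-gsav-etd2}), every stage $u_h^n$ and $u_h^{n,*}$ lies in $\mathcal M_h$. Hence \eqref{eq:bulk_energy_bounds} gives $E_{1h}(u_h^n)\ge -C_*$ and $E_{1h}(u_h^{n,*})\ge -C_*$. Since $\sigma$ is positive and nondecreasing by $(\Sigma_1)$--$(\Sigma_2)$, it follows that
\[
\sigma(E_{1h}(u_h^n))\ge\sigma(-C_*)>0,
\]
and the same holds at the predictor. This lower bound does not depend on $h$ or $\tau$.

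\emph{Numerator.} We need an upper bound on the scalar auxiliary variable. The energy law in Theorem~\ref{them3_1} gives $\mathcal E_h(u_h^{n+1},s_h^{n+1})\le\mathcal E_h(u_h^n,s_h^n)$ at each step. Iterating this from $n=0$ yields $\mathcal E_h(u_h^n,s_h^n)\le\mathcal E_h(u_h^0,s_h^0)$. The first inequality of the same theorem gives
\[
\mathcal E_h(u_h^{n,*},s_h^{n,*})\le\mathcal E_h(u_h^n,s_h^n)\le\mathcal E_h(u_h^0,s_h^0).
\]
Because the gradient term in \eqref{energy_discrete} is nonnegative, we obtain $s_h^n\le\mathcal E_h(u_h^0,s_h^0)$, and the same bound for $s_h^{n,*}$.

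The initial energy $\mathcal E_h(u_h^0,s_h^0)$ must be bounded uniformly in $h$. By \eqref{eq:ac_initial_data},
\[
\mathcal E_h(u_h^0,s_h^0)=\tfrac{\varepsilon^2}{2}\|\nabla_h u_h^0\|_h^2+E_{1h}(u_h^0).
\]
Both terms are bounded uniformly, as already noted after \eqref{eqa_19}: the first by $\sup_h\|u_h^0\|_{H_h^1}<\infty$, the second by \eqref{eq:bulk_energy_bounds}. Call the resulting bound $\mathcal E^0$. Monotonicity of $\sigma$ then gives $\sigma(s_h^n)\le\sigma(\mathcal E^0)$ and $\sigma(s_h^{n,*})\le\sigma(\mathcal E^0)$.

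\emph{Conclusion.} Setting
\[
G^+:=\frac{\sigma(\mathcal E^0)}{\sigma(-C_*)},
\]
we get the two upper bounds. Positivity of $g_h^n$ and $g_h^{n,*}$ comes from $(\Sigma_1)$ and \eqref{eq:ac_g_definition}. The bound on $\bar g_h^n$ follows immediately from its definition as a maximum.

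There is no substantive obstacle here. The only points needing care are to use the energy decay for the predictor as well as for the corrector, and to check that the initial energy bound is uniform in $h$.
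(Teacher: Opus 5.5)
Your proposal is correct and follows essentially the same argument as the paper. Energy decay, applied to both the corrector and the predictor, bounds $s_h^n$ and $s_h^{n,*}$ above by the $h$-uniformly bounded initial energy; the MBP bounds $E_{1h}$ below; and the positivity and monotonicity of $\sigma$ then yield $G^+$. The only difference is cosmetic: the paper combines these bounds into a single symmetric constant $C_E$ with $G^+=\sigma(C_E)/\sigma(-C_E)$, whereas you keep $\mathcal E^0$ and $C_*$ separate.
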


\begin{proof}
We first estimate $g_h^n$. By the discrete energy dissipation law,
\[
\mathcal E_h(u_h^n,s_h^n)
\le
\mathcal E_h(u_h^0,s_h^0),
\qquad 0\le n\le N_T .
\]
Since the modified energy has the form
\[
\mathcal E_h(v_h,r)
=
\frac{\varepsilon^2}{2}\|\nabla_h v_h\|_h^2
+
r,
\]
we immediately obtain
\begin{equation}\label{eq_s_n_upper_ac}
s_h^n
\le
\mathcal E_h(u_h^n,s_h^n)
\le
\mathcal E_h(u_h^0,s_h^0).
\end{equation}
Moreover, from $\|u_h^n\|_\infty\le\beta$ and the definition of
$E_{1h}$, there exists a constant $C_E>0$, independent of $\tau$ and $h$,
such that
\begin{equation}\label{eq_E1_un_upper_n}
-C_E
\le
E_{1h}(u_h^n)
\le
C_E,
\qquad 0\le n\le N_T .
\end{equation}
Enlarging $C_E$ if necessary, the initial-data bound also gives
$\mathcal E_h(u_h^0,s_h^0)\le C_E$.  Therefore,
\[
g_h^n
=
\frac{\sigma(s_h^n)}{\sigma(E_{1h}(u_h^n))}
\le
\frac{\sigma(C_E)}{\sigma(-C_E)}.
\]

Next we estimate $g_h^{n,*}$.  The predictor identity
\eqref{eq_predictor_energy_decay} implies, in particular, that
\[
\mathcal E_h(u_h^{n,*},s_h^{n,*})
\le
\mathcal E_h(u_h^n,s_h^n)
\le
\mathcal E_h(u_h^0,s_h^0).
\]
Since
\[
\mathcal E_h(u_h^{n,*},s_h^{n,*})
=
\frac{\varepsilon^2}{2}\|\nabla_h u_h^{n,*}\|_h^2
+
s_h^{n,*},
\]
we have
\begin{equation}\label{eq_s_star_upper_ac}
s_h^{n,*}
\le
\mathcal E_h(u_h^{n,*},s_h^{n,*})
\le
\mathcal E_h(u_h^0,s_h^0).
\end{equation}
Furthermore, by $\|u_h^{n,*}\|_\infty\le\beta$, the same argument as in
\eqref{eq_E1_un_upper_n} gives
\[
-C_E
\le
E_{1h}(u_h^{n,*})
\le
C_E.
\]
Hence
\[
g_h^{n,*}
=
\frac{\sigma(s_h^{n,*})}{\sigma(E_{1h}(u_h^{n,*}))}
\le
\frac{\sigma(C_E)}{\sigma(-C_E)}.
\]
Taking
\[
G^+
:=
\frac{\sigma(C_E)}{\sigma(-C_E)},
\]
we obtain
\[
0<g_h^n\le G^+,
\qquad
0<g_h^{n,*}\le G^+.
\]
The bound for $\bar g_h^n=\max\{g_h^n,g_h^{n,*}\}$ follows immediately.
\end{proof}
\begin{equation}\label{eq:ac_D_definition}
\mathcal D_s^{n+1}:=
\frac34\left\langle
A_h^n w_h,
w_h
\right\rangle_h
+\frac47\kappa\bar g_h^n
\|d_h\|_h^2\ge0.
\end{equation}
This quantity is exactly the sum of the two additional dissipative terms in
the scalar update \eqref{eq_s-correction}.  Its nonnegativity follows from
the positive definiteness of $A_h^n$ and the positivity of $\bar g_h^n$.
\begin{lemma}[Discrete time regularity]\label{lem3_2}
Assume the maximum bound principle in
Lemma~\ref{lem:mbp-gsav-etd2} and the upper scalar bound in
Lemma~\ref{lem:g_upper_ac}.  Then there exists $\tau_0>0$, independent of
$h$, such that for $0<\tau\le\tau_0$ there
exists $C_T>0$, independent of $h$ and $\tau$, such that
\begin{equation}\label{eq_ac-uniform-bound}
\max_{0\le k\le N_T}\|u_h^k\|_{H_h^1}^2
+\tau\sum_{k=0}^{N_T-1}\|\delta_\tau u_h^{k+1}\|_h^2
+\sum_{k=0}^{N_T-1}\mathcal D_s^{k+1}
\le C_T,
\end{equation}
where $\mathcal D_s^{k+1}$ is defined by
\eqref{eq:ac_D_definition}.
\end{lemma}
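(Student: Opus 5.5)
The plan is to combine three ingredients. The MBP controls the $\ell^2_h$ part of the $H_h^1$ norm. The energy identity \eqref{eq:energy_difference_Bh}, sharpened so that it yields \emph{quantitative} dissipation, controls the increments. The explicit scalar recursion \eqref{eq_s-correction} bounds $s_h^n$ from below without any a priori lower bound on $g_h^n$.

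\textbf{Step 1: $\ell^2_h$ bound.} By Lemma~\ref{lem:mbp-gsav-etd2}, $\|u_h^k\|_h\le\beta|\Omega|^{1/2}$, so it suffices to bound $\|\nabla_h u_h^k\|_h$. The energy law gives
\[
\frac{\varepsilon^2}{2}\|\nabla_h u_h^k\|_h^2\le\mathcal E_h(u_h^0,s_h^0)-s_h^k,
\]
so the whole question reduces to a lower bound for $s_h^k$.

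\textbf{Step 2: lower bound for $s_h^k$ via the scalar recursion.} Summing \eqref{eq_s-correction} gives
\[
s_h^k=s_h^0-\frac12\sum_{m<k}\langle F_h^m+F_h^{m,*},\delta u_h^{m+1}\rangle_h-\sum_{m<k}\mathcal D_s^{m+1}.
\]
By the MBP and Lemma~\ref{lem:g_upper_ac}, $\|F_h^m\|_\infty,\|F_h^{m,*}\|_\infty\le G^+\|f\|_{C[-\beta,\beta]}$. Hence $|\langle F_h^m+F_h^{m,*},\delta u_h^{m+1}\rangle_h|\le C\|\delta u_h^{m+1}\|_h$. Young's inequality then gives
\[
\sum_m C\|\delta u_h^{m+1}\|_h\le \frac{\gamma}{2}\,\tau\sum_m\|\delta_\tau u_h^{m+1}\|_h^2+C_\gamma T .
\]
Thus everything hinges on controlling $\tau\sum\|\delta_\tau u_h^{m+1}\|_h^2$ and $\sum\mathcal D_s^{m+1}$ by the total energy drop $\mathcal E_h^0-\mathcal E_h^k$.

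\textbf{Step 3: quantitative dissipation (main obstacle).} I would revisit the modal matrix $M(z,\alpha)$ in \eqref{eq:corrector_energy_matrix} and prove a strengthened inequality of the form
\[
\mathbf X^TM(z,\alpha)\mathbf X\ \ge\ c_0\,(X_1+X_2)^2+c_1\Bigl(\tfrac z4X_2^2+\tfrac{4\alpha}{7}X_3^2\Bigr)\,\theta(z),
\]
with $c_0,c_1>0$ uniform in $0\le\alpha\le z$. Here $\theta$ is a bounded weight to be determined. This works by the same convexity-in-$\alpha$ reduction to $\alpha\in\{0,z\}$, using the slack visible in the proof of Theorem~\ref{them3_1}: $D_0\ge5t^2$, the determinant bound $\frac{59}{16}t^2$ in \eqref{eq:Mzz_remaining_determinant}, and $P_0\ge1$. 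Since $v_h+w_h=\delta u_h^{n+1}$, this gives
\[
\mathcal E_h^{n}-\mathcal E_h^{n+1}\ge c_0\tau\|\delta_\tau u_h^{n+1}\|_h^2+c_1'\,\mathcal D_s^{n+1}-R^n,
\]
where $R^n$ is the part of $\mathcal D_s^{n+1}$ not dominated when $z$ is small. The crux is that the $\mathcal D_s$ terms enter the $s$-recursion in Step~2 with coefficient $1$. So the dissipation must absorb them with some margin, and I expect the exact constants $3/4$ and $4/7$ were chosen to leave this margin. If a uniform margin fails near $z\to0$, the remedy I would try next uses the mismatch structure. Subtracting \eqref{eq_etd1-predictor-be} from \eqref{eq:auxiliary_predictor_reformulated} expresses $d_h$ in terms of $\kappa(\bar g_h^n-g_h^n)$ times bounded quantities, which are $O(\|\delta^{\rm p} u_h^{n,*}\|_h)$-small after a further Lipschitz estimate of $g_h$ on $\mathcal M_h$ (available from the MBP and Lemma~\ref{lem:g_upper_ac}). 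Similarly, \eqref{eq:corrector_increment_identity} bounds $\langle A_h^nw_h,w_h\rangle_h$ by $C\tau(\|v_h\|_h^2+\|d_h\|_h^2+\|F_h^{n,*}-F_h^n\|_h^2)$. These give $R^n\le C\tau\bigl(\|\delta u_h^{n+1}\|_h^2+\|\delta^{\rm p}u_h^{n,*}\|_h^2\bigr)$, absorbable for $\tau\le\tau_0$. This is where the restriction $\tau\le\tau_0$ in the statement should enter.

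\textbf{Step 4: closing.} Summing the dissipation inequality up to $k$ and inserting Step~2 with $\gamma$ small gives
\[
\frac{\varepsilon^2}{2}\|\nabla_h u_h^k\|_h^2+c\,\tau\sum_{m<k}\|\delta_\tau u_h^{m+1}\|_h^2+c\sum_{m<k}\mathcal D_s^{m+1}\le \mathcal E_h^0-s_h^0+C T,
\]
which is bounded uniformly in $h$ by the initial-data assumption. Together with Step~1, this yields \eqref{eq_ac-uniform-bound}.
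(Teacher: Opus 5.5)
\paragraph{Step~3 fails as planned.}
The margin you need does not exist. Modewise, $\tau\mathcal D_s^{n+1}=\sum_j\bigl(\tfrac34 z_jw_j^2+\tfrac{4\alpha}{7}d_j^2\bigr)$. So the $(3,3)$ entry $\tfrac{4\alpha}{7}$ of $M(z,\alpha)$ in \eqref{eq:corrector_energy_matrix} is \emph{exactly} $\tau$ times the $d$-part of $\mathcal D_s^{n+1}$. Subtracting $c_1'\ge1$ times it leaves a nonpositive diagonal entry next to the off-diagonal entries $-\alpha/4\neq0$. Hence no modal inequality $M(z,\alpha)\succeq c_1'\,\mathrm{diag}(0,\tfrac34 z,\tfrac{4\alpha}{7})$ with $c_1'\ge1$ can hold for any $\alpha>0$.

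The $w$-part also fails, and at the opposite end from where you expected. Since $\bar\varphi_2(z)=z+1+O(1/z)$, for $z\gg1+\alpha$ (the high-frequency modes) the $w$-margin is only about $\tfrac12(1+\alpha)$. The $(v,w)$ coupling is about $\tfrac z4$, so the $(v,w)$ block of $M-\mathrm{diag}(0,\tfrac34z,\tfrac{4\alpha}7)$ has determinant about $-\tfrac1{16}(z-1-\alpha)^2<0$. The constants $3/4$ and $4/7$ leave no margin; they are used up in Theorem~\ref{them3_1}.

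Your fallback is not absorbable as written either. The bound $R^n\le C\tau(\|\delta u_h^{n+1}\|_h^2+\|\delta^{\rm p}u_h^{n,*}\|_h^2)$ contains the \emph{predictor} increment, which the telescoping corrector dissipation does not see. Moreover, a Lipschitz bound on $g_h^{n,*}-g_h^n$ in terms of $\|\delta^{\rm p}u_h^{n,*}\|_h$ needs $\sigma'$ bounded on the still-uncontrolled range of $s_h$. That is circular, since the MBP only gives Lipschitz continuity in the grid-function argument.

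\paragraph{How to repair it.}
Do not absorb $\mathcal D_s$; bound it directly, as the paper does.
\begin{itemize}
\item By the MBP and Lemma~\ref{lem:g_upper_ac}, $\|W_h^n\|_h\le C$. Then $w_h=\tau\varphi_2(\tau A_h^n)W_h^n$ gives $\langle A_h^nw_h,w_h\rangle_h\le\tau\sup_{z\ge0}z\varphi_2(z)^2\,\|W_h^n\|_h^2\le C\tau$.
\item $\delta^{\rm p}u_h^{n,*}$ and $v_h$ are $\tau\varphi_1(\tau L_{\kappa,h}^n)$ and $\tau\varphi_1(\tau A_h^n)$ applied to the same vector $F_h^n+\varepsilon^2\Delta_hu_h^n$. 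Hence, modewise, $|d_j|\le\bigl(1-e^{-\tau\kappa(\bar g_h^n-g_h^n)}\bigr)|(\delta^{\rm p}u_h^{n,*})_j|$. With the MBP this gives $\|d_h\|_h\le C\tau$, without any Lipschitz estimate.
\end{itemize}
Thus $\sum_k\mathcal D_s^{k+1}\le CT$, and the coefficient $1$ in your $s$-recursion is harmless.

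The only dissipation you then need is $\mathbf X^TM(z,\alpha)\mathbf X\ge\tfrac12(X_1+X_2)^2$, and this does hold. Set $S=X_1+X_2$.
\begin{itemize}
\item At $\alpha=0$ the form equals $P_0S^2+(b-2t)SX_2+3tX_2^2$.
\item At $\alpha=z$ the form is \eqref{eq:energy_matrix_endpoint_square}.
\item Since $P_0=\psi_1(z)\ge\max\{1,2t\}$, both $2\times2$ determinants stay nonnegative after replacing $P_0$ by $P_0-\tfrac12$.
\item Convexity in $\alpha$ finishes the argument.
\end{itemize}
With these two facts your Steps~1, 2 and~4 close, in fact for every $\tau>0$.

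\paragraph{Comparison with the paper.}
The paper's proof is shorter and never uses the energy law or the $s$-recursion. It applies $\tau^{-1}\bar\varphi_1(\tau A_h^n)$ to \eqref{eq_etd2-u} and tests with $2\delta_\tau u_h^{n+1}$. It bounds the right-hand side minus $\kappa\bar g_h^nu_h^{n+1}$ by the MBP. For coercivity it uses $2\psi_1(\tau A_h^n)-\tau\kappa\bar g_h^nI\succeq I$, valid when $\tau\kappa G^+\le1$. The lower bound on $s_h$ (Lemma~\ref{lem:ac_s_lower_bound}) then follows from this lemma. Your repaired route removes that step-size restriction, at the cost of the refined modal inequality.
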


\begin{proof}
For the nonlinear difference in the correction stage, write
\[
W_h^n:=\bar N_{\kappa,h}^n(u_h^{n,*},s_h^{n,*})-
\bar N_{\kappa,h}^n(u_h^n,s_h^n).
\]
The MBP and Lemma~\ref{lem:g_upper_ac} imply
\begin{equation}\label{eq:ac_coarse_F_W_bound}
\|F_h^n\|_h+\|F_h^{n,*}\|_h+\|W_h^n\|_h\le C,
\qquad 0\le n<N_T.
\end{equation}
Indeed, $f$ is bounded on $[-\beta,\beta]$, the discrete $L^2$ norm of an
$L^\infty$-bounded grid function is controlled by the fixed domain volume,
and all GSAV factors are bounded above by $G^+$.  Applying
$\tau^{-1}\bar\varphi_1(\tau A_h^n)$ to \eqref{eq_etd2-u} and using
$e^{-z}\bar\varphi_1(z)=\psi(z)$ gives the transformed ETD2 equation
\begin{align}
&\psi(\tau A_h^n)\delta_\tau u_h^{n+1}
+A_h^n u_h^{n+1}
\nonumber\\
&\quad={}
\left(I-\frac{\varphi_2(\tau A_h^n)}
{\varphi_1(\tau A_h^n)}\right)
\bar N_{\kappa,h}^n(u_h^n,s_h^n)
+\frac{\varphi_2(\tau A_h^n)}
{\varphi_1(\tau A_h^n)}
\bar N_{\kappa,h}^n(u_h^{n,*},s_h^{n,*}).
\label{eq:ac_rk2_regularity_equation}
\end{align}
Since Lemma~\ref{lem_phi_ineq} gives
$\frac12I\preceq
\varphi_2(\tau A_h^n)/
\varphi_1(\tau A_h^n)\preceq I$, the MBP and the scalar upper
bound imply that both interpolation operators have uniformly bounded
$\ell_h^2$ operator norm.  Hence the $\ell_h^2$ norm of
\[
\left(I-\frac{\varphi_2(\tau A_h^n)}
{\varphi_1(\tau A_h^n)}\right)
\bar N_{\kappa,h}^n(u_h^n,s_h^n)
+\frac{\varphi_2(\tau A_h^n)}
{\varphi_1(\tau A_h^n)}
\bar N_{\kappa,h}^n(u_h^{n,*},s_h^{n,*})
-\kappa\bar g_h^n u_h^{n+1}
\]
is bounded by a constant independent of $h$ and $\tau$.  Move
$\kappa\bar g_h^n u_h^{n+1}$ to the right of
\eqref{eq:ac_rk2_regularity_equation} and take the inner product with
$2\delta_\tau u_h^{n+1}$.  The polarization identity and
$\psi_1(z)=\psi(z)+z/2\ge1$ give
\begin{align}
&\frac{\varepsilon^2}{\tau}
\left(\|\nabla_h u_h^{n+1}\|_h^2-\|\nabla_h u_h^n\|_h^2\right)
+\bigl(2-\tau\kappa\bar g_h^n\bigr)
\|\delta_\tau u_h^{n+1}\|_h^2
\nonumber\\
&\qquad\le C\|\delta_\tau u_h^{n+1}\|_h .
\end{align}
The coercive coefficient follows from the operator identity
\[
2\psi(\tau A_h^n)+\tau(-\varepsilon^2\Delta_h)
=2\psi_1(\tau A_h^n)-\tau\kappa\bar g_h^n I
\succeq (2-\tau\kappa G^+)I.
\]
Thus, if $\tau\le1$ and $\tau\kappa G^+\le1$, Young's inequality yields
\begin{equation}\label{eq:ac_rk2_time_regularity_step}
\varepsilon^2\left(
\|\nabla_h u_h^{n+1}\|_h^2-\|\nabla_h u_h^n\|_h^2\right)
+c\tau\|\delta_\tau u_h^{n+1}\|_h^2\le C\tau,
\end{equation}
Summing \eqref{eq:ac_rk2_time_regularity_step}, and using the MBP for
the $L^2$ part of the $H_h^1$ norm, proves the first two bounds in
\eqref{eq_ac-uniform-bound}.

Moreover,
\begin{equation}\label{eq:ac_correction_increment_coarse}
w_h
=\tau\varphi_2(\tau A_h^n)W_h^n,
\quad
\|w_h\|_h\le C\tau,
\quad
\left\langle A_h^n w_h,
w_h\right\rangle_h
\le C\tau\|W_h^n\|_h^2.
\end{equation}
The last estimate follows mode by mode from
$\sup_{z\ge0}z\varphi_2(z)^2<\infty$ after substituting
$w_h=\tau\varphi_2(\tau A_h^n)W_h^n$.
It remains to sum the correction defect.  The first term satisfies, by
\eqref{eq:ac_correction_increment_coarse},
\[
\left\langle A_h^n w_h,
w_h\right\rangle_h
\le C\tau\|W_h^n\|_h^2\le C\tau.
\]
The two predictor representations also give
\[
d_h=\int_0^\tau e^{-rL_{\kappa,h}^n}
\bigl(I-e^{-r\kappa(\bar g_h^n-g_h^n)}\bigr)
\bigl(F_h^n+\varepsilon^2\Delta_h u_h^n\bigr)\,\mathrm dr .
\]
Define $\|v_h\|_{-1,h}:=\|(I-\Delta_h)^{-1/2}v_h\|_h$.  The preceding
$H_h^1$ bound gives
\[
\|F_h^n+\varepsilon^2\Delta_h u_h^n\|_{-1,h}\le C.
\]
Moreover, spectral calculus yields, for $0<r\le1$,
\[
\|e^{-rL_{\kappa,h}^n}v_h\|_h\le Cr^{-1/2}\|v_h\|_{-1,h},
\]
because
$\sup_{\lambda\ge0}(1+\lambda)^{1/2}
e^{-r(\kappa g_h^n+\varepsilon^2\lambda)}\le Cr^{-1/2}$.
Together with $1-e^{-r\kappa(\bar g_h^n-g_h^n)}\le Cr$, this gives
$\|d_h\|_h\le C\int_0^\tau r^{1/2}\,\mathrm dr
\le C\tau^{3/2}$.  Hence
\[
\sum_{n=0}^{N_T-1}\mathcal D_s^{n+1}
\le C\sum_{n=0}^{N_T-1}(\tau+\tau^3)\le C_T,
\]
which completes the proof.
\end{proof}
\begin{lemma}[Lower bound for $s_h^n$]\label{lem:ac_s_lower_bound}
Let $(u_h^n,s_h^n)$ be generated by the stabilized GSAV--ETD2 scheme.
Under the hypotheses of Lemmas~\ref{lem:mbp-gsav-etd2},
\ref{lem:g_upper_ac}, and \ref{lem3_2}, there exists a constant $C_s>0$,
independent of $h$ and $\tau$, such that
\begin{equation}\label{eq_ac_s_lower_bound}
 s_h^n\ge-C_s,\qquad 0\le n\le N_T.
\end{equation}
\end{lemma}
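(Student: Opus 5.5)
The plan is to telescope the scalar update \eqref{eq_s-correction} from $s_h^0$ and to bound every increment using only the a priori quantities already controlled in Lemma~\ref{lem3_2}. Summing \eqref{eq_s-correction} over $k=0,\dots,n-1$ gives
\[
s_h^n
= s_h^0
-\frac12\sum_{k=0}^{n-1}\left\langle F_h^k+F_h^{k,*},\delta u_h^{k+1}\right\rangle_h
-\sum_{k=0}^{n-1}\mathcal D_s^{k+1},
\]
where $\mathcal D_s^{k+1}$ is exactly the quantity \eqref{eq:ac_D_definition}. The initial value satisfies $s_h^0=E_{1h}(u_h^0)\ge -C_*$ by \eqref{eq:bulk_energy_bounds}, because $u_h^0\in\mathcal M_h$.

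The dissipative sum is handled directly: Lemma~\ref{lem3_2} gives $\sum_{k}\mathcal D_s^{k+1}\le C_T$. For the nonlinear work term I will use the coarse bound \eqref{eq:ac_coarse_F_W_bound}, $\|F_h^k\|_h+\|F_h^{k,*}\|_h\le C$. This bound comes from the MBP of Lemma~\ref{lem:mbp-gsav-etd2}, the boundedness of $f$ on $[-\beta,\beta]$, and the upper bound $G^+$ of Lemma~\ref{lem:g_upper_ac}. With it, Cauchy--Schwarz and then the discrete Cauchy--Schwarz inequality in time give
\[
\left|\frac12\sum_{k=0}^{n-1}\left\langle F_h^k+F_h^{k,*},\delta u_h^{k+1}\right\rangle_h\right|
\le C\sum_{k=0}^{n-1}\tau\|\delta_\tau u_h^{k+1}\|_h
\le C\,T^{1/2}\left(\tau\sum_{k=0}^{N_T-1}\|\delta_\tau u_h^{k+1}\|_h^2\right)^{1/2}
\le C\,(T C_T)^{1/2},
\]
again by Lemma~\ref{lem3_2}. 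Combining the three pieces yields $s_h^n\ge -C_*-C(TC_T)^{1/2}-C_T=:-C_s$ for $0\le n\le N_T$, with $C_s$ independent of $h$ and $\tau$.

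The main obstacle is to make sure the constants are genuinely non-circular. Lemma~\ref{lem3_2} uses only the upper bound on $g$, and no lower bound on $g$ or $s$, so there is no circularity. Its time-step restriction $\tau\le\tau_0$ is inherited, so the lemma is asserted for the same range of $\tau$. I would also check that $F_h^{k,*}$ is bounded uniformly. This requires $g_h^{k,*}\le G^+$, which Lemma~\ref{lem:g_upper_ac} provides, and $\|u_h^{k,*}\|_\infty\le\beta$, which Lemma~\ref{lem:mbp-gsav-etd2} provides. Given these, the argument needs no further estimates.
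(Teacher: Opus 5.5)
Your proposal is correct and is essentially the paper's own proof. You telescope the scalar update \eqref{eq_s-correction} into $s_h^0$ minus the nonlinear work minus $\sum_k\mathcal D_s^{k+1}$. You bound the work term by Cauchy--Schwarz, using the MBP/$G^+$ bound on $\|F_h^k\|_h+\|F_h^{k,*}\|_h$ together with the $\tau\sum_k\|\delta_\tau u_h^{k+1}\|_h^2$ estimate of Lemma~\ref{lem3_2}, and you bound the dissipative sum by the same lemma. The only difference is cosmetic: you bound $s_h^0=E_{1h}(u_h^0)\ge -C_*$ via $u_h^0\in\mathcal M_h$, whereas the paper simply uses $\sup_h|s_h^0|<\infty$ from the initialization.
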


\begin{proof}
For clarity, write $F_h^{k,*}:=g_h^{k,*}f(u_h^{k,*})$ and use the
step-indexed predictor notation in \eqref{eq:ac_D_definition}.
Summing the ETD2 auxiliary-variable update \eqref{eq_s-correction} gives
for $0\le n\le N_T-1$
\begin{equation}\label{eq_ac_s_recursive_lower}
 s_h^{n+1}=s_h^0-\frac\tau2\sum_{k=0}^{n}
\left\langle F_h^k+F_h^{k,*},\delta_\tau u_h^{k+1}\right\rangle_h
-\sum_{k=0}^{n}\mathcal D_s^{k+1}.
\end{equation}
The MBP and Lemma~\ref{lem:g_upper_ac} imply
$\|F_h^k\|_h+\|F_h^{k,*}\|_h\le C_F$.  Hence Cauchy--Schwarz and
Lemma~\ref{lem3_2} give
\begin{align}
\frac\tau2\sum_{k=0}^{n}
\left|\left\langle F_h^k+F_h^{k,*},
\delta_\tau u_h^{k+1}\right\rangle_h\right|
&\le C_F\sqrt{T}
\left(\tau\sum_{k=0}^{n}
\|\delta_\tau u_h^{k+1}\|_h^2\right)^{1/2}
\le C_T.
\label{eq_ac_s_cauchy_bound}
\end{align}
The same lemma yields
$\sum_{k=0}^{n}\mathcal D_s^{k+1}\le C_T$.  Substitution in
\eqref{eq_ac_s_recursive_lower} gives
$s_h^{n+1}\ge s_h^0-C_T$.  Taking
$C_s:=\sup_h|s_h^0|+C_T$ proves \eqref{eq_ac_s_lower_bound}; here
$\sup_h|s_h^0|<\infty$ follows from \eqref{eq:ac_initial_data} and the
initial-data bound above.  The case $n=0$ is included directly.
\end{proof}
Combining \eqref{eq_ac_s_lower_bound} with the MBP bound
$E_{1h}(u_h^n)\le C_E$ gives
\begin{equation}\label{eq:ac_g_lower_from_s}
g_h^n=\frac{\sigma(s_h^n)}{\sigma(E_{1h}(u_h^n))}
\ge \frac{\sigma(-C_s)}{\sigma(C_E)}=:G_->0.
\end{equation}
The predictor update, the MBP, and the upper bound for $g_h^n$ give
\[
|s_h^{n,*}-s_h^n|
=\left|\left\langle F_h^n,u_h^{n,*}-u_h^n\right\rangle_h\right|
\le C.
\]
Together with the upper energy bound and
Lemma~\ref{lem:ac_s_lower_bound}, this yields
\begin{equation}\label{eq:ac_coarse_numerical_bounds_after_s}
\begin{aligned}
|s_h^n|+\|u_h^n\|_{H_h^1}&\le C,
&&0\le n\le N_T,\\
|s_h^{n,*}|&\le C,
&&0\le n<N_T.
\end{aligned}
\end{equation}
The MBP also bounds $E_{1h}(u_h^{n,*})$ on the same compact interval.
Therefore, after decreasing $G_-$ if necessary, the positivity and
monotonicity of $\sigma$ give the two-sided stagewise bounds
\begin{equation}\label{eq:ac_g_two_sided}
0<G_-\le g_h^n\le\bar g_h^n\le G^+,
\qquad
G_-\le g_h^{n,*}\le G^+,
\qquad 0\le n<N_T.
\end{equation}
These lower bounds are used below only to obtain mesh-independent coercivity
for inverse operators; the energy and MBP results themselves require only
positivity of the GSAV factors.
\subsection{Local Lipschitz continuity and error estimates}
We first introduce the continuous counterpart of the nonlinear term for the
Allen--Cahn equation.  On $[t_n,t_{n+1}]$, the corrector freezes $\bar g_h^n$.
For every $t\in[t_n,t_{n+1}]$, set
\[
g(t):=g(u(t),s(t)),
\qquad
F(t):=g(t)f(u(t)),
\qquad
\mathcal N(t):=F(t)+\kappa\bar g_h^n u(t).
\]
The dependence of $\mathcal N$ on the step index $n$ through the frozen
factor $\bar g_h^n$ is suppressed to keep the notation readable.
Using $F_h^n$ and $F_h^{n,*}$ from the scheme definition, set
\[
\mathcal N_h^n:=\bar N_{\kappa,h}^n(u_h^n,s_h^n)
=F_h^n+\kappa\bar g_h^n u_h^n,
\qquad
\mathcal N_h^{n,*}:=\bar N_{\kappa,h}^n(u_h^{n,*},s_h^{n,*})
=F_h^{n,*}+\kappa\bar g_h^n u_h^{n,*} .
\]

Substituting the exact solution into the ETD2 formulation and using
linear interpolation for the nonlinear term, we obtain
\begin{align}
u(t_{n+1})
={}&
e^{-\tau A_h^n}u(t_n)
+
\int_0^\tau
e^{-(\tau-\theta)A_h^n}
\left[
\left(1-\frac{\theta}{\tau}\right)\mathcal N(t_n)
+
\frac{\theta}{\tau}\mathcal N(t_{n+1})
+
T_{u,h}^n(\theta)
\right],\mathrm d\theta ,
\label{eq_ac_exact_etd2}
\\
\delta_\tau s(t_{n+1})
={}&
-
\left\langle
\frac12\left(F(t_n)+F(t_{n+1})\right),
\delta_\tau u(t_{n+1})
\right\rangle_h
+
T_{s,h}^n ,
\label{eq_ac_exact_s}
\end{align}
where
\[
\delta_\tau u(t_{n+1})
=
\frac{u(t_{n+1})-u(t_n)}{\tau}.
\]
Applying the transformation
\[
\frac1\tau\left(\psi(\tau A_h^n)
+\tau A_h^n\right)
\]
to \eqref{eq_ac_exact_etd2} gives
\begin{align}
\psi(\tau A_h^n)\delta_\tau u(t_{n+1})
+
A_h^n u(t_{n+1})
={}&
\mathcal N(t_n)
+
\frac{\varphi_2(\tau A_h^n)}
{\varphi_1(\tau A_h^n)}
\left(
\mathcal N(t_{n+1})-\mathcal N(t_n)
\right)
+
\widetilde T_{u,h}^n ,
\label{eq_ac_exact_transformed}
\end{align}
where
\[
\widetilde T_{u,h}^n
:=
\frac1\tau
\left(\psi(\tau A_h^n)+\tau A_h^n\right)
\int_0^\tau
e^{-(\tau-\theta)A_h^n}
T_{u,h}^n(\theta)\,\mathrm d\theta .
\]
For completeness, the residuals are
\begin{align}
T_{u,h}^n(\theta)
&:=\mathcal N(t_n+\theta)
-\left(1-\frac\theta\tau\right)\mathcal N(t_n)
-\frac\theta\tau \mathcal N(t_{n+1})
+\chi_h(t_n+\theta),
\label{eq:ac_Tu_definition}\\
T_{s,h}^n
&:=\delta_\tau s(t_{n+1})
+\left\langle\frac{F(t_n)+F(t_{n+1})}{2},
\delta_\tau u(t_{n+1})\right\rangle_h .
\label{eq:ac_Ts_definition}
\end{align}
Taylor's formula for linear interpolation gives, for
$0\le\theta\le\tau$,
\begin{align*}
T_{u,h}^n(\theta)
={}&-\frac{\tau-\theta}{\tau}
\int_0^\theta r\mathcal N_{tt}(t_n+r)\,\mathrm dr
-\frac\theta\tau\int_\theta^\tau(\tau-r)\mathcal N_{tt}(t_n+r)\,\mathrm dr\\
&+\chi_h(t_n+\theta).
\end{align*}
Here $\bar g_h^n$ is constant on the time slab and bounded by $G^+$, so
the regularity assumption \eqref{eqa_19} implies a uniform $L^2$ bound for
$\mathcal N_{tt}$.
Hence $\|T_{u,h}^n(0)\|_h\le Ch^2$ and
$\|\partial_\theta T_{u,h}^n(\theta)\|_h\le C(\tau+h^2)$.  If
\[
\mathcal K_n(\theta)
:=\tau^{-1}\bigl(\psi(\tau A_h^n)
+\tau A_h^n\bigr)
e^{-(\tau-\theta)A_h^n},
\]
then spectral calculus gives
$\int_0^\tau\mathcal K_n(\theta)\,\mathrm d\theta=I$ and
$\|\int_r^\tau\mathcal K_n(\theta)\,\mathrm d\theta\|_{h\to h}\le1$.
Consequently,
Fubini's theorem yields
\[
\widetilde T_{u,h}^n=T_{u,h}^n(0)+
\int_0^\tau\left(\int_r^\tau\mathcal K_n(\theta)\,\mathrm d\theta\right)
\partial_rT_{u,h}^n(r)\,\mathrm dr,
\]
and therefore $\|\widetilde T_{u,h}^n\|_h\le C(\tau^2+h^2)$ without a
restriction coupling $\tau$ and $h$.  To make the scalar residual explicit,
put
\[
\bar F^n:=\frac{F(t_n)+F(t_{n+1})}{2},\qquad
\bar u_t^n:=\delta_\tau u(t_{n+1}),
\]
and denote time averages over $[t_n,t_{n+1}]$ by an overline, namely,
$\overline q:=\tau^{-1}\int_{t_n}^{t_{n+1}}q(t)\,\mathrm dt$.  Since
$s_t=-(F,u_t)$, apart from the $O(h^2)$ quadrature defect,
\begin{equation}\label{eq:ac_Ts_covariance}
T_{s,h}^n
=\bigl\langle\bar F^n-\overline F,\bar u_t^n\bigr\rangle_h
-\overline{\bigl\langle F-\overline F,
u_t-\bar u_t^n\bigr\rangle_h}+R_{s,h}^n,
\qquad |R_{s,h}^n|\le Ch^2.
\end{equation}
The endpoint trapezoidal error in the first term is $O(\tau^2)$, while
each centered factor in the second term is $O(\tau)$ in $\ell_h^2$.
Taylor's formula and periodic quadrature consistency therefore give
$|T_{s,h}^n|\le C(\tau^2+h^2)$.  Thus
\begin{equation}\label{eq:ac_residual_bound}
\|\widetilde T_{u,h}^n\|_h+|T_{s,h}^n|
\le
C(\tau^2+h^2).
\end{equation}
These two bounds quantify the local consistency error of the transformed
field equation and the scalar update, respectively.  They are stated in the
same norms used by the stability estimate below, so no inverse inequality is
needed when the local defects enter the global error recursion.

Recall the error notation in \eqref{eq:ac_error_notation}.
Subtracting \eqref{eq_ac_exact_transformed} from the numerical ETD2
equation gives
\begin{align}
\psi(\tau A_h^n)\delta_\tau e_{u,h}^{n+1}
+
A_h^n e_{u,h}^{n+1}
={}&
\left(I-
\frac{\varphi_2(\tau A_h^n)}
{\varphi_1(\tau A_h^n)}\right)
\left(\mathcal N_h^n-\mathcal N(t_n)\right)
+
\frac{\varphi_2(\tau A_h^n)}
{\varphi_1(\tau A_h^n)}
\left(\mathcal N_h^{n,*}-\mathcal N(t_{n+1})\right)
-
\widetilde T_{u,h}^n .
\label{eq_ac_error_u}
\end{align}

Moreover, since the update of $s_h^{n+1}$ can be written as
\[
s_h^{n+1}
=
s_h^n
-
\frac12
\left\langle
F_h^n+F_h^{n,*},
u_h^{n+1}-u_h^n
\right\rangle_h
-
\mathcal D_s^{n+1},
\]
where $\mathcal D_s^{n+1}$ is defined in
\eqref{eq:ac_D_definition}.  Subtracting \eqref{eq_ac_exact_s} yields
\begin{align}
\delta_\tau e_{s,h}^{n+1}
={}&
-
\left\langle
\frac12(F_h^n+F_h^{n,*}),
\delta_\tau e_{u,h}^{n+1}
\right\rangle_h
\nonumber\\
&-
\left\langle
\frac12
\left[
F_h^n-F(t_n)+F_h^{n,*}-F(t_{n+1})
\right],
\delta_\tau u(t_{n+1})
\right\rangle_h
-
T_{s,h}^n
-
\frac1\tau\mathcal D_s^{n+1}.
\label{eq_ac_error_s}
\end{align}
To close these coupled error equations, we first record uniform Lipschitz
bounds for the nonlinear quantities, then propagate the nodal error through
the predictor, and finally estimate the additional correction defect.

\begin{lemma}[Local Lipschitz estimates]\label{lem:ac_lipschitz}
Assume the uniform scalar bounds \eqref{eq:ac_g_two_sided}.  Suppose that
the exact solution is sufficiently regular,
$\|u(t)\|_{L^\infty(\Omega)}\le\beta$ for $0\le t\le T$, and that the
maximum bound principle holds for the numerical stages:
\[
\|u_h^n\|_\infty\le\beta,
\qquad
\|u_h^{n,*}\|_\infty\le\beta .
\]
Then there exists a constant $C>0$, independent of $\tau$ and $h$, such
that
\begin{align}
|g_h^n-g(t_n)|
&\le
C\left(
|e_{s,h}^n|+\|e_{u,h}^n\|_h+h^2
\right),
\label{eq_ac_g_lip_n}
\\
|g_h^{n,*}-g(t_{n+1})|
&\le
C\left(
|e_{s,h}^{n,*}|+\|e_{u,h}^{n,*}\|_h+h^2
\right),
\label{eq_ac_g_lip_star}
\\
\|F_h^n-F(t_n)\|_h
&\le
C\left(
|e_{s,h}^n|+\|e_{u,h}^n\|_h+h^2
\right),
\label{eq_ac_F_lip_n}
\\
\|F_h^{n,*}-F(t_{n+1})\|_h
&\le
C\left(
|e_{s,h}^{n,*}|+\|e_{u,h}^{n,*}\|_h+h^2
\right),
\label{eq_ac_F_lip_star}
\\
\|\mathcal N_h^n-\mathcal N(t_n)\|_h
&\le
C\left(
|e_{s,h}^n|+\|e_{u,h}^n\|_h+h^2
\right),
\label{eq_ac_N_lip_n}
\\
\|\mathcal N_h^{n,*}-\mathcal N(t_{n+1})\|_h
&\le
C\left(
|e_{s,h}^{n,*}|+\|e_{u,h}^{n,*}\|_h+h^2
\right).
\label{eq_ac_N_lip_star}
\end{align}
\end{lemma}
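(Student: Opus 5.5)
The plan is to reduce every estimate to two elementary ingredients: the Lipschitz continuity of $f$ and $\sigma$ on compact sets, and the $O(h^2)$ quadrature consistency \eqref{eq:ac_spatial_consistency}. All numerical stages satisfy $\|u_h^n\|_\infty,\|u_h^{n,*}\|_\infty\le\beta$ by Lemma~\ref{lem:mbp-gsav-etd2}, and the exact solution obeys the same bound by hypothesis. Hence every pointwise argument of $f$ lies in $[-\beta,\beta]$, and $|f(a)-f(b)|\le \|f'\|_{C[-\beta,\beta]}|a-b|$ holds nodewise, which gives $\|f(u_h^n)-f(\mathcal I_h u(t_n))\|_h\le C\|e_{u,h}^n\|_h$. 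For the scalar arguments of $\sigma$, \eqref{eq:ac_coarse_numerical_bounds_after_s} and the MBP confine $s_h^n$, $s_h^{n,*}$ and $E_{1h}(u_h^n)$, $E_{1h}(u_h^{n,*})$ to a fixed compact interval $[-C,C]$. The exact quantities $s(t)$ and $E_1(u(t))$ are bounded by the regularity \eqref{eqa_19}. On such an interval $\sigma$ is $C^1$, Lipschitz, and bounded below by a positive constant by $(\Sigma_1)$.

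For \eqref{eq_ac_g_lip_n}, I would write $g(t_n)=\sigma(s(t_n))/\sigma(E_1(u(t_n)))$ and split the difference as
\[
g_h^n-g(t_n)=\frac{\sigma(s_h^n)-\sigma(s(t_n))}{\sigma(E_{1h}(u_h^n))}
+\sigma(s(t_n))\,\frac{\sigma(E_1(u(t_n)))-\sigma(E_{1h}(u_h^n))}{\sigma(E_{1h}(u_h^n))\,\sigma(E_1(u(t_n)))}.
\]
The first term is bounded by $C|e_{s,h}^n|$. For the second term I would use
\[
|E_{1h}(u_h^n)-E_1(u(t_n))|\le|E_{1h}(u_h^n)-E_{1h}(\mathcal I_hu(t_n))|+|E_{1h}(\mathcal I_hu(t_n))-E_1(u(t_n))|.
\]
The last piece is at most $Ch^2$ by \eqref{eq:ac_spatial_consistency}. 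The first piece is $|\langle W(u_h^n)-W(\mathcal I_hu(t_n)),1\rangle_h|\le \|W'\|_{C[-\beta,\beta]}|\Omega|^{1/2}\|e_{u,h}^n\|_h$ by the mean value theorem and Cauchy--Schwarz. The starred estimate \eqref{eq_ac_g_lip_star} follows by the identical argument, with $t_n$ replaced by $t_{n+1}$ and $(e_{u,h}^n,e_{s,h}^n)$ replaced by the predictor errors.

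For \eqref{eq_ac_F_lip_n} I would use $F_h^n-F(t_n)=(g_h^n-g(t_n))f(u_h^n)+g(t_n)\bigl(f(u_h^n)-f(u(t_n))\bigr)$. Here $\|f(u_h^n)\|_h\le C$ by the MBP, and $g(t_n)=1$ for the exact solution, or in any case $g(t_n)$ is bounded. Combining this with the bound on $g$ gives the claim, and the starred version \eqref{eq_ac_F_lip_star} is analogous. Finally, $\mathcal N_h^n-\mathcal N(t_n)=F_h^n-F(t_n)+\kappa\bar g_h^n e_{u,h}^n$, and $\bar g_h^n\le G^+$ by Lemma~\ref{lem:g_upper_ac}. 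The point to note is that the same frozen factor $\bar g_h^n$ appears in both the discrete and the continuous $\mathcal N$, so no difference of stabilization factors arises. This gives \eqref{eq_ac_N_lip_n} and \eqref{eq_ac_N_lip_star}.

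The main point of care, though not a real obstacle, is to verify that every constant is independent of $h$ and $\tau$. In particular the scalar variables must stay in a compact set, which is where Lemma~\ref{lem:ac_s_lower_bound} and \eqref{eq:ac_coarse_numerical_bounds_after_s} are needed rather than any error bootstrap. Likewise the discrete-to-continuous energy comparison should use only the $\ell_h^2$ error together with the assumed $O(h^2)$ quadrature, and never an inverse inequality.
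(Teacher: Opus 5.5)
Your proposal is correct and follows the paper's proof essentially step for step. In both, the MBP and the a priori scalar bounds put every argument of $\sigma$ in a compact interval. Lipschitz continuity of $\sigma$, $W$, $f$ together with the $O(h^2)$ quadrature consistency then yields the $g$-estimates. A product-rule split handles $F$, and the identity $\mathcal N_h^n-\mathcal N(t_n)=F_h^n-F(t_n)+\kappa\bar g_h^n e_{u,h}^n$, with the common frozen factor, handles $\mathcal N$. The only differences are cosmetic: you make the $E_{1h}$ comparison explicit via the mean value theorem and Cauchy--Schwarz, and you split $F_h^n-F(t_n)$ with the roles of $g$ and $f$ interchanged relative to the paper; neither changes the argument.
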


\begin{proof}
By the smoothness of $E_1$ and the consistency of $E_{1h}$, we have
\[
\left|
E_{1h}(u_h^n)-E_1(u(t_n))
\right|
\le
C\left(
\|e_{u,h}^n\|_h+h^2
\right).
\]
Using
\[
g_h^n=\frac{\sigma(s_h^n)}{\sigma(E_{1h}(u_h^n))},
\qquad
g(t_n)=\frac{\sigma(s(t_n))}{\sigma(E_1(u(t_n)))},
\]
the bounds \eqref{eq:bulk_energy_bounds} and
\eqref{eq:ac_coarse_numerical_bounds_after_s} place all arguments of
$\sigma$ in a fixed compact interval.  Since $\sigma\in C^1(\mathbb R)$
and is strictly positive, its denominator has a positive minimum on this
compact interval and the quotient is Lipschitz there.
The mean value theorem therefore gives \eqref{eq_ac_g_lip_n}; the proof of
\eqref{eq_ac_g_lip_star} is identical.

Since $f$ is smooth on $[-\beta,\beta]$, we have
\[
\|f(u_h^n)-f(u(t_n))\|_h
\le
C\left(
\|e_{u,h}^n\|_h+h^2
\right),
\]
and therefore
\[
\|F_h^n-F(t_n)\|_h
\le
|g_h^n-g(t_n)|\,\|f(u(t_n))\|_h
+
g_h^n\|f(u_h^n)-f(u(t_n))\|_h .
\]
This proves \eqref{eq_ac_F_lip_n}. The estimate
\eqref{eq_ac_F_lip_star} follows in the same way.

Finally, since $\bar g_h^n$ is uniformly bounded by
\eqref{eq:ac_g_two_sided},
\[
\mathcal N_h^n-\mathcal N(t_n)
=
F_h^n-F(t_n)
+
\kappa\bar g_h^n e_{u,h}^n ,
\]
which gives \eqref{eq_ac_N_lip_n}. The proof of
\eqref{eq_ac_N_lip_star} is the same.
\end{proof}
\begin{lemma}[Prediction-stage error estimate]\label{lem4_2}
Suppose that $(u(t),s(t))$ satisfies \eqref{eqa_19}.
Let $(u_h^n,s_h^n)$ be generated by the stabilized GSAV--ETD2 scheme
\eqref{eq_etd1-u}--\eqref{eq_s-correction}, with the errors defined in
\eqref{eq:ac_error_notation}.
Assume the maximum bound principle of
Lemma~\ref{lem:mbp-gsav-etd2} and the scalar bounds
\eqref{eq:ac_g_two_sided}. Then there exists a positive
constant $C$, independent of $\tau$ and $h$, such that
\begin{equation}\label{eq_ac_star_error_bound}
\|e_{u,h}^{n,*}\|_h
+
|e_{s,h}^{n,*}|
\le
C\left(
\|e_{u,h}^n\|_h
+
|e_{s,h}^n|
+
\tau(\tau+h^2)
\right).
\end{equation}
\end{lemma}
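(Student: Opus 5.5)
The plan is to compare the predictor \eqref{eq_etd1-u}--\eqref{eq_etd1-s} with a version of the exact solution written in exactly the same ETD1 form, using the same frozen operator $L_{\kappa,h}^n=\kappa g_h^n I-\varepsilon^2\Delta_h$. I would then subtract the two representations and estimate everything in $\ell_h^2$. Because $L_{\kappa,h}^n$ is symmetric positive definite, spectral calculus gives two facts that do not involve $h$:
\[
\|e^{-tL_{\kappa,h}^n}\|_{h\to h}\le 1\quad (t\ge0),
\qquad
\|\tau\varphi_1(\tau L_{\kappa,h}^n)\|_{h\to h}\le\tau .
\]
So no coupling between $\tau$ and $h$ should be needed.

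\textbf{Step 1 (exact ETD1 identity).} On the grid the exact solution satisfies
\[
\partial_t u=\varepsilon^2\Delta_h u+F(t)+\chi_h(t),
\]
since $g(t)=1$ gives $F=f(u)$. Adding and subtracting $\kappa g_h^n u$ gives
\[
\partial_t u=-L_{\kappa,h}^n u+\mathcal N_1(t)+\chi_h(t),
\qquad
\mathcal N_1(t):=F(t)+\kappa g_h^n u(t).
\]
Variation of constants on $[t_n,t_{n+1}]$ then yields
\[
u(t_{n+1})=e^{-\tau L_{\kappa,h}^n}u(t_n)+\tau\varphi_1(\tau L_{\kappa,h}^n)\mathcal N_1(t_n)+R^n ,
\]
\[
R^n:=\int_0^\tau e^{-(\tau-\theta)L_{\kappa,h}^n}\bigl[\mathcal N_1(t_n+\theta)-\mathcal N_1(t_n)+\chi_h(t_n+\theta)\bigr]\,\mathrm d\theta .
\]
By \eqref{eqa_19}, Lemma~\ref{lem:g_upper_ac} (so $g_h^n\le G^+$), and \eqref{eq:ac_spatial_consistency}, I expect $\|\partial_t\mathcal N_1\|_h\le C$, and hence $\|R^n\|_h\le C\tau(\tau+h^2)$.

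\textbf{Step 2 (field error).} Subtracting the Step 1 identity from \eqref{eq_etd1-u} gives
\[
e_{u,h}^{n,*}=e^{-\tau L_{\kappa,h}^n}e_{u,h}^n+\tau\varphi_1(\tau L_{\kappa,h}^n)\bigl(F_h^n-F(t_n)+\kappa g_h^n e_{u,h}^n\bigr)-R^n .
\]
The two operator bounds above, together with \eqref{eq_ac_F_lip_n} of Lemma~\ref{lem:ac_lipschitz} (valid by the MBP and \eqref{eq:ac_g_two_sided}), give
\[
\|e_{u,h}^{n,*}\|_h\le(1+C\tau)\|e_{u,h}^n\|_h+C\tau|e_{s,h}^n|+C\tau(\tau+h^2).
\]

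\textbf{Step 3 (scalar error).} I would write
\[
e_{s,h}^{n,*}=e_{s,h}^n-\langle F_h^n,u_h^{n,*}-u_h^n\rangle_h-\bigl(s(t_{n+1})-s(t_n)\bigr).
\]
Using $s_t=-(F,u_t)$ and Taylor expansion of $F$ about $t_n$, I expect
\[
s(t_{n+1})-s(t_n)=-\bigl\langle F(t_n),u(t_{n+1})-u(t_n)\bigr\rangle_h+O(\tau^2+\tau h^2).
\]
Both the quadrature defect and the Taylor remainder are integrated over a slab of length $\tau$, which is why a factor $\tau$ appears. The remaining difference then splits as
\[
-\langle F_h^n,e_{u,h}^{n,*}-e_{u,h}^n\rangle_h-\bigl\langle F_h^n-F(t_n),u(t_{n+1})-u(t_n)\bigr\rangle_h .
\]
The first term is bounded by $C(\|e_{u,h}^{n,*}\|_h+\|e_{u,h}^n\|_h)$, using $\|F_h^n\|_h\le C$ from the MBP. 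The second term is bounded by $C\tau(|e_{s,h}^n|+\|e_{u,h}^n\|_h+h^2)$, using \eqref{eq_ac_F_lip_n} and $\|u(t_{n+1})-u(t_n)\|_h\le C\tau$. Inserting Step 2 then gives \eqref{eq_ac_star_error_bound}.

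\textbf{Main obstacle.} The delicate point is Step 3's consistency term: the scalar defect must be $O(\tau(\tau+h^2))$ rather than merely $O(h^2)$. I would handle this by applying the assumed $O(h^2)$ quadrature consistency to the time derivative $(F,u_t)$ pointwise in time, before integrating over $[t_n,t_{n+1}]$, so that the $h^2$ error is multiplied by $\tau$.
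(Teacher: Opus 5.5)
Your proposal is correct and follows the paper's proof essentially step for step. It uses the same frozen-operator variation-of-constants identity for the exact solution (your $R^n$ is the paper's residual $\rho_p^n$), the same $\ell_h^2$ bounds on $e^{-\tau L_{\kappa,h}^n}$ and $\tau\varphi_1(\tau L_{\kappa,h}^n)$ combined with Lemma~\ref{lem:ac_lipschitz}, and the same splitting of $e_{s,h}^{n,*}$ into the $F_h^n$ term, the $F_h^n-F(t_n)$ term, and a scalar defect of size $C\tau(\tau+h^2)$. Your point about applying quadrature consistency pointwise in time before integrating over $[t_n,t_{n+1}]$ simply spells out the bound $|R_{s,h}^n|\le C\tau(\tau+h^2)$, which the paper states without detail.
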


\begin{proof}
The variation-of-constants formula for the exact solution, with
$L_{\kappa,h}^n$ frozen on $[t_n,t_{n+1}]$, gives
\begin{align}
e_{u,h}^{n,*}={}&e^{-\tau L_{\kappa,h}^n}e_{u,h}^n
+\tau\varphi_1(\tau L_{\kappa,h}^n)
\bigl(F_h^n-F(t_n)+\kappa g_h^n e_{u,h}^n\bigr)
-\rho_p^n,
\label{eq_ac_star_error_voc}
\end{align}
where, with $\chi_h$ from \eqref{eq:ac_spatial_consistency},
\begin{equation}\label{eq:ac_predictor_residual_definition}
\rho_p^n
=\int_0^\tau e^{-(\tau-r)L_{\kappa,h}^n}
\left[
F(t_n+r)-F(t_n)
+\kappa g_h^n\bigl(u(t_n+r)-u(t_n)\bigr)
+\chi_h(t_n+r)
\right]dr .
\end{equation}
Taylor's formula and \eqref{eq:ac_spatial_consistency} imply
\begin{equation}\label{eq_ac_predictor_residual}
\|\rho_p^n\|_h\le C\tau(\tau+h^2).
\end{equation}
Indeed, the time-dependent part of the integrand is $O(r)$, and hence its
integral against the contraction semigroup is $O(\tau^2)$; the
discrete-Laplacian defect is $O(\tau h^2)$.  Since $L_{\kappa,h}^n$ is
nonnegative,
\[
\|e^{-\tau L_{\kappa,h}^n}\|_{h\to h}\le1,
\qquad
\|\tau\varphi_1(\tau L_{\kappa,h}^n)\|_{h\to h}\le\tau.
\]
Combining these estimates with Lemma~\ref{lem:ac_lipschitz} yields
\begin{equation}\label{eq_ac_star_L2_bound}
\|e_{u,h}^{n,*}\|_h
\le C\bigl(\|e_{u,h}^n\|_h+\tau|e_{s,h}^n|
+\tau(\tau+h^2)\bigr).
\end{equation}

It remains to estimate $e_{s,h}^{n,*}$. From the predictor update,
\[
s_h^{n,*}
=
s_h^n
-
\left\langle
F_h^n,u_h^{n,*}-u_h^n
\right\rangle_h,
\qquad
F_h^n=g_h^nf(u_h^n).
\]
The exact auxiliary equation gives
\[
s(t_{n+1})
=
s(t_n)
-
\left\langle
F(t_n),u(t_{n+1})-u(t_n)
\right\rangle_h
+
R_{s,h}^n,
\]
where
\[
F(t_n)=g(t_n)f(u(t_n)),
\qquad
|R_{s,h}^n|\le C\tau(\tau+h^2).
\]
Subtracting the two identities, we obtain
\begin{align}
e_{s,h}^{n,*}
={}&
e_{s,h}^n
-
\left\langle
F_h^n,e_{u,h}^{n,*}-e_{u,h}^n
\right\rangle_h
\nonumber\\
&-
\left\langle
F_h^n-F(t_n),
u(t_{n+1})-u(t_n)
\right\rangle_h
-
R_{s,h}^n .
\label{eq_ac_star_es_identity}
\end{align}
By the maximum bound principle and the uniform upper bound of $g_h^n$,
\[
\|F_h^n\|_h\le C.
\]
Furthermore,
\[
\|F_h^n-F(t_n)\|_h
\le
C\left(
|e_{s,h}^n|
+
\|e_{u,h}^n\|_h
+
h^2
\right),
\]
and the regularity of the exact solution implies
\[
\|u(t_{n+1})-u(t_n)\|_h\le C\tau.
\]
Moreover, the triangle inequality and \eqref{eq_ac_star_L2_bound} show that
\[
\|e_{u,h}^{n,*}-e_{u,h}^n\|_h
\le C\bigl(\|e_{u,h}^n\|_h+\tau|e_{s,h}^n|+\tau(\tau+h^2)\bigr).
\]
Hence, using this bound in
\eqref{eq_ac_star_es_identity}, we get
\[
|e_{s,h}^{n,*}|
\le
C\left(
|e_{s,h}^n|
+
\|e_{u,h}^n\|_h
+
\tau(\tau+h^2)
\right).
\]
Combining this estimate with \eqref{eq_ac_star_L2_bound} proves
\eqref{eq_ac_star_error_bound}.
\end{proof}

\begin{lemma}[Estimate of the correction defect]\label{lem:ac_correction_defect}
Let $\mathcal D_s^{n+1}$ be defined by \eqref{eq:ac_D_definition}.
Under the assumptions of Lemma~\ref{lem4_2} and the a priori bound
\eqref{eq:ac_coarse_numerical_bounds_after_s},
\begin{equation}\label{eq:ac_D_bound}
\left|\frac{\mathcal D_s^{n+1}}{\tau}\right|^2
\le C\left(
\|e_{u,h}^n\|_{H_h^1}^2+|e_{s,h}^n|^2+(\tau^2+h^2)^2\right).
\end{equation}
\end{lemma}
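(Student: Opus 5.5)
We need $\mathcal D_s^{n+1}/\tau \le C(\|e_{u,h}^n\|_{H_h^1}+|e_{s,h}^n|+\tau^2+h^2)$. The plan is to bound the two pieces of $\mathcal D_s^{n+1}$ separately:
\[
\frac{3}{4\tau}\langle A_h^n w_h,w_h\rangle_h
\quad\text{and}\quad
\frac{4\kappa\bar g_h^n}{7\tau}\|d_h\|_h^2 .
\]
In each case we extract one factor that is $O(\tau)$ or better. The other factor is split into a consistent part, which is $O(\tau^2)$ or $O(\tau^{3/2})$ by smoothness of the exact solution, and an error part, which is controlled by Lemma~\ref{lem:ac_lipschitz} and Lemma~\ref{lem4_2}. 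Squaring at the end gives \eqref{eq:ac_D_bound}.

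\emph{The $w_h$ term.} By \eqref{eq:ac_correction_increment_coarse}, $w_h=\tau\varphi_2(\tau A_h^n)W_h^n$. Spectrally,
\[
\tau^{-1}\langle A_h^n w_h,w_h\rangle_h=\langle \tau A_h^n\varphi_2(\tau A_h^n)^2W_h^n,W_h^n\rangle_h\le C\|W_h^n\|_h^2,
\]
since $z\varphi_2(z)^2$ is bounded. Write
\[
W_h^n=\bigl(\mathcal N_h^{n,*}-\mathcal N(t_{n+1})\bigr)-\bigl(\mathcal N_h^n-\mathcal N(t_n)\bigr)+\bigl(\mathcal N(t_{n+1})-\mathcal N(t_n)\bigr).
\]
Lemmas~\ref{lem:ac_lipschitz} and \ref{lem4_2} then give
\[
\|W_h^n\|_h\le C\bigl(\|e_{u,h}^n\|_h+|e_{s,h}^n|+h^2+\tau\bigr).
\]
Squaring produces a bound with a $\tau^2$ term, which is too weak. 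So I will keep one factor more precisely. I will bound $\tau^{-1}\langle A_h^nw_h,w_h\rangle_h\le \|(\tau A_h^n)^{1/2}\varphi_2\,W_h^n\|_h\cdot\|(\tau A_h^n)^{1/2}\varphi_2\,W_h^n\|_h$. In this product:
\begin{itemize}
\item the error part of one factor is estimated crudely by $C(\|e\|+|e_s|+h^2)$;
\item the smooth part $\mathcal N(t_{n+1})-\mathcal N(t_n)=O(\tau)$ in $H_h^1$ (using $u_t\in H^4$) gives, via $\sup_z z\varphi_2(z)^2/(1+z)$-type bounds, $\|(\tau A_h^n)^{1/2}\varphi_2 v\|_h\le C\tau^{1/2}\|v\|_{H_h^1}$, hence $O(\tau^{3/2})$.
\end{itemize}
The product of the two smooth parts is $O(\tau^3)$, which is acceptable once divided appropriately. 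If this falls short, I will use the more naive fact that the squared quantity is $\le C(\|e\|^2+|e_s|^2+h^4+\tau^4)$ only after showing $\|(\tau A_h^n)^{1/2}\varphi_2(\tau A_h^n)(\mathcal N(t_{n+1})-\mathcal N(t_n))\|_h\le C\tau^{3/2}$. That yields a $\tau^3$ contribution, not $\tau^4$.

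\emph{Main obstacle.} The genuine obstacle is the order count, since a single difference $W_h^n=O(\tau)$ naively gives $\mathcal D/\tau=O(\tau^2)$ only after gaining this extra $\tau^{1/2}$ twice. I will aim for $\mathcal D/\tau\le C\tau\,\|W_h^n\|_{H_h^1}^2$ using $\tau A_h^n\le \tau(\kappa G^++\varepsilon^2(-\Delta_h))$: the $\kappa G^+$ part gives $\tau\|W\|_h^2$, and the Laplacian part gives $\tau\|\nabla_hW\|_h^2$. Then $\|W_h^n\|_{H_h^1}\le C(\|e_{u,h}^n\|_{H_h^1}+|e_{s,h}^n|+\tau+h^2)$ requires an $H_h^1$ version of the Lipschitz and predictor estimates. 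I will obtain this from the MBP, smoothness of $f$, and $\nabla_h$ commuting with $e^{-tL_{\kappa,h}^n}$. With that,
\[
\tau\|W\|_{H_h^1}^2\le C\tau\bigl(\|e\|_{H_h^1}^2+|e_s|^2\bigr)+C\tau(\tau+h^2)^2,
\]
and the last term is $O(\tau^3+\tau h^4)$, which is within $(\tau^2+h^2)$ after the square root. This is the weakest link of the plan.

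\emph{The $d_h$ term.} Subtracting the two predictor formulas gives
\[
d_h=\int_0^\tau e^{-rL_{\kappa,h}^n}\bigl(I-e^{-r\kappa(\bar g_h^n-g_h^n)}\bigr)\bigl(F_h^n+\varepsilon^2\Delta_hu_h^n\bigr)\,dr .
\]
The scalar factor is bounded by $Cr\,|\bar g_h^n-g_h^n|$. Since $\bar g_h^n-g_h^n\le|g_h^{n,*}-g_h^n|$, Lemma~\ref{lem:ac_lipschitz} and Lemma~\ref{lem4_2} bound it by
\[
C\bigl(|e_{s,h}^n|+\|e_{u,h}^n\|_h+h^2+\tau\bigr),
\]
where the $\tau$ comes from $g(t_{n+1})-g(t_n)$. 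For the vector factor, I split $F_h^n+\varepsilon^2\Delta_hu_h^n$ into its exact-solution part, which is bounded in $\ell_h^2$ by regularity, and $\varepsilon^2\Delta_he_{u,h}^n+(F_h^n-F(t_n))$. The $\Delta_he$ piece is handled with the $r^{-1/2}$ smoothing estimate from Lemma~\ref{lem3_2} applied to $\|\Delta_he\|_{-1,h}\le\|e\|_{H_h^1}$. This gives
\[
\|d_h\|_h\le C\tau^{3/2}\bigl(|e_s|+\|e\|_{H_h^1}+h^2+\tau\bigr),
\]
so that $\|d_h\|_h^2/\tau\le C\tau^2\bigl(\|e\|_{H_h^1}^2+|e_s|^2+h^4+\tau^2\bigr)$, which is comfortably within the required bound. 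Combining the two pieces and squaring proves \eqref{eq:ac_D_bound}.
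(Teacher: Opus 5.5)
Your ingredients are the paper's: the split of $\mathcal D_s^{n+1}$ into the $w_h$ and $d_h$ pieces, the spectral bound $\tau^{-1}\langle A_h^n w_h,w_h\rangle_h\le C\|W_h^n\|_h^2$ from $\sup_{z\ge0}z\varphi_2(z)^2<\infty$, and the integral representation of $d_h$ with the $r^{-1/2}$ smoothing estimate. The problem is an order-counting error in the $w_h$ part, which creates an obstacle that does not exist.

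Your first estimate already gives $\mathcal D_s^{n+1}/\tau\le C(\mathcal Y^n+\tau^2+h^4)$, and this is \emph{not} too weak. By your own opening reduction you only need $\mathcal D_s^{n+1}/\tau\le C((\mathcal Y^n)^{1/2}+\tau^2+h^2)$, so a $\tau^2$ term is exactly admissible. No extra $\tau^{1/2}$ has to be gained anywhere, contrary to your ``main obstacle'' paragraph. The only point left is that the error enters quadratically. This is settled by \eqref{eq:ac_coarse_numerical_bounds_after_s} and the regularity of the exact solution, which give $\mathcal Y^n\le C_0$ and hence $\mathcal Y^n\le C_0^{1/2}(\mathcal Y^n)^{1/2}$. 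In squared form: $(\mathcal Y^n)^2\le C_0\mathcal Y^n$ and $\tau^4+h^8\le(\tau^2+h^2)^2$ for $h\le1$. That is the paper's whole proof.

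Likewise, the crude bound $\|d_h\|_h\le C\tau^{3/2}$, which follows from $\|F_h^n+\varepsilon^2\Delta_hu_h^n\|_{-1,h}\le C$, already yields $\kappa\bar g_h^n\|d_h\|_h^2/\tau\le C\tau^2$. Your finer splitting of the $d_h$ term is correct but unnecessary. Also, $g(t)\equiv1$ along the exact solution, so the $\tau$ you attribute to $g(t_{n+1})-g(t_n)$ is actually zero.

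Because of the miscount, the route you finally commit to, $\mathcal D_s^{n+1}/\tau\le C\tau\|W_h^n\|_{H_h^1}^2$, rests on $H_h^1$ versions of Lemmas~\ref{lem:ac_lipschitz} and~\ref{lem4_2}. These are not in the paper, and you only assert them. This is the step you yourself flag as the weakest link, and as written it is unproven. It can be filled, but not merely by commuting $\nabla_h$ with the semigroup. You would need:
\begin{itemize}
\item a discrete chain-rule estimate $\|\nabla_h(f(u_h^n)-f(\mathcal I_hu(t_n)))\|_h\le C\|e_{u,h}^n\|_{H_h^1}$, using $f\in C^2$, the bound $\beta$ for both the numerical and the exact solutions, and $\nabla u\in L^\infty$;
\item in the $H_h^1$ predictor estimate, a treatment of $\nabla_h\chi_h$. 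This is not covered by \eqref{eq:ac_spatial_consistency}, so it must come either from the smoothing bound $\|\nabla_h e^{-tL_{\kappa,h}^n}v\|_h\le Ct^{-1/2}\|v\|_h$ (which produces $\tau^{1/2}h^2$) or from additional regularity.
\end{itemize}
None of this is needed. Keep the $\ell_h^2$ bound on $W_h^n$ you had at the start and conclude with the argument above.
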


\begin{proof}
Subtracting the corrector predictor $\bar u_h^{n,*}$ from
\eqref{eq_etd2-u} gives the exact identity
\begin{equation}\label{eq:ac_correction_increment_identity}
w_h
=\tau\varphi_2(\tau A_h^n)
\left(\mathcal N_h^{n,*}-\mathcal N_h^n\right).
\end{equation}
Since $\sup_{z\ge0}z\varphi_2(z)^2<\infty$, spectral calculus yields
\begin{equation}\label{eq:ac_correction_increment_energy_bound}
\frac1\tau\left\langle
A_h^n w_h,
w_h\right\rangle_h
\le C\|\mathcal N_h^{n,*}-\mathcal N_h^n\|_h^2.
\end{equation}
Insert the exact nonlinearities at $t_n$ and $t_{n+1}$ in
$\mathcal N_h^{n,*}-\mathcal N_h^n$.
Lemma~\ref{lem:ac_lipschitz}, Lemma~\ref{lem4_2}, and
$\|\mathcal N(t_{n+1})-\mathcal N(t_n)\|_h\le C\tau$ give
\begin{equation}\label{eq:ac_W_bound}
\|\mathcal N_h^{n,*}-\mathcal N_h^n\|_h^2
\le C\left(
\|e_{u,h}^n\|_{H_h^1}^2+|e_{s,h}^n|^2+\tau^2+h^4\right).
\end{equation}

It remains to control the fact that the two predictors use different
frozen factors.  The two predictor formulas can be rewritten as
\[
u_h^{n,*}-u_h^n=\int_0^\tau e^{-rL_{\kappa,h}^n}
\bigl(F_h^n+\varepsilon^2\Delta_h u_h^n\bigr)\,\mathrm dr,
\qquad
\bar u_h^{n,*}-u_h^n=\int_0^\tau e^{-rA_h^n}
\bigl(F_h^n+\varepsilon^2\Delta_h u_h^n\bigr)\,\mathrm dr.
\]
Since
$A_h^n=L_{\kappa,h}^n+\kappa(\bar g_h^n-g_h^n)I$, it follows that
\begin{equation}\label{eq:ac_d_integral}
d_h=\int_0^\tau e^{-rL_{\kappa,h}^n}
\bigl(I-e^{-r\kappa(\bar g_h^n-g_h^n)}\bigr)
\bigl(F_h^n+\varepsilon^2\Delta_h u_h^n\bigr)\,\mathrm dr.
\end{equation}
Recall $\|v_h\|_{-1,h}:=\|(I-\Delta_h)^{-1/2}v_h\|_h$.  The maximum bound
principle and \eqref{eq:ac_coarse_numerical_bounds_after_s} imply
\[
\|F_h^n+\varepsilon^2\Delta_h u_h^n\|_{-1,h}
\le C\bigl(\|F_h^n\|_h+\|u_h^n\|_{H_h^1}\bigr)\le C.
\]
Moreover, the discrete analytic semigroup
estimate and \eqref{eq:ac_g_two_sided} give
\[
\|e^{-rL_{\kappa,h}^n}v_h\|_h\le Cr^{-1/2}\|v_h\|_{-1,h},
\qquad 0<r\le1.
\]
Using
$0\le1-e^{-r\kappa(\bar g_h^n-g_h^n)}
\le r\kappa(\bar g_h^n-g_h^n)\le Cr$ in
\eqref{eq:ac_d_integral}, we obtain
\begin{equation}\label{eq:ac_predictor_mismatch_bound}
\|d_h\|_h
\le C\int_0^\tau r^{1/2}\,\mathrm dr\le C\tau^{3/2},
\qquad
\frac{\kappa\bar g_h^n}{\tau}\|d_h\|_h^2\le C\tau^2.
\end{equation}
Equations \eqref{eq:ac_correction_increment_energy_bound}--
\eqref{eq:ac_predictor_mismatch_bound}, together with
\eqref{eq:ac_W_bound}, give the intermediate estimate
\[
0\le\frac{\mathcal D_s^{n+1}}\tau
\le C\left(
\mathcal Y^n+\tau^2+h^4
\right).
\]
Squaring this estimate gives
\eqref{eq:ac_D_bound}; here the uniform bounds
\eqref{eq:ac_coarse_numerical_bounds_after_s} and the exact-solution
regularity give $\mathcal Y^n\le C_0$, so
$(\mathcal Y^n)^2\le C_0\mathcal Y^n$.
\end{proof}

The preceding residual, predictor, and correction estimates supply all
right-hand-side terms required by the fixed-energy argument below.
\begin{theorem}[Fully discrete error estimate]\label{thm_global_error}
Under the regularity assumptions in \eqref{eqa_19}, let $(u_h^n,s_h^n)$ be the
numerical solution generated by the stabilized GSAV--ETD2 scheme
\eqref{eq_etd1-u}--\eqref{eq_s-correction}.  Assume the hypotheses of
Lemma~\ref{lem:mbp-gsav-etd2}, $\|u_h^0\|_\infty\le\beta$, and
$\|u(t)\|_{L^\infty(\Omega)}\le\beta$ for $0\le t\le T$. Then there exist
$\tau_0,h_0>0$ and a positive constant $C$, independent of $\tau$ and $h$,
such that, whenever $0<\tau\le\tau_0$ and $0<h\le h_0$,
\begin{equation}\label{eq_final_total_error}
\|e_{u,h}^n\|_{H_h^1}+|e_{s,h}^n|
\le
C(\tau^2+h^2),
\qquad
0\le n\le N_T .
\end{equation}
\end{theorem}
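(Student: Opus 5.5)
The plan is to run a discrete energy argument on the coupled error equations \eqref{eq_ac_error_u}--\eqref{eq_ac_error_s}, closed by a discrete Gronwall inequality for $\mathcal Y^n$ from \eqref{eq:ac_Y_definition}. No induction hypothesis on the error is needed: the MBP (Lemma~\ref{lem:mbp-gsav-etd2}) and the a priori bounds \eqref{eq:ac_g_two_sided}, \eqref{eq:ac_coarse_numerical_bounds_after_s} already give everything required for Lemmas~\ref{lem:ac_lipschitz}--\ref{lem:ac_correction_defect}, uniformly in $\tau,h$. Since $e_{u,h}^0=0$ and $|e_{s,h}^0|\le Ch^2$ by \eqref{eq:ac_initial_data} and \eqref{eq:ac_spatial_consistency}, the initial error is $\mathcal Y^0\le Ch^4$.

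\emph{Step 1 ($H^1_h$ estimate for $e_u$).} Take the inner product of \eqref{eq_ac_error_u} with $2\tau\delta_\tau e_{u,h}^{n+1}=2(e_{u,h}^{n+1}-e_{u,h}^n)$. On the left, split $A_h^n=\kappa\bar g_h^nI-\varepsilon^2\Delta_h$ and use polarization. The diffusive part gives $\varepsilon^2(\|\nabla_h e_{u,h}^{n+1}\|_h^2-\|\nabla_h e_{u,h}^n\|_h^2)+\varepsilon^2\|\nabla_h\delta e\|_h^2$, and the reaction part gives the analogous $L^2$ telescoping with weight $\kappa\bar g_h^n$. The weight $\bar g_h^n$ changes with $n$, so I would instead move $\kappa\bar g_h^n e_{u,h}^{n+1}$ to the right-hand side and control the $L^2$ part separately; this gives $2\tau\langle\psi(\tau A_h^n)\delta_\tau e,\delta_\tau e\rangle_h\ge0$. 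For coercivity in $\|\delta_\tau e\|_h^2$ I would use, as in Lemma~\ref{lem3_2}, $2\psi(\tau A_h^n)+\tau(-\varepsilon^2\Delta_h)\succeq(2-\tau\kappa G^+)I$, so that for $\tau\le\tau_0$ we get $c\tau\|\delta_\tau e_{u,h}^{n+1}\|_h^2$ on the left. The right-hand side is bounded using $\frac12I\preceq\varphi_2/\varphi_1\preceq I$, Lemma~\ref{lem:ac_lipschitz}, Lemma~\ref{lem4_2} (which converts starred errors into level-$n$ errors plus $\tau(\tau+h^2)$), the consistency bound \eqref{eq:ac_residual_bound}, and Young's inequality, absorbing $\frac c4\tau\|\delta_\tau e\|_h^2$. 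The $L^2$ part of the norm follows from $\|e^{n+1}\|_h^2-\|e^n\|_h^2\le\tau\|e^{n+1}\|_h^2+\tau\|\delta_\tau e\|_h^2$ with a small weight. The result is
\[
\|e_{u,h}^{n+1}\|_{H_h^1}^2-\|e_{u,h}^n\|_{H_h^1}^2+c\tau\|\delta_\tau e_{u,h}^{n+1}\|_h^2\le C\tau\bigl(\mathcal Y^n+\mathcal Y^{n+1}\bigr)+C\tau(\tau^2+h^2)^2 .
\]

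\emph{Step 2 (scalar error).} Multiply \eqref{eq_ac_error_s} by $2\tau e_{s,h}^{n+1}$ and use $|e^{n+1}|^2-|e^n|^2\le 2e^{n+1}(e^{n+1}-e^n)$. The first term on the right contains $\delta_\tau e_{u,h}^{n+1}$ with the bounded coefficient $\frac12(F_h^n+F_h^{n,*})$, so it is bounded by $\frac c4\|\delta_\tau e_{u,h}^{n+1}\|_h^2+C|e_{s,h}^{n+1}|^2$; the dissipation gained in Step 1 absorbs it once the two inequalities are added. This absorption is exactly why the error must be measured in a norm that controls $\tau\|\delta_\tau e\|^2$. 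The second term is handled by Lemma~\ref{lem:ac_lipschitz} and Lemma~\ref{lem4_2}, using $\|\delta_\tau u\|_h\le C$. The residual $T_{s,h}^n$ is bounded by \eqref{eq:ac_residual_bound}. For the term $\tau^{-1}\mathcal D_s^{n+1}$, Young's inequality together with Lemma~\ref{lem:ac_correction_defect} gives $C\tau|e_{s,h}^{n+1}|^2+C\tau(\mathcal Y^n+(\tau^2+h^2)^2)$.

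\emph{Step 3 (Gronwall).} Adding the two inequalities gives $\mathcal Y^{n+1}-\mathcal Y^n\le C\tau(\mathcal Y^n+\mathcal Y^{n+1})+C\tau(\tau^2+h^2)^2$. For $\tau\le\tau_0$ with $C\tau_0\le1/2$, the discrete Gronwall inequality gives $\mathcal Y^n\le C(\tau^2+h^2)^2$, which is \eqref{eq_final_total_error}.

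I expect the main obstacle to be the defect term. Lemma~\ref{lem:ac_correction_defect} only yields $\tau^{-1}\mathcal D_s^{n+1}\lesssim\mathcal Y^n+\tau^2+h^4$ (not squared). Squaring contributes $\tau^4+h^8$, which is acceptable, but $(\mathcal Y^n)^2\le C_0\mathcal Y^n$ relies on the a priori boundedness of $\mathcal Y^n$, and this must be verified uniformly, as in that lemma. A second delicate point is the $h^2$ (rather than $h^4$) inside the consistency bound on the $\chi_h$ contributions. I would first check that the right-hand sides are genuinely $O((\tau^2+h^2)^2)$ in squared form; if they are not, I would keep $\mathcal D_s^{n+1}\ge0$ on the favorable side by pairing it with $e_{s,h}^{n+1}$ only where its sign helps.
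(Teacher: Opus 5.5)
Your proposal is correct. Steps 2--3 match the paper's proof: you test the $s$-error equation with $2\tau e_{s,h}^{n+1}$, absorb the $\delta_\tau e_{u,h}^{n+1}$ term into the dissipation from the $u$-estimate, control $\tau^{-1}\mathcal D_s^{n+1}$ by Lemma~\ref{lem:ac_correction_defect}, and finish with Gronwall.

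The difference is in Step 1.

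\emph{The paper's route.} It does not test \eqref{eq_ac_global_error_u} with $\delta_\tau e_{u,h}^{n+1}$ itself. It tests with $2B_n\delta_\tau e_{u,h}^{n+1}$, where $B_n=(A_h^n)^{-1}(I-\Delta_h)$. Since $B_nA_h^n=I-\Delta_h$, the $n$-dependent $A_h^n$-energy becomes the fixed $H_h^1$ energy, which telescopes exactly. The polarization remainder $\tau\|\delta_\tau e_{u,h}^{n+1}\|_{H_h^1}^2$ then combines with the $\psi$-term into $2B_n\psi_1(\tau A_h^n)\succeq 2b_-I$, which is coercive for every $\tau$. The price is working with $(A_h^n)^{-1}$ and needing the lower bound $G_-$ from \eqref{eq:ac_g_two_sided} to keep $b_+$ finite, since at the zero mode $b_n=1/(\kappa\bar g_h^n)$.

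\emph{Your route.} You test directly with $2(e_{u,h}^{n+1}-e_{u,h}^n)$ and move the varying stabilization term $\kappa\bar g_h^n e_{u,h}^{n+1}$ to the right as a lower-order perturbation. You recover coercivity from $2\psi(\tau A_h^n)+\tau(-\varepsilon^2\Delta_h)=2\psi_1(\tau A_h^n)-\tau\kappa\bar g_h^n I\succeq(2-\tau\kappa G^+)I$, which is the mechanism of Lemma~\ref{lem3_2}. This handles both objections in the paper's remark against the direct test (decay of $\psi$ and the $n$-dependent $A_h^n$-energy) without $B_n$. The coercivity in your Step 1 needs only $G^+$. The costs are the mild restriction $\tau\kappa G^+\le1$ and an implicit $C\tau\mathcal Y^{n+1}$ term in the recursion. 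Both are harmless, since $\tau\le\tau_0$ is needed anyway for the scalar absorption and for Gronwall. Your small-weight Young argument for the $L^2$ part gives an $\varepsilon$-weighted norm equivalent to $\|\cdot\|_{H_h^1}$, which is enough; a weighted Young inequality also gives the unweighted norm directly.

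On the obstacles you anticipated:
\begin{itemize}
\item The bound $\mathcal Y^n\le C_0$, needed for $(\mathcal Y^n)^2\le C_0\mathcal Y^n$, follows from \eqref{eq:ac_coarse_numerical_bounds_after_s} and the regularity of the exact solution, exactly as in the paper.
\item The $O(h^2)$ consistency of $\chi_h$ enters only in squared form, giving $h^4\le(\tau^2+h^2)^2$, so there is no loss there.
\item Your fallback of exploiting $\mathcal D_s^{n+1}\ge0$ would not help, because $e_{s,h}^{n+1}$ has no definite sign. It is not needed, since Lemma~\ref{lem:ac_correction_defect} already suffices.
\end{itemize}
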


\begin{proof}
The a priori bounds \eqref{eq:ac_g_two_sided} and
\eqref{eq:ac_coarse_numerical_bounds_after_s} hold for all $0\le n<N_T$ by
Theorem~\ref{them3_1} and Lemmas~\ref{lem:mbp-gsav-etd2},
\ref{lem:g_upper_ac}, \ref{lem3_2}, and
\ref{lem:ac_s_lower_bound}, in precisely that order.  No error bootstrap
is needed.
The error equation for $u$ reads
\begin{align}
\psi(\tau A_h^n)\delta_\tau e_{u,h}^{n+1}
+
A_h^n e_{u,h}^{n+1}
={}&
\left(I-
\frac{\varphi_2(\tau A_h^n)}
{\varphi_1(\tau A_h^n)}\right)
\left(\mathcal N_h^n-\mathcal N(t_n)\right)
\nonumber\\
&+
\frac{\varphi_2(\tau A_h^n)}
{\varphi_1(\tau A_h^n)}
\left(\mathcal N_h^{n,*}-\mathcal N(t_{n+1})\right)
-
\widetilde T_{u,h}^n ,
\label{eq_ac_global_error_u}
\end{align}
where
\[
\|\widetilde T_{u,h}^n\|_h\le C(\tau^2+h^2).
\]

The direct test with $\delta_\tau e_{u,h}^{n+1}$ is not coercive uniformly,
because $\psi(z)\to0$ as $z\to\infty$, and the
$A_h^n$-energy changes with $n$.  We instead use the test
operator
\[
B_n:=(A_h^n)^{-1}(I-\Delta_h).
\]
The inverse exists because \eqref{eq:ac_g_two_sided} makes $A_h^n$
uniformly positive definite.  Both factors in $B_n$ are functions of
$-\Delta_h$ and hence commute.  If
$\lambda\ge0$ is an eigenvalue of $-\Delta_h$, the corresponding
eigenvalue of $B_n$ is
\[
b_n(\lambda)=\frac{1+\lambda}{\kappa\bar g_h^n+\varepsilon^2\lambda}.
\]
By \eqref{eq:ac_g_two_sided}, there are constants $0<b_-\le b_+$,
independent of $h$, $\tau$, and $n$, such that
$b_-I\preceq B_n\preceq b_+I$.

Take the inner product of \eqref{eq_ac_global_error_u} with
$2B_n\delta_\tau e_{u,h}^{n+1}$.  The polarization identity gives
\begin{align}
2\left\langle
A_h^n e_{u,h}^{n+1},B_n\delta_\tau e_{u,h}^{n+1}
\right\rangle_h
={}&\frac1\tau\Bigl(
\|e_{u,h}^{n+1}\|_{H_h^1}^2-\|e_{u,h}^n\|_{H_h^1}^2\Bigr)
+\tau\|\delta_\tau e_{u,h}^{n+1}\|_{H_h^1}^2 .
\label{eq_ac_global_A_identity}
\end{align}
Combining the last term with the $\psi$ term yields
\begin{align*}
&2\left\langle
B_n\psi(\tau A_h^n)\delta_\tau e_{u,h}^{n+1},
\delta_\tau e_{u,h}^{n+1}\right\rangle_h
+\tau\|\delta_\tau e_{u,h}^{n+1}\|_{H_h^1}^2\\
&\quad=2\left\langle
B_n\psi_1(\tau A_h^n)\delta_\tau e_{u,h}^{n+1},
\delta_\tau e_{u,h}^{n+1}\right\rangle_h\\
&\quad\ge2b_-\|\delta_\tau e_{u,h}^{n+1}\|_h^2,
\end{align*}
where $\psi_1(z)=\psi(z)+z/2\ge1$.  Moreover,
$\frac12I\preceq
\varphi_2(\tau A_h^n)/
\varphi_1(\tau A_h^n)\preceq I$ by
Lemma~\ref{lem_phi_ineq}.  In addition, Lemmas~\ref{lem:ac_lipschitz} and
\ref{lem4_2}, together with \eqref{eq:ac_residual_bound}, imply
\[
\begin{aligned}
&\|\mathcal N_h^n-\mathcal N(t_n)\|_h
+\|\mathcal N_h^{n,*}-\mathcal N(t_{n+1})\|_h
+\|\widetilde T_{u,h}^n\|_h\\
&\qquad\le C\left(
(\mathcal Y^n)^{1/2}+\tau^2+h^2
\right).
\end{aligned}
\]
The uniform $\ell_h^2$ bounds for $B_n$ and the two interpolation operators,
followed by Cauchy--Schwarz and Young's inequalities, therefore give
\begin{equation}\label{eq_ac_u_error_recursive}
\frac{\|e_{u,h}^{n+1}\|_{H_h^1}^2-\|e_{u,h}^n\|_{H_h^1}^2}{\tau}
+c_0\|\delta_\tau e_{u,h}^{n+1}\|_h^2
\le C\left(
\|e_{u,h}^n\|_{H_h^1}^2+|e_{s,h}^n|^2+(\tau^2+h^2)^2\right),
\end{equation}
where $c_0>0$ is independent of $h$ and $\tau$.  In particular, the
energy in \eqref{eq_ac_u_error_recursive} is the fixed $H_h^1$ energy, so
no difference $A_h^n-A_h^{n-1}$ has been suppressed.

Next, we estimate the error of the auxiliary variable. From the $s$-error
equation,
\begin{align}
\delta_\tau e_{s,h}^{n+1}
={}&
-
\left\langle
\frac12(F_h^n+F_h^{n,*}),
\delta_\tau e_{u,h}^{n+1}
\right\rangle_h
\nonumber\\
&-
\left\langle
\frac12
\left[
F_h^n-F(t_n)+F_h^{n,*}-F(t_{n+1})
\right],
\delta_\tau u(t_{n+1})
\right\rangle_h
-T_{s,h}^n-\frac1\tau\mathcal D_s^{n+1},
\label{eq_ac_global_error_s}
\end{align}
This is the exact error equation of \eqref{eq_s-correction}; in particular,
the correction defect is not discarded merely because it is nonnegative.
Multiplying \eqref{eq_ac_global_error_s} by $2e_{s,h}^{n+1}$ yields
\begin{align}
\frac{1}{\tau}
\left(
|e_{s,h}^{n+1}|^2
-
|e_{s,h}^n|^2
+
|e_{s,h}^{n+1}-e_{s,h}^n|^2
\right)
=
\mathcal J_1+\mathcal J_2+\mathcal J_3 ,
\label{eq_ac_es_identity}
\end{align}
where
\begin{align*}
\mathcal J_1
&=
-
2e_{s,h}^{n+1}
\left\langle
\frac12(F_h^n+F_h^{n,*}),
\delta_\tau e_{u,h}^{n+1}
\right\rangle_h,
\\
\mathcal J_2
&=
-
2e_{s,h}^{n+1}
\left\langle
\frac12
\left[
F_h^n-F(t_n)+F_h^{n,*}-F(t_{n+1})
\right],
\delta_\tau u(t_{n+1})
\right\rangle_h,
\\
\mathcal J_3
&=
-2e_{s,h}^{n+1}\left(T_{s,h}^n+\frac{\mathcal D_s^{n+1}}\tau\right) .
\end{align*}
Using the boundedness of $F_h^n$ and $F_h^{n,*}$, we have
\begin{equation}\label{eq_ac_J1_bound}
\mathcal J_1
\le
C|e_{s,h}^{n+1}|^2
+
\frac{c_0}{2}
\|\delta_\tau e_{u,h}^{n+1}\|_h^2 .
\end{equation}
By Lemma~\ref{lem:ac_lipschitz}, Lemma~\ref{lem4_2}, and the regularity of
the exact solution,
\begin{equation}\label{eq_ac_J2_bound}
\mathcal J_2
\le
C|e_{s,h}^{n+1}|^2
+
C
\left(
\|e_{u,h}^n\|_{H_h^1}^2
+
|e_{s,h}^n|^2
+
(\tau^2+h^2)^2
\right).
\end{equation}
By \eqref{eq:ac_residual_bound} and
Lemma~\ref{lem:ac_correction_defect},
\begin{equation}\label{eq_ac_J3_bound}
\mathcal J_3
\le
C|e_{s,h}^{n+1}|^2
+
C
\left(
\|e_{u,h}^n\|_{H_h^1}^2
+
|e_{s,h}^n|^2
+
(\tau^2+h^2)^2
\right).
\end{equation}
Substituting \eqref{eq_ac_J1_bound}--\eqref{eq_ac_J3_bound} into
\eqref{eq_ac_es_identity}, multiplying by $\tau$, and taking $\tau$
sufficiently small so that $C\tau\le1/2$, we first absorb the term
$C\tau|e_{s,h}^{n+1}|^2$ into the left-hand side and obtain
\begin{align}
|e_{s,h}^{n+1}|^2-|e_{s,h}^n|^2
\le
\frac{c_0\tau}{2}
\|\delta_\tau e_{u,h}^{n+1}\|_h^2
+
C\tau
\left(
\|e_{u,h}^n\|_{H_h^1}^2
+
|e_{s,h}^n|^2
+
(\tau^2+h^2)^2
\right).
\label{eq_ac_s_error_recursive}
\end{align}

Combining \eqref{eq_ac_u_error_recursive} and
\eqref{eq_ac_s_error_recursive}, and absorbing the term
$\|\delta_\tau e_{u,h}^{n+1}\|_h^2$, gives
\begin{equation}\label{eq_ac_unified_recursive}
\mathcal Y^{n+1}
\le
(1+C\tau)\mathcal Y^n
+
C\tau(\tau^2+h^2)^2,
\end{equation}
where $\mathcal Y^n$ is defined in \eqref{eq:ac_Y_definition}.

Since $u_h^0=\mathcal I_h u(0)$ and the periodic quadrature is second-order,
\[
\mathcal Y^0=|E_{1h}(\mathcal I_h u(0))-E_1(u(0))|^2\le C h^4 .
\]
Since $n\tau\le T$, applying the discrete Gronwall inequality to
\eqref{eq_ac_unified_recursive}
yields
\[
\mathcal Y^n
\le
C(\tau^2+h^2)^2,
\qquad
0\le n\le N_T .
\]
Consequently,
\[
\|e_{u,h}^n\|_{H_h^1}+|e_{s,h}^n|
\le
C(\tau^2+h^2),
\qquad
0\le n\le N_T .
\]
This proves \eqref{eq_final_total_error}.
\end{proof}
\section{Numerical experiments}\label{section6}

In this section, we examine the accuracy and the structure-preserving
properties of the proposed GSAV--ETD2 method.  Unless otherwise stated, the
computational domain is $\Omega=(0,2\pi)^2$ with periodic boundary conditions,
and the standard double-well potential is used:
\begin{equation}\label{eq:numerical_ac_model}
 W(u)=\frac14(u^2-1)^2,
 \qquad f(u)=u-u^3,
 \qquad \beta=1.
\end{equation}
The spatial discretization is the second-order central difference method
introduced in Section~\ref{section2}.  We take $\sigma(r)=e^r$ and
$\kappa=2$, which is consistent with the sufficient MBP condition
$\kappa\ge\|f'\|_{C[-1,1]}$.  The auxiliary variable is initialized by
$s_h^0=E_{1h}(u_h^0)$.  For the diagnostics below, we distinguish the
discrete physical energy and the modified GSAV energy by
\begin{equation}\label{eq:numerical_energy_diagnostics}
 E_{h,\mathrm{phys}}^n
 :=\frac{\varepsilon^2}{2}\|\nabla_hu_h^n\|_h^2
   +E_{1h}(u_h^n),
 \qquad
 E_{h,\mathrm{mod}}^n
 :=\mathcal E_h(u_h^n,s_h^n).
\end{equation}
Theorem~\ref{them3_1} guarantees the decay of
$E_{h,\mathrm{mod}}^n$; the physical energy is also recorded to assess how
closely the auxiliary variable tracks the bulk energy.

\subsection{Temporal and spatial convergence}

We first verify the convergence rate on a smooth periodic solution over the
fixed interval $[0,1]$.  Since no closed-form solution is available, the
errors are measured by self-convergence.  More precisely, for a grid
quantity $q\in\{u,s\}$, we use
\[
 e_{q,\tau}=q_{h,\tau}(1)-q_{h,\tau/2}(1)
\]
in the temporal test, with a sufficiently fine spatial grid, and compare a
coarse-grid solution with the restriction of the next finer-grid solution in
the spatial test.  The latter uses $N=16,32,64,128$ grid points in each
coordinate direction, so that $h=2\pi/N$, together with a sufficiently small
time step.  The interface parameter is $\varepsilon=0.08$ in both tests.

\begin{figure}[htbp]
  \centering
  \includegraphics[width=0.49\textwidth]{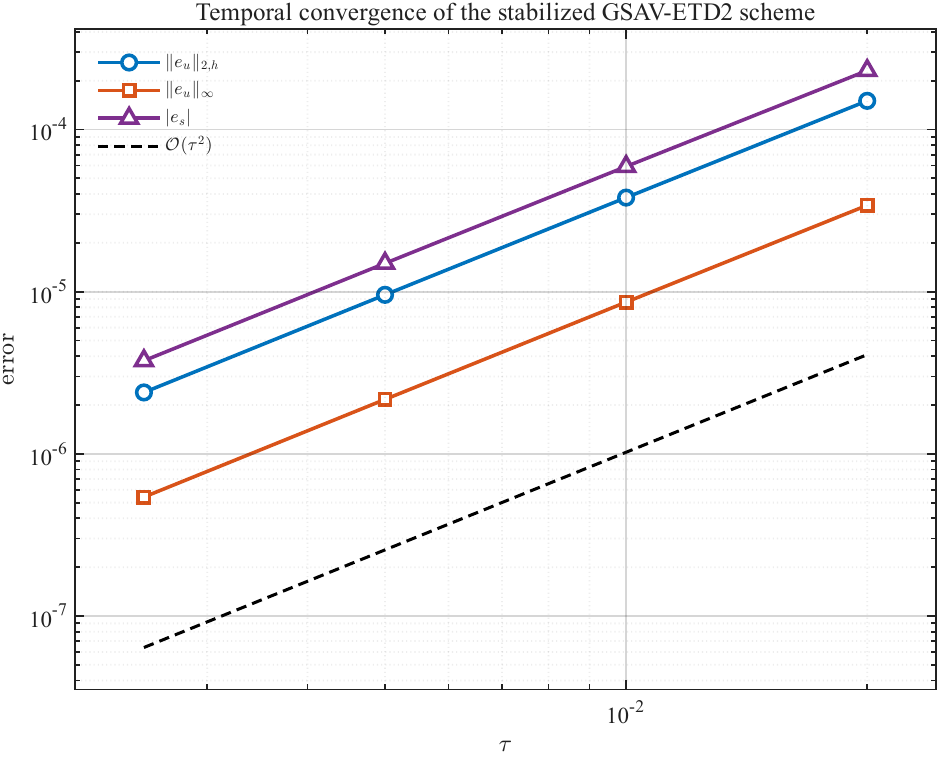}\hfill
  \includegraphics[width=0.49\textwidth]{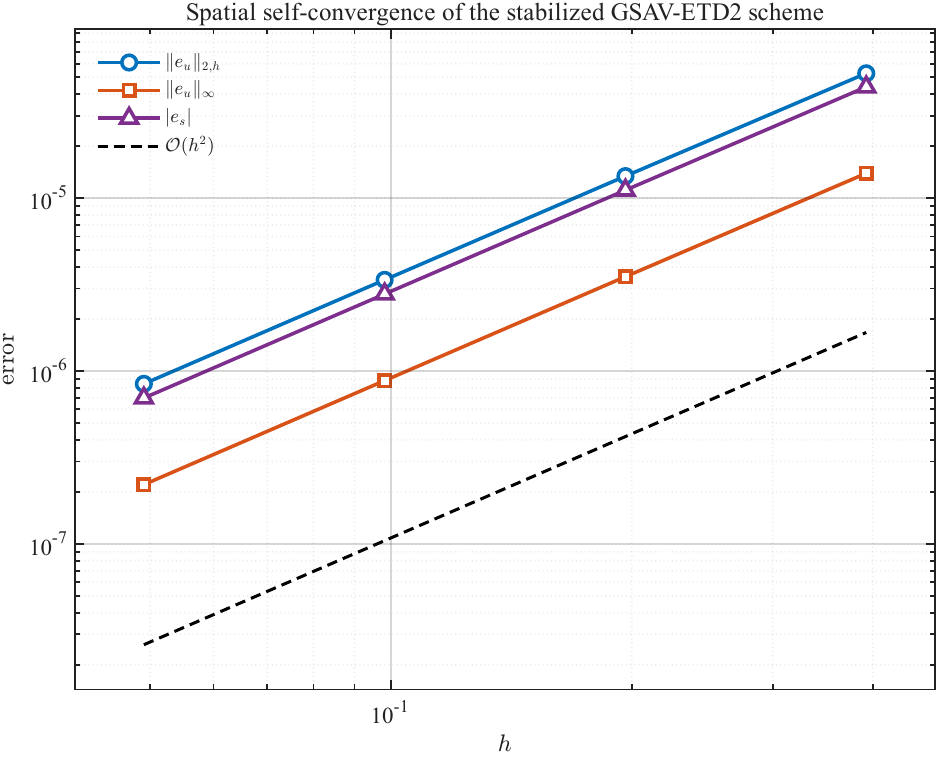}
  \caption{Self-convergence of the stabilized GSAV--ETD2 method with
  $\varepsilon=0.08$.  Left: temporal errors in
  $\|e_u\|_{2,h}$, $\|e_u\|_\infty$, and $|e_s|$.
  Right: the corresponding spatial errors.  The dashed lines indicate the
  reference slopes $O(\tau^2)$ and $O(h^2)$, respectively.}
  \label{fig:convergence_orders}
\end{figure}

As shown in Figure~\ref{fig:convergence_orders}, all three temporal error
curves are asymptotically parallel to the $O(\tau^2)$ reference line.  The
spatial errors exhibit the same behavior with respect to $h^2$.  These results
confirm the second-order accuracy in time predicted by
Theorem~\ref{thm_global_error} and the second-order consistency of the central
difference discretization.  The agreement in both the phase variable and the
auxiliary variable also indicates that the scalar correction does not reduce
the order of the primary approximation.

\subsection{Spinodal decomposition: MBP and energy dissipation}

We next consider a long-time spinodal decomposition starting from a small
random perturbation of the unstable state.  At every grid point, the initial
value is sampled independently from the uniform distribution
$\operatorname{Unif}[-0.05,0.05]$.  We use $\varepsilon=0.08$, a
$256\times256$ grid, and integrate to $T=400$.  The same realization of the
initial data is used in the structural diagnostics and in the phase plots.

\begin{figure}[htbp]
  \centering
  \includegraphics[width=\textwidth]{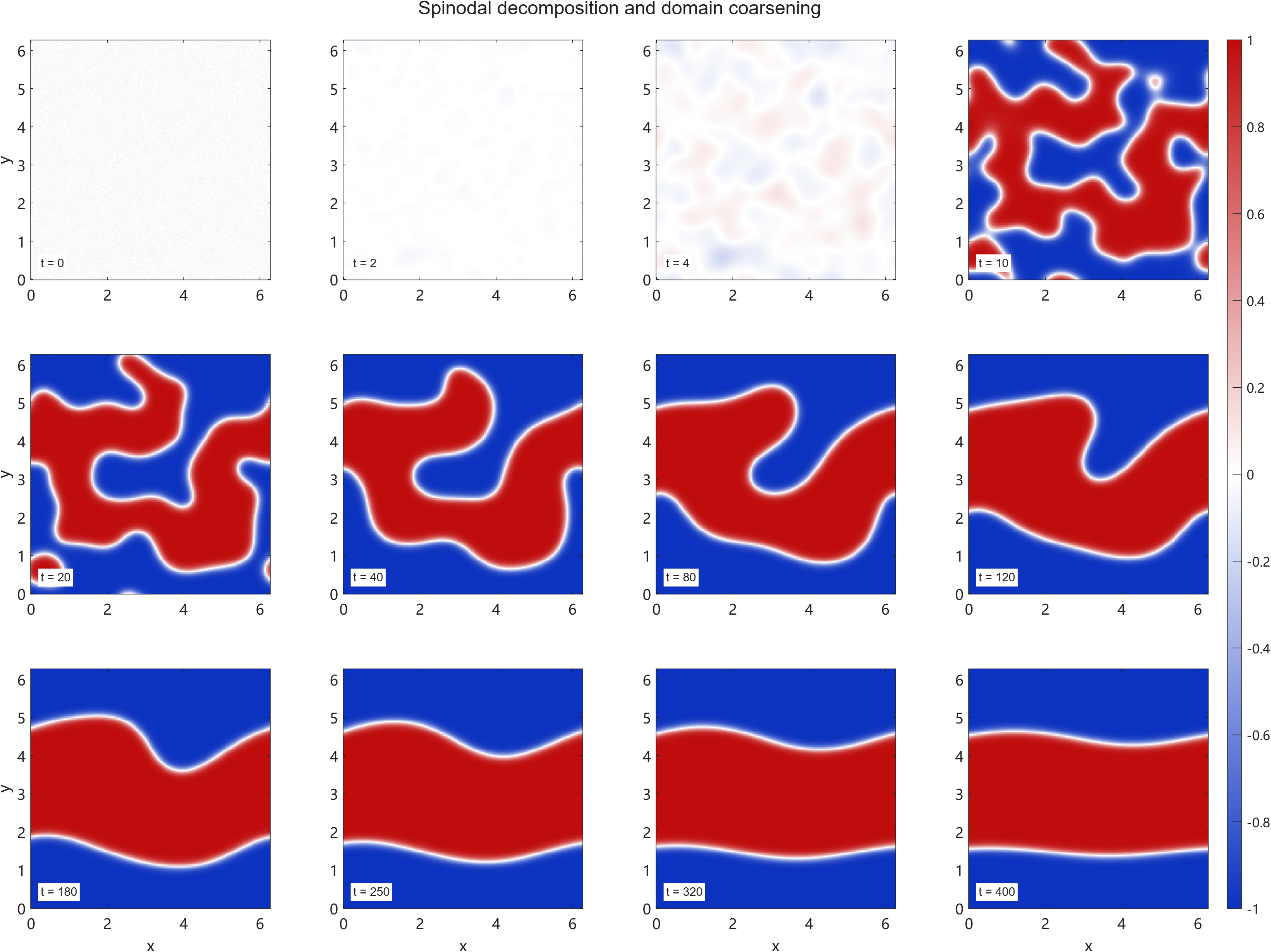}
  \caption{Snapshots of spinodal decomposition and domain coarsening at
  $t=0,2,4,10,20,40,80,120,180,250,320$, and $400$.  The color scale is fixed
  to $[-1,1]$ in all panels.}
  \label{fig:spinodal_snapshots}
\end{figure}

The snapshots in Figure~\ref{fig:spinodal_snapshots} resolve the expected
sequence of events.  The small initial fluctuation first separates rapidly
into the two stable phases $u\approx\pm1$.  Thin interconnected domains then
merge into progressively larger structures, while short interfaces disappear
and the energy continues to decrease.  At late times, the evolution is
dominated by slow domain coarsening rather than by spurious grid-scale
oscillations.

\begin{figure}[htbp]
  \centering
  \includegraphics[width=\textwidth]{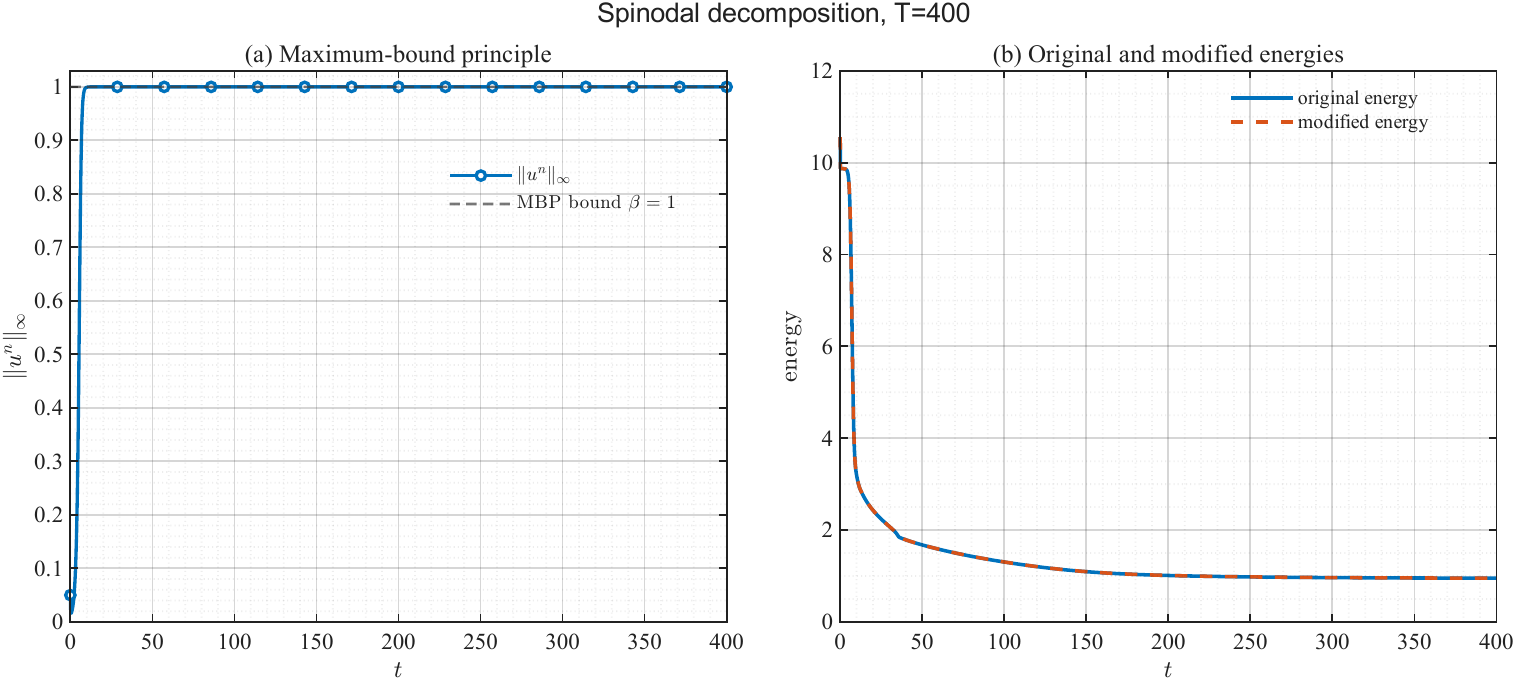}
  \caption{Spinodal decomposition with $\varepsilon=0.08$ up to $T=400$.
  Left: the discrete maximum norm and the MBP threshold $\beta=1$.
  Right: the physical energy $E_{h,\mathrm{phys}}^n$ and the modified energy
  $E_{h,\mathrm{mod}}^n$.}
  \label{fig:spinodal_structure}
\end{figure}

Figure~\ref{fig:spinodal_structure} shows that
$\|u_h^n\|_\infty\le1$ throughout the computation, in agreement with
Lemma~\ref{lem:mbp-gsav-etd2}.  Both energy curves decrease monotonically and
are visually almost indistinguishable.  Thus, in addition to the rigorous
modified-energy law, this experiment shows that the auxiliary energy remains
a faithful approximation of the physical energy over a long integration
interval.

\subsection{Mean-curvature-driven interface motion}

The next two tests examine whether the method reproduces the sharp-interface
kinematics of the Allen--Cahn equation.  Let
\[
 r(x,y)=\sqrt{(x-\pi)^2+(y-\pi)^2}.
\]
For the normalization in \eqref{eq:numerical_ac_model}, a circular interface
of radius $R(t)$ obeys, to leading order,
\begin{equation}\label{eq:curvature_radius_law}
 R(t)^2\simeq R(0)^2-2\varepsilon^2t.
\end{equation}
Equivalently, the natural comparison variable is the scaled time
$\widehat t=\varepsilon^2t$.  All simulations in this subsection use
$\varepsilon=0.08$ and a $256\times256$ grid.

\subsubsection{Shrinking circular droplet}

We initialize a diffuse circular droplet by
\begin{equation}\label{eq:circular_droplet_initial_data}
 u_h^0(x_i,y_j)
 =\tanh\!\left(\frac{R_0-r(x_i,y_j)}{\sqrt2\,\varepsilon}\right),
 \qquad R_0=1.5,
\end{equation}
and integrate to $T=220$.

\begin{figure}[htbp]
  \centering
  \includegraphics[width=\textwidth]{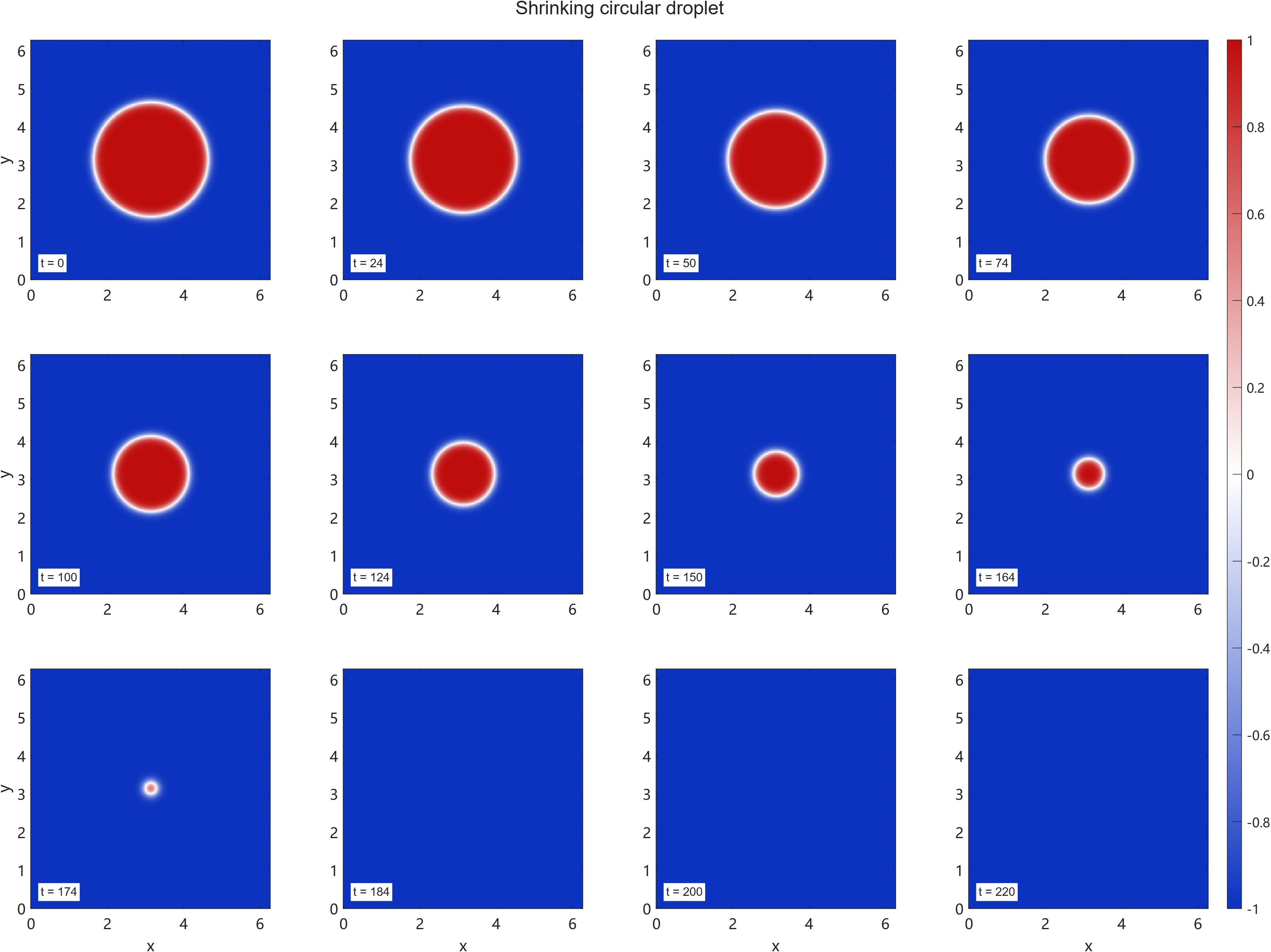}
  \caption{Phase-field snapshots of circular-droplet shrinkage at
  $t=0,24,50,74,100,124,150,164,174,184,200$, and $220$.}
  \label{fig:droplet_snapshots}
\end{figure}

\begin{figure}[htbp]
  \centering
  \includegraphics[width=\textwidth]{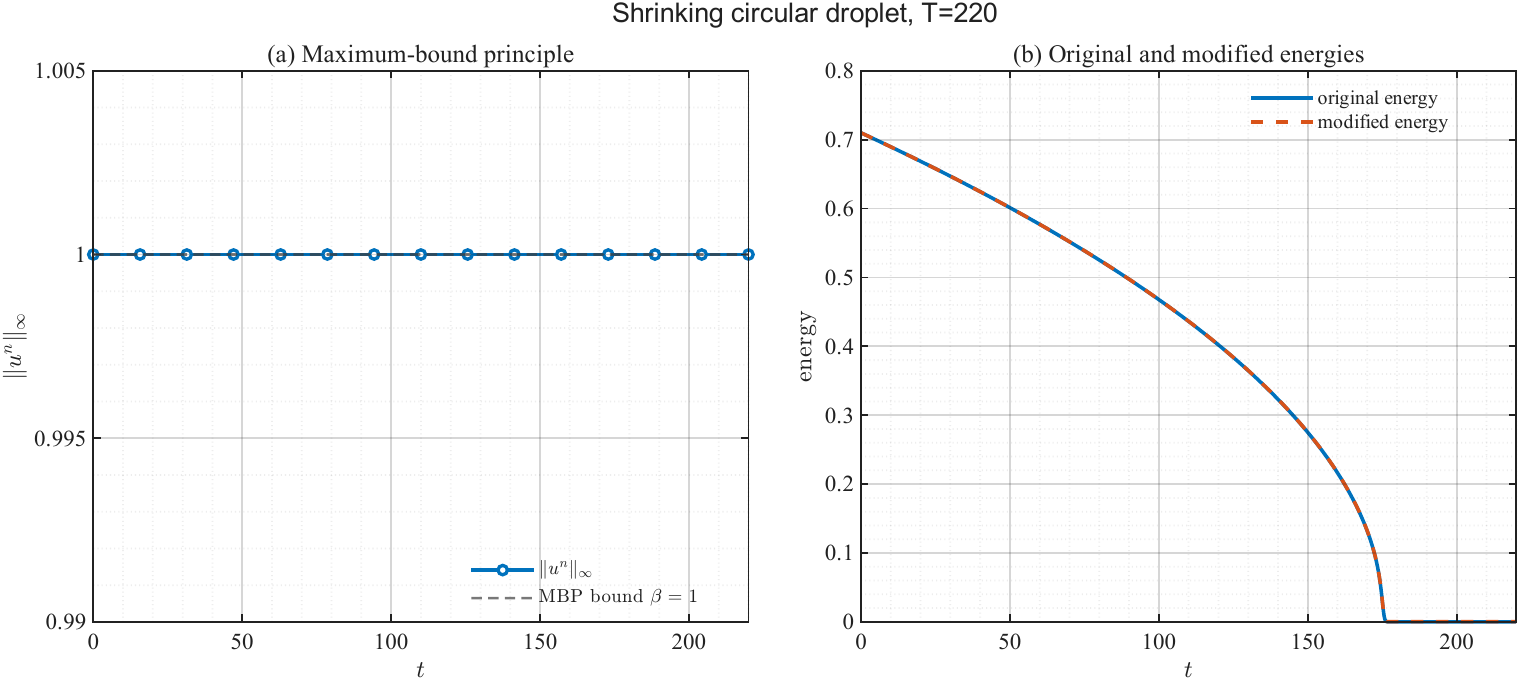}
  \caption{Shrinking circular droplet with $\varepsilon=0.08$ up to
  $T=220$.  Left: $\|u_h^n\|_\infty$ and the MBP threshold.  Right: physical
  and modified energy histories.}
  \label{fig:droplet_structure}
\end{figure}

Figures~\ref{fig:droplet_structure} and~\ref{fig:droplet_snapshots} show a
nearly circular interface throughout the evolution, followed by complete
extinction without visible overshoot.  Formula
\eqref{eq:curvature_radius_law} predicts the extinction time
\[
 T_{\rm ext}^{\rm MC}=\frac{R_0^2}{2\varepsilon^2}\approx175.78.
\]
The numerical droplet disappears between the snapshots at $t=174$ and
$t=184$, consistently with this prediction.  The two energy curves decrease
together and reach their homogeneous-state value after extinction, whereas
the maximum norm remains bounded by one.

\subsubsection{Annular-hole closure and subsequent droplet extinction}

To test a topology-changing evolution, we use the diffuse annular initial
condition
\begin{equation}\label{eq:annular_initial_data}
 u_h^0(x_i,y_j)
 =\tanh\!\left(\frac{r(x_i,y_j)-R_{\rm in}}
                         {\sqrt2\,\varepsilon}\right)
  \tanh\!\left(\frac{R_{\rm out}-r(x_i,y_j)}
                         {\sqrt2\,\varepsilon}\right),
 \qquad
 R_{\rm in}=0.75,
 \quad R_{\rm out}=1.75,
\end{equation}
and set $T=270$.

\begin{figure}[htbp]
  \centering
  \includegraphics[width=\textwidth]{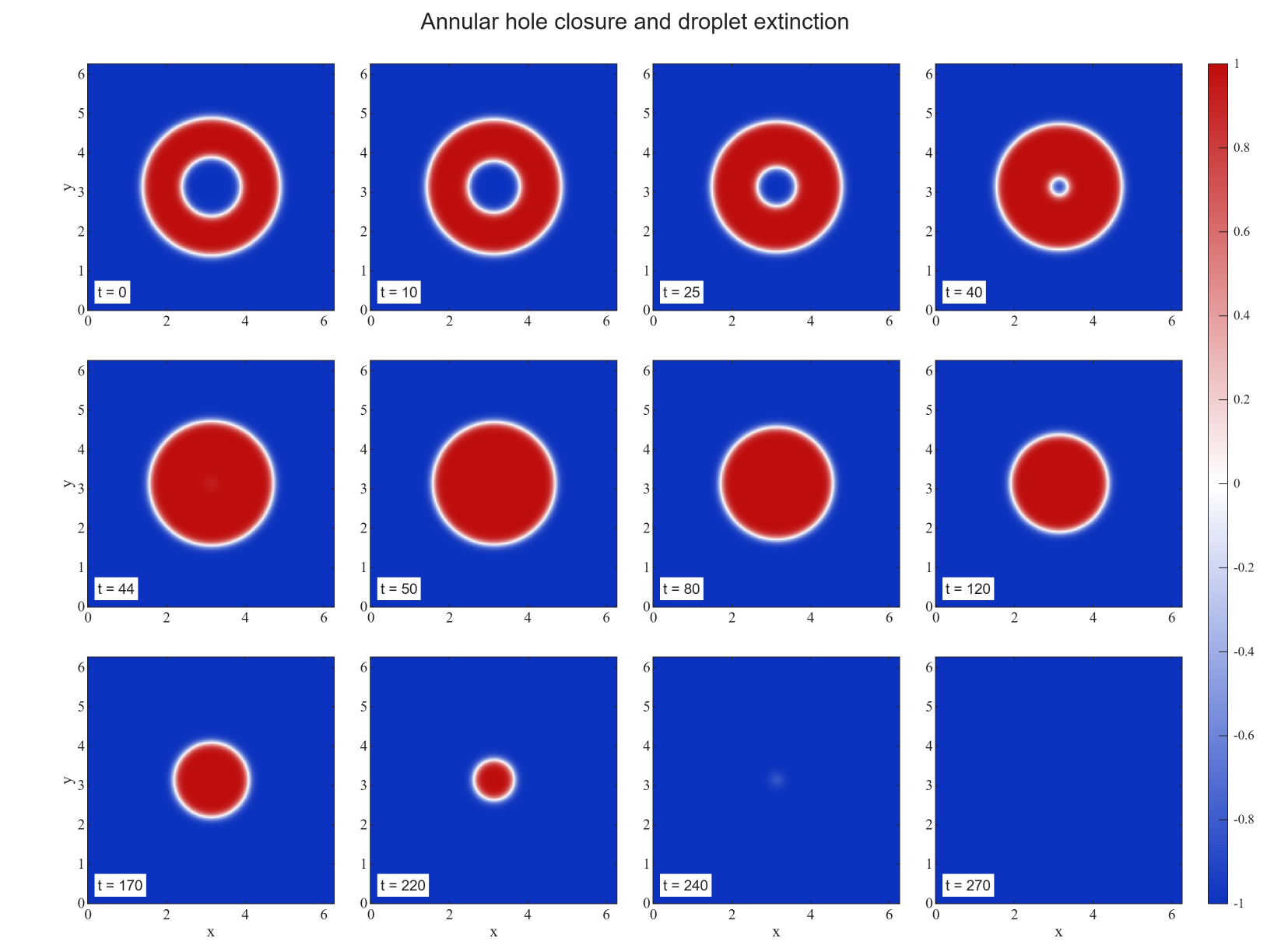}
  \caption{Phase-field snapshots of the annular test.  The hole closes near
  $t=44$, and the remaining droplet disappears near $t=240$.}
  \label{fig:annulus_snapshots}
\end{figure}

\begin{figure}[htbp]
  \centering
  \includegraphics[width=\textwidth]{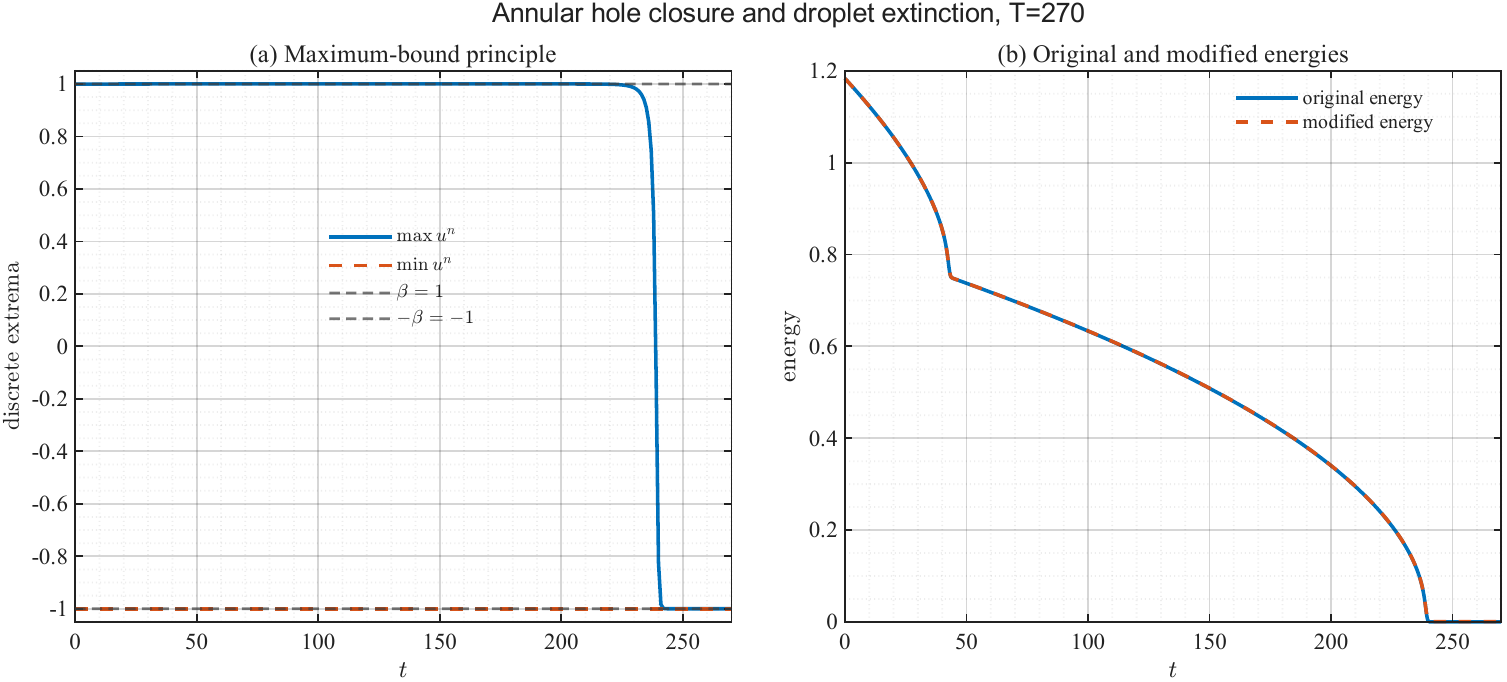}
  \caption{Annular-hole closure followed by droplet extinction with
  $\varepsilon=0.08$ up to $T=270$.  Left: the discrete maximum and minimum
  values of the phase field.  Right: physical and modified energy histories.}
  \label{fig:annulus_structure}
\end{figure}

The inner interface moves inward and closes the hole near $t=44$, after
which the remaining simply connected droplet contracts and disappears near
$t=240$; see Figure~\ref{fig:annulus_snapshots}.  The corresponding
mean-curvature estimates are
\[
 T_{\rm close}^{\rm MC}
 =\frac{R_{\rm in}^2}{2\varepsilon^2}\approx43.95,
 \qquad
 T_{\rm ext}^{\rm MC}
 =\frac{R_{\rm out}^2}{2\varepsilon^2}\approx239.26,
\]
which agree closely with the observed transition times.  The change of slope
in the energy curve around the closure event reflects the change in topology,
not a loss of dissipation.  Figure~\ref{fig:annulus_structure} also confirms
that both extrema stay in $[-1,1]$ before the solution settles to the
homogeneous phase $u=-1$.

\paragraph{Scope of the small-interface-parameter evidence.}
The convergence, spinodal-decomposition, circular-droplet, and annular tests
reported above all use $\varepsilon=0.08$.  A separate MBP/energy diagnostic
computed with $\varepsilon=0.04$ uses different initial data, spatial
resolution, and terminal time; it therefore cannot be interpreted as a
controlled comparison in $\varepsilon$.  A genuine $\varepsilon\to0$ study
should keep the annular initial geometry, $\kappa$, and the GSAV parameters
fixed, use
\[
 (\varepsilon,N)=(0.12,192),\ (0.08,256),\ (0.06,384),\ (0.04,512),
\]
so that the diffuse interface remains resolved at each value, and compare the
solutions at common scaled times $\widehat t=\varepsilon^2t$.  Such a study
should report both the phase fields and the errors in the closure/extinction
times, together with the MBP and energy diagnostics.  Accordingly, the
present results verify accuracy and structure preservation at fixed
$\varepsilon$ and their agreement with the mean-curvature law, but do not by
themselves constitute a numerical sharp-interface convergence study.

\section{Conclusion}\label{section7}
We have developed a stabilized GSAV--ETD2 method for Allen--Cahn-type gradient
flows.  The stagewise construction combines exact treatment of the linear
dissipative operator with an auxiliary-variable correction tailored to the
second-order ETD update.  The resulting method is linear, satisfies an
unconditional modified-energy dissipation law, and preserves the maximum
bound principle under an explicit condition on the stabilization parameter.
These estimates also provide the uniform bounds needed for the optimal fully
discrete error analysis.

The numerical results confirm second-order convergence in both time and
space, preservation of the invariant interval, and monotone decay of the
modified energy.  Long-time spinodal decomposition is captured without
spurious oscillations, while the circular and annular tests reproduce the
mean-curvature predictions for droplet extinction and hole closure.  A fully
controlled comparison over decreasing values of $\varepsilon$, with the
interface resolution and scaled observation times held comparable, is a
natural next step toward a quantitative sharp-interface convergence study.
\vskip 0.2cm
    {\bf Acknowledgements.} G. Ji is partially supported by the National Natural Science Foundation of China (Grant No. 12471363).

\bibliographystyle{siam}
\bibliography{S0362546X14002934}

\end{document}